\newtheorem{theorem}{Theorem}[section]
\newtheorem{corollary}[theorem]{Corollary}
\newtheorem{lemma}[theorem]{Lemma}
\newtheorem{proposition}[theorem]{Proposition}
\newtheorem{definition}[theorem]{Definition}
\theoremstyle{remark}
\newtheorem{remark}[theorem]{Remark}
\newtheorem{example}[theorem]{Example}
\numberwithin{equation}{section}
\newcommand{\N}{\mathbb{N}}
\newcommand{\ZZ}{\mathcal{Z}}
\newcommand{\R}{\mathbb{R}}
\newcommand{\C}{\mathbb{C}}
\newcommand{\Text}{\mathrm{T}}
\newcommand{\QT}{\mathrm{QT}}
\newcommand{\Tb}{\mathrm{T}_1}
\newcommand{\Cu}{\mathrm{Cu}}
\newcommand{\sa}{\mathrm{sa}}
\newcommand{\Tr}{\mathrm{Tr}}
\newcommand{\her}{\mathrm{her}}
\newcommand{\Cstar}{\rm C\sp*}
\renewcommand{\epsilon}{\varepsilon}
\renewcommand{\leq}{\leqslant}
\renewcommand{\geq}{\geqslant}
\title{Sums of commutators in pure $\Cstar$-algebras}
\author{Ping Wong Ng}
\address{\hskip-\parindent
Department of Mathematics,
University of Louisiana at Lafayette,
Lafayette, USA.}
\email{png@louisiana.edu}
\author{Leonel Robert}
\address{\hskip-\parindent
Department of Mathematics,
University of Louisiana at Lafayette,
Lafayette, USA.}
\email{lrobert@louisiana.edu}
\subjclass[2010]{46L05, 46L35}
\begin{document}
\maketitle

\begin{abstract}
In a pure  $\Cstar$-algebra (i.e., one having suitable regularity properties in its Cuntz semigroup), any element on which all bounded traces vanish  is a sum of 7 commutators. 
\end{abstract}	

\section{Introduction} 
This paper is concerned with the problem of representing trace zero elements in a $\Cstar$-algebra as sums of commutators. This problem has a long history, going back to the result by Shoda that a matrix of zero trace is expressible as a single commutator (i.e., has the form $xy-yx$). In a general $\Cstar$-algebra, 
one can deduce from the Hahn-Banach theorem that the elements that vanish on every bounded trace belong to the norm closure of the linear span of the commutators. One can even arrange, by a result of Cuntz and Pedersen \cite{cuntz-pedersen},  for a series of commutators converging in norm to any given trace zero element. A problem that has occupied numerous authors (\cite{fack,marcoux06,thomsen,pop, ng1,robert-commutators}) is that of turning this infinite sum of commutators into a finite one.  Examples in \cite{pedersen-petersen} and more recently \cite{robert-commutators} show that this is not always possible; not even for simple nuclear $\Cstar$-algebras with a unique tracial state. Marcoux \cite{marcoux06}, continuing work of Fack \cite{fack} and Thomsen \cite{thomsen}, was the first to show that $\Cstar$-algebraic regularity properties, such as Blackadars's strict comparison of projections, could be used to obtain a positive answer. This idea has proven fruitful, and the present paper extends further the work in this direction. We prove our results in the setting of pure $\Cstar$-algebras; i.e., $\Cstar$-algebras
whose Cuntz semigroups   have certain algebraic regularity properties. The class of pure $\Cstar$-algebras   includes  all  $\mathcal Z$-stable  $\Cstar$-algebras (i.e., those  tensorially absorbing  the Jiang-Su algebra) and tensorially prime examples such as the reduced $\Cstar$-algebra of the free group in infinitely many generators.
%
%

Let us fix some notation: 
Let $A$ be a $\Cstar$-algebra. By a commutator in $A$ we understand an element of the form $xy-yx$; we denote it by $[x,y]$.  We denote by $[A,A]$ the linear span of the commutators of $A$. 

Following Winter \cite{winter}, we say that $A$ is pure
if its Cuntz semigroup has the properties of almost divisibility and almost unperforation.  The latter property is equivalent to the strict comparison of positive elements by lower semicontinuous 2-quasitraces (see Section \ref{strictcomparison}). 
We prove the following theorem:
\begin{theorem}\label{mainpure}
Let $A$ be a pure $\Cstar$-algebra  whose lower semicontinuous $[0,\infty]$-valued   2-quasitraces are traces. Then for every  $h\in \overline{[A,A]}$
	we have $h=\sum_{i=1}^7[x_i,y_i]$, with $x_i,y_i\in A$ such that $\|x_i\|\cdot \|y_i\|\leq C\|h\|$ for all $i$, and where $C$ is a universal constant.
\end{theorem}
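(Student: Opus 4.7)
The overall strategy decomposes the proof into three ingredients: a closedness reduction, a reduction to differences of trace-matched orthogonal positive elements, and a key construction realizing such differences as very few commutators via the Cuntz semigroup regularity of pure $\Cstar$-algebras. I would first note, by a standard geometric iteration, that proving the bounded representation for $h \in [A,A]$ suffices to establish it on $\overline{[A,A]}$ (with a slightly worse constant): approximate $h$ to within $\|h\|/2$ by an element of $[A,A]$, apply the bounded statement to the approximand, and iterate. Hence I may assume from the outset that all bounded traces, and by the hypothesis all lsc $[0,\infty]$-valued $2$-quasitraces, vanish on $h$.

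Next I reduce to differences of trace-matched orthogonal positives. Writing $h = h_1 + ih_2$ with $h_j = h_j^*$, each $h_j$ still lies in $\overline{[A,A]}$, and the decomposition $h_j = a_j - b_j$ into positive and negative parts produces orthogonal $a_j, b_j \in A_+$ with $\tau(a_j) = \tau(b_j)$ for every lsc trace $\tau$. Since $i[x,y]=[ix,y]$, the commutator cost for $h$ is at most the combined cost for $h_1$ and $h_2$. So the key lemma to prove is: if $a, b \in A_+$ are orthogonal and $\tau(a) = \tau(b)$ for all lsc traces, then $a - b$ is a sum of a fixed number of commutators $[x_i, y_i]$ with $\|x_i\| \cdot \|y_i\|$ controlled by $\max(\|a\|,\|b\|)$.

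The heart of the proof is this key lemma. Using almost divisibility of $\Cu(A)$, I cut $a$ and $b$ into orthogonal hereditary pieces $a = \sum_n a_n$, $b = \sum_n b_n$ of geometrically decreasing size. Almost unperforation (strict comparison by lsc $2$-quasitraces) combined with the matched trace values pairs the pieces so that $a_n$ and $b_n$ become Cuntz equivalent. For each $n$, Cuntz equivalence yields an element $s_n \in A$ with $s_n s_n^* \approx a_n$ and $s_n^* s_n \approx b_n$; with the pieces orthogonal, the $s_n$ can be arranged with mutually orthogonal supports, so that $s := \sum_n s_n$ converges in norm and satisfies $[s, s^*] = a - b$, ideally as a single commutator.

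The central technical difficulty is the gap between abstract Cuntz equivalence and an exact partial-isometry implementation: generally one obtains only $(a_n - \epsilon)_+ \sim s_n s_n^*$ rather than $a_n = s_n s_n^*$ on the nose. Upgrading the approximation to an exact commutator identity, while retaining the norm bound $\|s\|^2 \leq C \max(\|a\|,\|b\|)$, requires designing the spectral cuts so that the $\epsilon$-errors telescope across the pieces and can be absorbed into a fixed, small number of extra commutators. It is this telescoping, together with the self-adjoint split and possible rotation between the orthogonal components, that should produce the final count of $7$; achieving the uniform norm control on $s$, rather than merely a Cuntz-semigroup statement, is where both almost divisibility and almost unperforation are essential.
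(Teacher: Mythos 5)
Your approach has a genuine gap at the heart of the key lemma, arising from conflating two different trace-type invariants. You reduce to $a - b$ with $a, b \in A_+$ orthogonal and $\tau(a) = \tau(b)$ for all lsc traces, then propose to cut $a$ and $b$ into pieces and use almost unperforation to pair them into Cuntz-equivalent orthogonal summands. But strict comparison (almost unperforation) governs Cuntz subequivalence via the dimension functions $d_\tau$, not the values $\tau(a)$; the matched-trace condition $\tau(a)=\tau(b)$ gives no control on $d_\tau(a)$ versus $d_\tau(b)$, since $\tau(a) = \int_0^{\|a\|} d_\tau((a-t)_+)\,dt$ is a weighted spectral quantity. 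Already in the Jiang--Su algebra, $a = \tfrac12 \cdot 1$ and a positive $b$ with $d_\tau(b) = \tfrac12$, $\tau(b) = \tfrac12$ have matched trace but incomparable dimension functions. So the proposed pairing of $a_n$ with $b_n$ as Cuntz-equivalent pieces cannot be carried out from the given hypotheses, and the single-commutator structure $[s, s^*] = a - b$ has no footing. The opening reduction also does not do what you want: iterating an approximation-then-represent step produces an infinite telescoping sum of commutators rather than seven; with no mechanism (yet) to compress the tail into orthogonal hereditary subalgebras, this does not terminate. You flag the separate issue that Cuntz equivalence produces only approximate partial isometries, and propose to absorb the errors into ``a few extra commutators,'' but you give no mechanism for that, and the norm control you'd need is precisely what is being claimed.

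The paper takes a different and more indirect route that sidesteps all three difficulties. It never tries to realize $a - b$ directly; instead it proves a uniform ``halving'' statement (Corollary~\ref{halfnorm}) --- any $h \sim_{\Tr} 0$ in an algebra with strict comparison by traces is within $\|h\|/2$ of a sum of $N$ commutators with controlled norms, for a \emph{universal} $N$ --- and this is obtained by an ultraproduct/compactness argument (Theorem~\ref{product}), which has no analogue in your sketch. This halving is then fed into Fack's telescoping technique (Theorem~\ref{fackstechnique}) along a sequence of pairwise orthogonal positives supplied by almost divisibility (Lemma~\ref{geomseries}), where the relations used are the exact Blackadar subequivalence $e_j \precsim e_{j+1}^{\oplus L}$ (i.e., $e_j = x^*x$, $xx^* \in \her(e_{j+1}^{\oplus L})$), which give exact commutator identities rather than approximate ones; a matrix-amplification reduction (Theorem~\ref{matrixreduction}) then brings the count down to~$7$. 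Your orthogonal-support intuition is aimed in the right general direction (it is the same idea that makes Fack's technique work), but without the ultraproduct halving lemma and the switch from Cuntz equivalence to the factorization relation $\precsim$, the argument does not go through.
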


A significant departure in this theorem from past results is that   the existence of a unit in the $\Cstar$-algebra is not assumed.  This brings new technical difficulties that can nevertheless be overcome.  The assumption of simplicity for the $\Cstar$-algebra, typically present in previous results on this question, has also been dropped.

Part of the motivation for this paper has been to investigate pure $\Cstar$-algebras for their own sake. Indeed, toward the proof of Theorem \ref{mainpure}, we establish a number of results on pure $\Cstar$-algebras of intrinsic interest. Pure $\Cstar$-algebras arise naturally in  the classification program for simple nuclear $\Cstar$-algebras and in various $\Cstar$-algebra constructions. For example, the tensor product of any  $\Cstar$-algebra with the Jiang-Su algebra $\mathcal Z$  is  pure. However,  while it is reasonable to  expect that many naturally occurring simple $\Cstar$-algebras are pure,  there is no evidence that $\mathcal Z$-stability  is a prevalent property beyond the realm of nuclear $\Cstar$-algebras.  
Theorem \ref{mainpure} applies to infinite  reduced free products which can be both nonexact and tensorially prime (see Example \ref{freeproducts}).

On the way to proving Theorem \ref{mainpure}, we investigate traces of products and ultraproducts of $\Cstar$-algebras; a topic also of independent interest. Traces of ultraproducts  show up in the recent  work on the Toms-Winter
conjectures: \cite{kirchberg-rordam, toms, etal}.  Given a  $\Cstar$-algebra $A$ let us denote by $\Tb(A)$ the traces  on $A$ of norm at most one (endowed with the weak$^*$-topology).
We prove the following theorem:

\begin{theorem}\label{mainproducts}
Let $A_1,A_2,\dots$ be $\Cstar$-algebras with strict comparison of positive elements by traces. The following are true:
\begin{enumerate}[(i)]
\item
The convex hull of the sets $\Tb(A_1),\Tb(A_2),\dots$ is weak* dense in $\Tb(\prod_{n=1}^\infty A_n)$. 

\item
For any free ultrafilter $U$ in $\N$ we have $\Tb(\prod_U A_i)=\prod_U \Tb(A_i)$.  
\end{enumerate}
Both (i) and (ii) also hold  if we instead assume that the $\Cstar$-algebras $A_1,A_2,\dots$ all have strict comparison of full positive elements by bounded traces and that their primitive spectra are compact.
\end{theorem}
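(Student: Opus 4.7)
For part (i) I would argue by a Hahn--Banach separation. Let $C := \overline{\mathrm{conv}(\bigcup_n \pi_n^{*}\Tb(A_n))}^{w^{*}}$, which is weak-$*$ closed, convex, and contains $0$ (since $0 \in \Tb(A_n)$ for each $n$). If some $\tau \in \Tb(\prod_n A_n)$ lay outside $C$, weak-$*$ separation would produce a self-adjoint $x = (x_n)_n \in \prod_n A_n$ with $\tau(x) > \sup_{\sigma \in C}\sigma(x) = \max\bigl(0,\ \sup_n \sup_{\sigma_n \in \Tb(A_n)} \sigma_n(x_n)\bigr)$. After subtracting a scalar multiple of an approximate unit I may assume $\sigma_n(x_n) \leq 0$ for every $n$ and every $\sigma_n \in \Tb(A_n)$, while still $\tau(x) > 0$.

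The crux --- which I expect to be the main obstacle --- is to convert the pointwise trace inequalities $\sigma_n(x_n)\leq 0$, via the assumed strict comparison of positive elements by traces in each $A_n$, into a structural decomposition $x_n = y_n - z_n + r_n$ with $y_n \in \overline{[A_n,A_n]}$, $z_n \in A_n^+$, $\|r_n\|$ arbitrarily small, and with $\|y_n\|,\|z_n\|$ bounded by a universal multiple of $\|x_n\|$. This is a Cuntz--Pedersen/Haagerup-type assertion that I expect to follow from the tracial and Cuntz-semigroup machinery developed earlier in the paper. The uniformity of the norm bound is essential: it allows the per-$n$ decompositions to be assembled into $x = y - z + r$ in $\prod_n A_n$ with $y \in \overline{[\prod_n A_n, \prod_n A_n]}$, $z \geq 0$, and $\|r\|$ arbitrarily small. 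Then $\tau(x) \leq \|r\|$, contradicting $\tau(x) > 0$.

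For part (ii), the inclusion $\prod_U\Tb(A_n) \subseteq \Tb(\prod_U A_n)$ is immediate via $\tau((a_n)_U) := \lim_U \tau_n(a_n)$. For the reverse, I argue again by Hahn--Banach: if $\tau \in \Tb(\prod_U A_n)$ lay outside the weak-$*$ closure of $\prod_U\Tb(A_n)$, a self-adjoint $a = (a_n)_U$ would give $\tau(a) > \sup_{\rho \in \prod_U\Tb(A_n)} \rho(a) = \lim_U \sup_{\rho_n \in \Tb(A_n)} \rho_n(a_n)$ (the equality follows by choosing near-maximizers coordinatewise). Lift $\tau$ to $\widetilde\tau \in \Tb(\prod_n A_n)$ vanishing on the ultrafilter ideal $J_U$. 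For any $S \in U$, the truncation $a_S$ of $a$ to $S$ satisfies $\widetilde\tau(a_S) = \widetilde\tau(a) = \tau(a)$ since $a - a_S \in J_U$; applying (i) to $a_S$ produces a convex combination $\sum_i \lambda_i \rho_{n_i} \circ \pi_{n_i}$ with indices $n_i \in S$ whose value is within $\epsilon/2$ of $\tau(a)$. Pigeonhole then picks an $n \in S$ with $\sup_{\rho_n \in \Tb(A_n)} \rho_n(a_n) > \tau(a) - \epsilon/2$; hence $\{n : \sup_{\rho_n}\rho_n(a_n) > \tau(a) - \epsilon/2\}$ meets every element of $U$ and so lies in $U$, contradicting the separation. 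Finally, I would verify that $\prod_U\Tb(A_n)$ is already weak-$*$ closed, via countable saturation of the ultraproduct $\prod_U A_n$, to upgrade the closure statement to the claimed equality.

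The variant hypothesis (strict comparison of full positive elements together with compactness of $\mathrm{Prim}(A_n)$) fits the same framework: compactness of the primitive spectrum supplies strictly positive elements that are full, so the Cuntz--Pedersen-type decomposition in step (i) goes through using only full positive elements, and (ii) follows by the same separation argument.
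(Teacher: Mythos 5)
Your overall scaffolding mirrors what the paper does: Theorem~\ref{mainproducts} is proven by imitating Ozawa's Hahn--Banach separation argument for \cite[Theorem~8]{ozawa}, with Ozawa's Theorem~6 replaced by a uniform commutator estimate (the paper uses Remark~\ref{easierthanozawa}, realized via Corollary~\ref{halfnorm} or Corollary~\ref{halfnormbounded}). You correctly identify that the burden of the proof lies in converting the per-coordinate trace inequalities $\sigma_n(x_n)\leq 0$ into something globally usable, and you correctly reduce (ii) to (i) plus countable saturation.

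The gap is in the assembly step. You claim that a decomposition $x_n=y_n-z_n+r_n$ with $y_n\in\overline{[A_n,A_n]}$, $z_n\in A_n^+$, $\|r_n\|$ small, and a \emph{uniform norm bound} on $y_n$ and $z_n$, lets you pass to $y=(y_n)_n\in\overline{[\prod_n A_n,\,\prod_n A_n]}$. This inference is false in general: having each $y_n$ in the closed commutator subspace with uniformly bounded norms puts $(y_n)_n$ in $\prod_n\overline{[A_n,A_n]}$, not in $\overline{[\prod_n A_n,\,\prod_n A_n]}$, and the inclusion of the former in the latter is precisely the nontrivial content of the paper's Theorem~\ref{product} (which is then used to extract Corollary~\ref{halfnorm} by a compactness/contradiction argument). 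What one actually needs for the assembly is a \emph{uniform commutator bound}: a universal $N$ and $C$ such that each $y_n$ is within $\tfrac12\|y_n\|$ of a sum of $N$ commutators whose entries have norms controlled by $C\|y_n\|$; then and only then can the coordinatewise approximants be glued into commutators of $\prod_n A_n$. You flag this as ``a Cuntz--Pedersen/Haagerup-type assertion'' expected to follow from earlier machinery, but in the paper it is not a direct structural decomposition of each $x_n$---it is obtained indirectly via Theorem~\ref{product} (showing that a trace on $\prod_n A_n$ vanishing coordinatewise on $h$ must vanish on $h$), whose proof requires constructing auxiliary positive elements $a_n,b_n$ that agree on $\Text(A_n)$, proving that $\prod_n A_n$ has strict comparison by the closure $K$ of $\bigcup_n\Text(A_n)$, and invoking Lemma~\ref{equalontraces}. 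So the step you label as the ``crux'' is indeed where the work is, and the norm-bound formulation you propose is too weak to make the gluing go through; you would need to either strengthen it to the uniform commutator estimate or supply an argument like Theorem~\ref{product} directly.
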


A special case of the theorem above is \cite[Theorem 8]{ozawa}, where the $\Cstar$-algebras are  unital, $\mathcal Z$-stable, and  exact.
Here, $\ZZ$-stability and exactness have been replaced by strict comparison by traces (which we show implies that  ``2-quasitraces are traces").

Here is a brief overview of the paper: 
In Section \ref{commutatorbounds} we introduce the notion of ``commutator bounds" for a $\Cstar$-algebra and discuss its basic properties. We then go over a number of techniques, particularly  a method originally due to Fack, for  proving that a $\Cstar$-algebra  has finite commutator bounds. We take special care to adapt these techniques to the non-unital case.  In Section \ref{strictcomparison} we investigate the property of strict comparison by traces and some variations on it. We show that strict comparison by lower semicontinuous traces implies that (lower semicontinuos) 2-quasitraces are traces. In Section \ref{proofofmain} we prove Theorems \ref{mainpure} and Theorem \ref{mainproducts} stated above. In Section \ref{nilpotents} we use nilpotents of order 2, rather than commutators, to represent trace zero elements of a pure $\Cstar$-algebra. In the last section of the paper we look at multiplicative commutators of unitaries and the kernel of the de la Harpe-Skandalis determinant in a pure $\Cstar$-algebra.


\section{Commutator bounds}\label{commutatorbounds}
Let us start by fixing some notation. Let $A$ be a $\Cstar$-algebra. Let  $A_{\mathrm{sa}}$ and $A_+$ denote the sets of selfadjoint and  positive elements of $A$ respectively. Let  $A^\sim$ denote the unitization of $A$ and  $M(A)$ the   multiplier $\Cstar$-algebra of $A$. 

By a commutator in $A$ we understand an element of the form $[x,y]:=xy-yx$, with $x,y\in A$. We denote the linear span of the commutators by $[A,A]$.
We regard $A/\overline{[A,A]}$ as a Banach space under the quotient norm and  let $\mathrm{Tr}\colon A\to A/\overline{[A,A]}$ denote the quotient map (called the universal trace on $A$). We regard $\Tr$ as also defined on $M_n(A)$ for all $n\in \N$ by  $\mathrm{Tr}((a_{i,j})_{i,j=1}^n)=\Tr(\sum_{i=1}^n a_{ii})$.

We denote by  $\Tb(A)$ the traces  on $A$ of norm at most 1; i.e., the  positive linear functionals on $A$ that vanish on $[A,A]$ and have norm at most 1. It follows from Hahn-Banach's theorem and the Jordan decomposition of bounded traces that  
\[
\|\Tr(a)\|=\sup\{|\tau(a)\mid \tau\in \Tb(A)|\}.
\]
(See \cite[Theorem 2.9]{cuntz-pedersen} and \cite[Proof of Lemma 3.1]{thomsen}).
In particular, 
\[
\overline{[A,A]}=\ker \Tr=\bigcap\{\ker \tau\mid  \tau\in \Tb(A)\} .
\]
We will often write $a\sim_{\Tr}b$ meaning that $\Tr(a-b)=0$; i.e., $a-b\in \overline{[A,A]}$.

In \cite{marcoux06}, Marcoux calls commutator index of a $\Cstar$-algebra the least $m\in \N$ such that
every element $h\sim_{\Tr}0$ is expressible as a sum of $m$ commutators. 
We introduce here a variation on this concept  where only approximation by sums of commutators is required. 
Furthermore, we  keep track of the norms of the  elements appearing in the commutators.

\begin{definition}
Let us say that a $\Cstar$-algebra   $A$ has  commutator bounds $(m,C)$
if for all $h\in \overline{[A,A]}$  and $\epsilon>0$, there exist  $x_1,y_1, \dots,x_m,y_m\in A$ such that
\begin{align}\label{approximation}
\Big\|h-\sum_{i=1}^m[x_i,y_i]\Big\|<\epsilon
\end{align}
and 
\begin{align}\label{control}
\sum_{i=1}^m\|x_i\|\cdot \|y_i\|\leq C\|h\|.
\end{align}
If \eqref{approximation} and \eqref{control} hold with $\epsilon=0$ for some $x_i,y_i\in A$ then we say that $A$ has commutator bounds $(m,C)$ with no  approximations.
\end{definition}

\begin{remark}\label{easierthanozawa}
We can alternatively define commutator bounds without assuming $h\in \overline{[A,A]}$ as follows: for each $h\in A$ and $\epsilon>0$ there exist $x_i,y_i\in A$, with $i=1,\dots,m$ such that
\[
\Big\|h-\sum_{i=1}^m[x_i,y_i]\Big\|\leq \|\Tr(h)\|+\epsilon
\]
and $\sum_{i=1}^m \|x_i\|\cdot \|y_i\|\leq C\|h\|$. 
\end{remark}

Many classes of $\Cstar$-algebras can be shown to have finite commutator bounds:
unital $\Cstar$-algebras with no bounded traces have  finite commutator bounds with no approximations (\cite{pop}); $\Cstar$-algebras of  nuclear  dimension  $m\in\N$ have commutator bounds $(m+1,m+1)$ (\cite[Remark 3.2]{robert-commutators}); 
by Theorem \ref{mainpure} from the introduction (proven below), pure $\Cstar$-algebras whose 2-quasitraces are traces have  commutator bounds $(7,C)$ with no approximations. On the other hand, even among simple unital nuclear $\Cstar$-algebras there are some that  have no finite commutator bounds (\cite[Theorem 1.4]{robert-commutators}).

Before going over  a number of results on the  computation of  commutator bounds, let's discuss an application of this concept to traces of products and ultraproducts. 
Let $A_i$, $i=1, 2, ...$ 
be $\Cstar$-algebras. Recall that the product $\Cstar$-algebra $\prod_{i=1}^\infty A_i$ is the $\Cstar$-algebra of bounded sequences $(a_i)_{i=1}^\infty$, with $a_i\in A_i$ for all $i$. For a given free ultrafilter $U$ of the positive integers, the ultraproduct $\prod_U A_i$ is the quotient of $\prod_{i=1}^\infty A_i$ by the ideal of sequences $(a_i)_{i=1}^\infty$ such that $\lim_U a_i=0$. For each $n\in \N$,
let us view $\tau \in \Tb(A_n)$ as an element of 
$\Tb(\prod_{i=1}^{\infty}  A_i)$
by $\tau((a_i)_i) = \tau(a_n)$.    
For each sequence of traces $(\tau_i )_i$, with  $\tau_i\in \Tb(A_i)$ for all $i$, there is a trace
  on $\prod_U A_i$ given by 
$\Tb(\prod_U A_i)\ni \overline{(a_i)_i} \mapsto  \lim_U \tau_i(a_i)$. Let us denote by $\prod_U \Tb(A_i)$ 
the weak* closure (in $\Tb(\prod_U A_i)$) of the  
set of traces that arise in this way.

\begin{proposition}\label{ultraproducts}
	Let $A_i$, with $i=1,2,\dots$, be $\Cstar$-algebras, all with commutator bounds $(m,C)$  
	for some $m\in \N$ and $C>0$.    Then 
	the convex span of the sets $\Tb(A_i)$, with $i=1,2\dots$, is weakly dense in  $\Tb(\prod _{i=1}^\infty A_i)$. Also, 
	$\prod_{U} \Tb(A_i)=\Tb(\prod_U A_i)$ for any free ultrafilter $U$.
\end{proposition}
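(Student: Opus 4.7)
The plan is a Hahn--Banach duality argument: density of the convex hull, and the ultraproduct identity, both reduce to a one-sided trace inequality which follows from the commutator bounds hypothesis applied factor-by-factor.

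First, the main input is a norm inequality on the commutator quotients: for any $(h_i)_i\in\prod_i A_i$, apply commutator bounds $(m,C)$ in each $A_i$ (in the form of Remark~\ref{easierthanozawa}) to write $h_i=\sum_{j=1}^m[x_{i,j},y_{i,j}]+k_i$ with $\|k_i\|\leq\|\Tr(h_i)\|+\epsilon$ and $\sum_j\|x_{i,j}\|\|y_{i,j}\|\leq C\|h_i\|$. Rescaling so that $\|x_{i,j}\|=\|y_{i,j}\|\leq\sqrt{C\|h\|}$ uniformly in $i$, the sequences $(x_{i,j})_i$, $(y_{i,j})_i$ assemble into elements $x_j,y_j\in\prod_i A_i$, yielding
\[
\|\Tr_{\prod A_i}((h_i)_i)\|\leq\sup_i\|\Tr_{A_i}(h_i)\|,
\]
and the analog $\|\Tr_{\prod_U A_i}(\bar h)\|\leq\lim_U\|\Tr_{A_i}(h_i)\|$ for ultraproducts. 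The reverse inequalities are elementary.

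Via Hahn--Banach duality (bipolar theorem), density of the convex hull of $\bigcup_i\Tb(A_i)$ in $\Tb(\prod_i A_i)$ is equivalent to the one-sided identity
\[
\sup_{\tau\in\Tb(\prod_i A_i)}\tau(h)=\sup_i\sup_{\sigma\in\Tb(A_i)}\sigma(h_i)
\]
for every self-adjoint $h=(h_i)_i$, with an analogous $\lim_U$-version for the ultraproduct. The ``$\geq$'' direction is clear from the embedding $\Tb(A_i)\hookrightarrow\Tb(\prod_i A_i)$. For ``$\leq$'', set $s_i:=\sup_{\sigma\in\Tb(A_i)}\sigma(h_i)\geq 0$; since any $\sigma\in\Tb(A_i)$ is a sub-scalar multiple of a tracial state, $\sigma(h_i)\leq s_i\sigma(1_{A_i^\sim})$, so $\sigma(h_i-s_i\cdot 1)\leq 0$ for every such $\sigma$. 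A Cuntz--Pedersen-type decomposition combined with the commutator bound on each $A_i$ then produces $h_i=s_i\cdot 1_{A_i^\sim}-p_i+c_i+r_i$ with $p_i\in A_{i,+}$, $c_i$ a bounded sum of $m$ commutators, $\|r_i\|<\epsilon$, and all norms controlled uniformly in $i$. Assembling these in the product and applying $\tau$, using that $\tau$ vanishes on commutators, is positive on each $p_i$, and satisfies $\tau((s_i\cdot 1_{A_i^\sim})_i)\leq\sup_i s_i$, gives $\tau(h)\leq\sup_i s_i+\epsilon$; letting $\epsilon\to 0$ closes the argument, and replacing $\sup_i$ by $\lim_U$ handles the ultraproduct.

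The main obstacle is precisely this one-sided refinement: the direct commutator bound yields only the two-sided estimate $|\tau(h)|\leq\sup_i\|\Tr(h_i)\|$, which is strictly weaker than the needed $\tau(h)\leq\sup_i s_i$ when some $h_i$ has a large negative-trace contribution. Bridging the gap requires a quantitative Cuntz--Pedersen-type decomposition that refines the commutator bound by keeping track of the positivity of the remainder, and---as the paper emphasizes in the introduction---careful handling of the non-unital case, where the shift $s_i\cdot 1$ must be interpreted in $A_i^\sim$ or $M(A_i)$ and the assembled element $(s_i\cdot 1_{A_i^\sim})_i$ lives most naturally in $M(\prod_i A_i)$.
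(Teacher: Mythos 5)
Your overall route---derive the factorwise norm inequality from Remark~\ref{easierthanozawa} and then dualize via Hahn--Banach---is exactly the route the paper intends (it simply defers to Ozawa's proof of \cite[Theorem~8]{ozawa}, substituting Remark~\ref{easierthanozawa} for \cite[Theorem~6]{ozawa}). You are also right that a one-sided estimate is what the separation argument actually requires, and that Ozawa's unit-shift is not available verbatim in the non-unital setting; this is the one place where the paper's ``same as Ozawa'' is terse. However, the fix you sketch is more complicated than needed and is not carried through: the element $s_i\cdot 1_{A_i^\sim}$ lives in $\prod A_i^\sim\subseteq M(\prod A_i)$ rather than in $\prod A_i$, and the Cuntz--Pedersen-type decomposition of $h_i$ relative to $s_i\cdot 1_{A_i^\sim}$ runs into the problem that for $\sigma\in\Tb(A_i)$ with $\|\sigma\|<1$ one has $\sigma(h_i-s_i 1)\leq s_i(1-\|\sigma\|)$, which need not be $\leq 0$; so the positivity of the putative remainder $p_i$ is not clear, and you do not show that the norms in the decomposition are controlled uniformly in $i$.

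A cleaner way to obtain the one-sided bound from the two-sided one you already proved is an approximate-unit shift inside $\prod A_i$ itself. Normalize $\tau_0\in\Tb(\prod A_i)$ to have norm $1$ (the case $\|\tau_0\|<1$ reduces to this using $0\in\Tb(A_1)$). Given self-adjoint $h$, set $s_i=\sup_{\sigma\in\Tb(A_i)}\sigma(h_i)\geq 0$, $s=\sup_i s_i$, and $\lambda=\|h\|$. Choose $e=(e_i)\in(\prod A_i)_+$ with $\|e\|\leq 1$ and $\tau_0(e)>1-\epsilon$. Then for each $i$ and every $\sigma\in\Tb(A_i)$,
\[
-\lambda\leq \sigma(h_i)+\lambda\sigma(e_i)\leq s_i+\lambda\leq s+\lambda,
\]
so $\|\Tr_{A_i}(h_i+\lambda e_i)\|\leq s+\lambda$. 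Your norm inequality then gives $\tau_0(h+\lambda e)\leq s+\lambda$, hence $\tau_0(h)\leq s+\lambda(1-\tau_0(e))<s+\lambda\epsilon$, and letting $\epsilon\to 0$ yields $\tau_0(h)\leq s$, which is exactly the one-sided estimate needed to run the separation argument. The same computation with $\lim_U$ in place of $\sup_i$ handles the ultraproduct case. In short: correct strategy, the subtlety you flag is real, but it is resolved by a one-line shift rather than a quantitative Cuntz--Pedersen decomposition, and your version of the final step is left with an unverified positivity/uniformity claim.
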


\begin{proof}
	The proof of is exactly the same as that of Ozawa's \cite[Theorem 8]{ozawa}, except
	that \cite[Theorem 6]{ozawa} is replaced with this paper's Remark 
	\ref{easierthanozawa}.	(Notice that Ozawa denotes by $\prod_U \Tb(A_i)$ the set of tracial states obtained as limits along the ultrafilter $U$ rather than its weak* closure.)
\end{proof}

Let's now look into permanence properties for the commutator bounds:

\begin{proposition}
Let $A$ be a $\Cstar$-algebra and $I$ a closed two-sided ideal of $A$.
\begin{enumerate}[(i)]
\item
If $A$ has commutator bounds $(m,C)$  then so do  $I$ and $A/I$.

\item
If $I$ and $A/I$ have commutator bounds  $(m,C)$ and $(n,D)$, respectively,    then $A$ has commutator bounds
$(m+n,C+D)$.

\item
Let $A$ be the inductive limit of $\Cstar$-algebras $(A_\lambda)_{\lambda\in \Lambda}$, each with commutator bounds $(m,C)$. Then 
 $A$ has commutator bounds $(m,C)$ too.
\end{enumerate}
\end{proposition}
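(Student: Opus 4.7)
I would prove the three items separately, since they behave quite differently.

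For (iii), take $h\in\overline{[A,A]}$ and $\epsilon>0$. Writing $h=\lim_N\sum_k[u_k^N,v_k^N]$ and approximating each $u_k^N,v_k^N$ by elements of $\bigcup_\lambda A_\lambda$, I find some $\lambda$ and $h_\lambda\in[A_\lambda,A_\lambda]$ with $\|h-h_\lambda\|$ as small as I wish. After rescaling $h_\lambda$ so that $\|h_\lambda\|\leq\|h\|$, apply $A_\lambda$'s bounds to $h_\lambda\in\overline{[A_\lambda,A_\lambda]}$ and absorb both the approximation error and the rescaling error into $\epsilon$.

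For (i), I handle the ideal and the quotient separately. For the ideal $I$: since $\overline{[I,I]}\subseteq\overline{[A,A]}$, apply $A$'s bounds to $h\in\overline{[I,I]}$ to obtain $x_i,y_i\in A$ with $\|h-\sum_i[x_i,y_i]\|<\epsilon/2$ and $\sum_i\|x_i\|\|y_i\|\leq C\|h\|$. The commutators need not lie in $I$, but I compress them using a quasi-central approximate unit $(e_\lambda)$ of $I$ relative to $A$: the substitution $(x_i,y_i)\mapsto(e_\lambda^{1/2}x_i,\,y_ie_\lambda^{1/2})$ lands in $I$, preserves the norm product, and by quasi-centrality one has $\sum_i[e_\lambda^{1/2}x_i,\,y_ie_\lambda^{1/2}]\approx\bigl(\sum_i[x_i,y_i]\bigr)e_\lambda\to h$ (using $h\in I$). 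For the quotient $A/I$: given $\bar h\in\overline{[A/I,A/I]}$, approximate $\bar h$ by $\bar g\in[A/I,A/I]$ with $\|\bar g\|$ close to $\|\bar h\|$; lift $\bar g$ to $g\in[A,A]$ and adjust by an element of $I$ to obtain $g'$ with $\pi(g')=\bar g$ and $\|g'\|\approx\|\bar g\|$. Apply $A$'s bounds in the alternative form of Remark~\ref{easierthanozawa} to $g'$ and project to $A/I$; the trace-error term is controlled because $g'$ differs from an element of $[A,A]$ only by an element of $I$, which vanishes after projection.

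For (ii), given $h\in\overline{[A,A]}$, first observe $\bar h:=\pi(h)\in\overline{[A/I,A/I]}$, since $\|\Tr_{A/I}(\bar h)\|\leq\|\Tr_A(h)\|=0$. Applying the $A/I$-bounds gives $\bar h=\sum_{j=1}^n[\bar x_j,\bar y_j]+\bar r$ with $\|\bar r\|<\epsilon/3$ and $\sum_j\|\bar x_j\|\|\bar y_j\|\leq D\|h\|$. Lift to $x_j,y_j\in A$ with norms close to the quotient norms; then $h-\sum_j[x_j,y_j]$ is within $\epsilon/3$ of some $i\in I$. Using the classical extension theorem that each bounded trace on $I$ extends to a bounded trace on $A$ of the same norm, one gets $\|\Tr_I(i)\|\leq\|\Tr_A(i)\|\leq\epsilon/3$, so $i$ is near $\overline{[I,I]}$. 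Applying $I$'s bounds (in the alternative form) to $i$ produces $m$ commutators in $I$, and combining yields $n+m$ commutators approximating $h$.

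The main obstacle lies in (ii), in pinning down the constant. A naive application gives $\|i\|\leq(1+2D)\|h\|$, yielding a combined constant on the order of $C+D+2CD$ rather than $C+D$. To recover the sharp constant one must use the freedom to modify the lifts $x_j,y_j$ by elements of $I$ — which adds elements of $[I,A]$ to the sum $\sum_j[x_j,y_j]$ — together with the identity $\overline{[I,A]}=I\cap\overline{[A,A]}$, in order to arrange that $\sum_j[x_j,y_j]$ approximates $h$ not merely modulo $I$ but in norm on $A$, so that $\|i\|$ becomes close to $\|h\|$.
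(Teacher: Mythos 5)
Parts (i) for the ideal and (iii) are correct and follow the paper's route: for the ideal you compress the factors of each commutator by a quasi-central approximate unit of $I$ in $A$ (the paper takes $x_ie_\lambda$, $y_ie_\lambda$; your variant $e_\lambda^{1/2}x_i$, $y_ie_\lambda^{1/2}$ works equally well), and for (iii) you approximate $h$ by a trace-zero contraction lying in some $A_\lambda$ and then apply that algebra's bounds.

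The quotient case of (i) has a genuine gap. You need a lift $g'\in A$ of $\bar g$ that is \emph{simultaneously} of controlled norm and trace-zero in $A$, and your construction gives one property at a time but not both. Lifting $\bar g\in[A/I,A/I]$ termwise as a sum of commutators gives $g\in[A,A]$, hence $\Tr(g)=0$, but $\|g\|$ is uncontrolled; adjusting $g$ by some $j\in I$ to get $\|g'\|\approx\|\bar g\|$ makes $\Tr(g')=-\Tr(j)$, which need not be small. Applying Remark \ref{easierthanozawa} to $g'$ then yields $\|g'-\sum_i[x_i,y_i]\|\leq\|\Tr(g')\|+\epsilon$, and projecting to $A/I$ does not improve this: the bound holds in $A$ and passes unchanged to the quotient, so the unknown quantity $\|\Tr(g')\|$ persists. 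What closes the gap is precisely the lemma the paper cites, \cite[Lemma 2.1 (i)]{kerdet}, which supplies a lift $h'$ of $h$ with $h'\sim_{\Tr}0$ and $\|h'\|\leq\|h\|+\epsilon$; your sketch circles around this lemma without supplying it or an equivalent.

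For (ii) you take a genuinely different route, and you correctly flag that it does not give the sharp constant: the residual $i$ has norm bounded only by $(1+2D)\|h\|$, leading to $C+D+2CD$. The repair you sketch --- modify the lifts by elements of $I$, invoking $\overline{[I,A]}=I\cap\overline{[A,A]}$ --- is not carried out, and it is unclear how to retain norm control on the modified commutator factors while forcing $\sum_j[x_j,y_j]$ to approximate $h$ in the norm of $A$. The paper sidesteps the problem with a different split: approximate $h$ by $e_\lambda he_\lambda + (1-e_\lambda)h(1-e_\lambda)$ for a quasi-central approximate unit $(e_\lambda)$ of $I$. Both summands have norm at most $\|h\|$ automatically. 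Compress the lifted $A/I$-commutator factors by $(1-e_\lambda)^{1/2}$ on both sides so that the resulting commutators approximate $(1-e_\lambda)h(1-e_\lambda)$, and apply the bounds of $I$ directly to $e_\lambda he_\lambda\in I$, using $\|e_\lambda he_\lambda\|\leq\|h\|$. The constant $C+D$ then falls out with no norm inflation, which is exactly what your residual-based approach has to fight against.
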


\begin{proof}
(i) Let $h\in \overline{[I,I]}$. Since $A$ has commutator
bounds  $(m, C)$, we can find $x_1,y_1,\dots,x_m,y_m\in A$  that satisfy \eqref{approximation} and \eqref{control}.
Let $(e_\lambda)_\lambda$ be an approximately central approximate unit of $I$. Then for $\lambda$ large enough $x_i'=x_ie_\lambda$ and
$y_i'=y_ie_\lambda$  satisfy \eqref{approximation} and  \eqref{control} and belong to $I$.

Let us suppose now that $h \in A/I$ and $h \sim_{\Tr}0$. It suffices to assume that $\|h\|=1$.
Let $\epsilon > 0$ be given.
By \cite[Lemma 2.1 (i)]{kerdet}, there exists a  lift $h'\in A$  of $h$ such that 
$h' \sim_{\Tr} 0$ and $\| h' \| \leq  1 + \frac \epsilon 2$.
Since $A$ has commutator bounds $(m, C)$,  there exist
  $x_1 , y_1,\dots,x_m,y_m  \in A$  such that 
\begin{gather}\label{hprime}
\Big\| \Big(1+\frac \epsilon 2\Big)^{-1}h' - \sum_{i=1}^m [x_i, y_i]\Big \|  < \frac \epsilon 2,\\
\sum_{i=1}^m \| x_i \| \cdot \| y_i \|  \leq C.\nonumber
\end{gather}
Observe now that $\|h'-(1+\frac \epsilon 2)^{-1}h'\|=\|h'\|(1-(1+\frac \epsilon 2)^{-1}\|\leq \frac \epsilon 2$. Hence, from \eqref{hprime} we get 
\[
\Big\|h'-\sum_{i=1}^m[x_i,y_i]\Big\|<\epsilon.
\]
Thus, the images in the quotient $A/I$ of $x_1,y_1,\dots x_m,y_m$, and $h'$,  satisfy \eqref{approximation} and  \eqref{control}, as desired.

(ii) Let $h \in A$ with $h \sim_{\Tr} 0$.
 Let $h'\in A/I$ denote the image of $h$ in $A/I$.
 Let $\epsilon > 0$ be given. 
 Since the quotient $\Cstar$-algebra $A/I$ has commutator bounds $(n, D)$, 
 there exist $x'_1, y'_1,\dots,x_n',y_n' \in A/I$ such that 
$\| h' - \sum_{i=1}^n [x'_i, y'_i] \| < \frac \epsilon {3}$ 
and $\sum_{i=1}^n \| x'_i \|\cdot \| y'_i \| \leq D \| h' \|$.
For  $i=1,\dots,n$, let us find lifts $x''_i$ and  $y''_i$   in $A$ of  $x'_i$ and $y'_i$ , respectively, such that 
 $\| x''_i \| = \| x'_i \|$ and $\| y''_i \| = \| y'_i \|$. 
 Let $(e_{\lambda})_{\lambda}$ be an approximately central approximate
 unit of  the ideal $I$. We can choose an index $\lambda$ such that 
 \[
 \| h - e_{\lambda} h e_{\lambda} - (1 - e_{\lambda}) h (1 - e_{\lambda}) 
 \|  < \frac \epsilon {3},
 \]
and 
\[
 \Big\| (1 - e_{\lambda} ) h (1 - e_{\lambda}) - 
 \sum_{i=1}^n [ x_i, y_i]\Big \| < \frac \epsilon {3},
\] 
where
 $x_i = (1 - e_{\lambda} )^{1/2} x''_i (1 - e_{\lambda} )^{1/2}$
 and 
 $y_i = (1 - e_{\lambda} )^{1/2} y''_i (1 - e_{\lambda} )^{1/2}$ 
 for $i=1,\dots,n$. Note that 
 \[
 \sum_{i=1}^n \| x_i \|\cdot \| y_i \| \leq 
\sum_{i=1}^n \| x''_i \| \cdot \| y''_i \| \leq D \| h' \| \leq D \| h \|.
\]
 Since $e_{\lambda} h e_{\lambda} \in I$ and the ideal $I$ has commutator
 index $(m, C)$, 
 we can find elements $x_{n+1}, y_{n+1},\dots,x_{n+m},y_{n+m} \in I$  such that 
\begin{gather*}
\Big\| e_{\lambda} h e_{\lambda} - \sum_{i=n+1}^{n+m} [x_i, y_i] 
 \Big\| < \frac\epsilon {3},\\
 \sum_{i=n+1}^{n+m} \| x_i \|\cdot \| y_i \| \leq C \| e_{\lambda} h 
 e_{\lambda} \| \leq C \| h \|.
 \end{gather*}
 Hence, $\sum_{i=1}^{m+n} \| x_i \| \cdot \| y_ i \| \leq (C+D)\| h \|$ and 
\begin{multline*}
\Big\| h - \sum_{i=1}^{m+n} [x_i, y_i] \Big\| 
\leq \|h-e_{\lambda} h e_{\lambda} -(1 - e_{\lambda}) h (1 - e_{\lambda})\|\\
+\Big\|(1 - e_{\lambda}) h (1 - e_{\lambda})-\sum_{i=1}^n [ x_i, y_i]\Big\|+
\Big\|e_{\lambda} h e_{\lambda}-\sum_{i=n+1}^{n+m} [x_i, y_i] \Big \|<\epsilon.
\end{multline*}

(iii) Since we have already shown that the commutator bounds pass to quotients, we may assume that
$A_\lambda\subseteq A$ for all $\lambda$ and that $\overline{\bigcup_{\lambda\in \Lambda} A_\lambda}=A$.

Let $h \in A$ be such that $h \sim_{\Tr} 0$.  Let $\epsilon > 0$ be given. Let us prove the existence of $x_1,y_1,\dots,x_m,y_m\in A$ 
satisfying \eqref{approximation} and \eqref{control}.
It is clear that we may  reduce ourselves to the case   $\|h\|=1$. 
We claim there exists   $\lambda\in \Lambda$ and a contraction $h'\in A_\lambda$ such that $\|h-h'\|<\epsilon/2$ and $h'\sim_{\Tr}0$ in $A_\lambda$. 
To prove this, we first  approximate $h$ sufficiently by a finite sum of commutators:
$\| h - \sum_{j=1}^k [v_j, w_j] \| < \epsilon/4$. Next, we
choose $\lambda \in \Lambda$ and $v'_1, w'_1,\dots,v_k',w_k' \in A_\lambda$  such that  $\|v_j-v_j'\|$ and $\|w_j-w_j'\|$ are sufficiently small for all $j$, so that
$\|h-\sum_{j=1}^k [v'_j, w'_j ]\|<\epsilon/4$. Finally, we set 
\[
h'=\Big(1+\frac \epsilon 4\Big)^{-1} \sum_{j=1}^k [v'_j,w'_j ].
\] 
Notice then
that 
\[
\|h-h'\|\leq \Big\|h-\Big(1+\frac \epsilon 4\Big) h'\Big\|+\frac \epsilon 4\|h'\|<\frac \epsilon 2,
\] 
and that $h'\sim_{\Tr}0$ in $A_\lambda$, as desired.  

Since $A_\lambda$ has commutator bounds $(m, C)$, there exist  
$x_1, y_1,\dots,x_m,y_m \in A_\lambda$
such that 
$\| h' - \sum_{j=1}^m [x_i, y_i ] \| 
< \epsilon/2$ and 
$\sum_{i=1}^m \| x_i \|\cdot \| y_i \| \leq C \| h' \|\leq C$. These are the desired elements. 
 \end{proof}

It is possible to reduce  the number of commutators by passing from a $\Cstar$-algebra $A$ with commutator bounds $(m,C)$ to a matrix algebra $M_n(A)$.  This, however, is  achieved at the expense of increasing the constant $C$.	   
Marcoux obtains such a reduction  for unital $\Cstar$-algebras in   \cite[Lemma 4.1]{marcoux09}  and
\cite[Lemma 2.2]{marcoux06}. Here,  we cover the non-unital case and  give explicit bounds  for the norms of the commutators.

\begin{lemma}\label{diagonalcommutator}
Let $d_1,\dots,d_n$ in $A$ be such that $\sum_{i=1}^n d_i=0$. Then there exist
$X,Y\in M_n(A)$, with $\| X \| \cdot \|Y\|\leq 4n \max_i \| d_i \|$, such that  the main diagonal of $[X,Y]$ equals $(d_1,\dots,d_n)$.
\end{lemma}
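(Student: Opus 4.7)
Set partial sums $s_k := \sum_{i=1}^{k} d_i$ for $0 \le k \le n$; the hypothesis $\sum_i d_i = 0$ gives $s_0 = s_n = 0$, and $d_k = s_k - s_{k-1}$. Writing $M := \max_i \|d_i\|$, we have $\|s_k\| \le \min(k, n-k)\, M \le nM$. I would first handle the unital case. The natural candidates are
\[
X_0 := \sum_{i=1}^{n-1} e_{i,i+1}, \qquad Y_0 := \sum_{i=1}^{n-1} s_i \, e_{i+1,i}.
\]
Using the matrix-unit identity $e_{i,i+1} e_{j+1,j} = \delta_{ij} e_{i,i}$ and its transpose, one finds $X_0 Y_0 = \operatorname{diag}(s_1, \ldots, s_{n-1}, 0)$ and $Y_0 X_0 = \operatorname{diag}(0, s_1, \ldots, s_{n-1})$, so the diagonal of $[X_0, Y_0]$ is $(s_1,\, s_2-s_1,\, \ldots,\, s_{n-1}-s_{n-2},\, -s_{n-1}) = (d_1, \ldots, d_n)$, invoking $s_0 = 0$ and $s_n = 0$ at the two endpoints. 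Since $Y_0^*Y_0$ is diagonal one has $\|Y_0\| = \max_k \|s_k\| \le nM$, while clearly $\|X_0\| \le 1$, so $\|X_0\|\cdot\|Y_0\| \le nM$, well below the allowed $4nM$.

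The obstruction to simply setting $X := X_0$, $Y := Y_0$ is that $X_0 \notin M_n(A)$ when $A$ is non-unital: its nonzero entries are scalar units. My plan is to replace these $1$'s by a positive contraction $a \in A$ that serves as a two-sided unit on the finite set $\{s_1, \ldots, s_{n-1}\}$, i.e.\ $a s_k = s_k a = s_k$ for each $k$. Such an $a$ is to be extracted from the hereditary subalgebra $B := \her\!\bigl(\sum_k (s_k^* s_k + s_k s_k^*)\bigr)$ of $A$, which contains all the $s_k$'s: take $a := f(h)$ for a strictly positive $h \in B$ and a continuous function $f \colon [0, \|h\|] \to [0,1]$ vanishing at $0$ and equal to $1$ on the spectrum of $|s_k|$ (or rather, far enough from $0$). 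Then $X := \sum_{i=1}^{n-1} a \cdot e_{i,i+1}$ lies in $M_n(A)$, the computation above goes through unchanged, and $[X, Y_0]$ has diagonal $(d_1, \ldots, d_n)$ with $\|X\|\cdot\|Y_0\| \le nM$.

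The main obstacle will be ensuring that $a$ acts as an \emph{exact} two-sided unit on the $s_k$'s, since in the generic non-unital situation the spectrum of $h$ accumulates at $0$ and no such exact $a$ exists in $A$. I expect the fix to be the standard one: run the construction with an $a$ that is only an approximate unit, producing $X, Y \in M_n(A)$ whose commutator has diagonal $(d_1', \ldots, d_n')$ with $\max_k \|d_k - d_k'\|$ arbitrarily small and $\sum_k (d_k - d_k') \in [A,A]$; then absorb the residual into an additional small commutator correction on the diagonal. The factor $4$ in the stated bound $\|X\|\cdot\|Y\|\leq 4n\max\|d_i\|$ is generous enough to absorb both this approximation loss and any norm inflation coming from the correction step.
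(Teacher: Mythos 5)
Your unital argument is correct and, when it applies, even gives the sharper bound $\|X_0\|\cdot\|Y_0\|\le n\max_i\|d_i\|$: the diagonal of $[X_0,Y_0]$ telescopes to $(d_1,\dots,d_n)$ exactly as you compute. But the non-unital case, which is exactly where the paper needs the lemma, is a genuine gap, and the sketch you give of ``the standard fix'' does not close it.

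Here is the problem with the approximate-unit route. Taking $X=\sum_i a\,e_{i,i+1}$ and $Y=Y_0$ with $a\in A$ an approximate unit gives $[X,Y]_{i,i}=as_i-s_{i-1}a=:d_i'$, so the residuals are $t_i:=d_i-d_i'=(1-a)s_i-s_{i-1}(1-a)$ (computed in $A^\sim$). These are small, and as you observe $\sum_i t_i=[S,a]$ with $S=\sum_k s_k$, so it lies in $[A,A]$. But the statement requires the main diagonal of a \emph{single} commutator to equal $(d_1,\dots,d_n)$ \emph{exactly}. To ``absorb'' the residual you would need to realize $(t_1-[S,a],t_2,\dots,t_n)$ as the diagonal of another commutator $[X',Y']$ --- which is literally the lemma again, just with smaller data. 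Iterating produces $(d_1,\dots,d_n)$ as the diagonal of an \emph{infinite sum} of commutators, not of one commutator; and there is no telescoping identity of the form $[X,Y]+[X',Y']=[X'',Y'']$ to collapse the sum, because the $X^{(k)},Y^{(k)}$ here are not mutually orthogonal. The factor of $4$ in the bound buys slack on norms, but the obstruction here is algebraic exactness, not norm control.

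The paper sidesteps the unit entirely by factoring each partial sum: write $s_k=q_kr_k$ with $q_k,r_k\in A$ and $\|q_k\|=\|r_k\|=\|s_k\|^{1/2}$ (via the polar decomposition $s_k=v|s_k|$ in $A^{**}$, with $q_k=v|s_k|^{1/2}\in A$ and $r_k=|s_k|^{1/2}$). It then takes $X$ upper bidiagonal with superdiagonal $q_1,\dots,q_{n-1}$ and diagonal $(0,r_1,\dots,r_{n-1})$, and $Y$ lower bidiagonal with subdiagonal $r_1,\dots,r_{n-1}$ and diagonal $(0,q_1,\dots,q_{n-1})$. Then $(XY)_{ii}=r_{i-1}q_{i-1}+q_ir_i$ and $(YX)_{ii}=q_{i-1}r_{i-1}+r_{i-1}q_{i-1}$, so the awkward cross-term $r_{i-1}q_{i-1}$ (which is \emph{not} $s_{i-1}$ in a noncommutative algebra) cancels and $[X,Y]_{ii}=q_ir_i-q_{i-1}r_{i-1}=s_i-s_{i-1}=d_i$ on the nose. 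This is the step your sketch is missing: splitting $s_k$ multiplicatively across $X$ and $Y$, rather than placing $s_k$ entirely in $Y$ and hoping for a unit in $X$. It is also what accounts for the $4$ in the bound: each of $\|X\|,\|Y\|$ is controlled by $2\max_k\|s_k\|^{1/2}\le 2(nM)^{1/2}$ rather than your $1$ and $nM$.
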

\begin{proof}
	For  $k=1,\dots,n-1$, let $s_k= \sum_{i=1}^k d_{i}$. Let us write $s_k=q_kr_k$, for some $q_k,r_k\in A$ such that
	$\|q_k\|= \|r_k\|=\|s_k\|^{\frac 1 2}$ (e.g.,  $q_k=v|s_k|^{\frac 1 2}$ and $r_k=|s_k|^{\frac 1 2}$, where   $s_k = v |s_k|$ is the polar decomposition  of 
	$s_k$ in $A^{**}$).  
	Let
	\[
	X = 
	\begin{pmatrix}
	0 & q_1 &&&\\
	&r_1&q_2&&\\
	&&r_2&\ddots&\\
	&&&\ddots& q_{n-1}\\
	&&&& r_{n-1}
	\end{pmatrix},
	\quad	
	Y = 
	\begin{pmatrix}
	0 &&&&\\
	r_1&q_1&&&\\
	&r_2&q_2&&\\
	&&\ddots& \ddots &\\
	&&& r_{n-1}&q_{n-1}
	\end{pmatrix}.
	\]
	A straightforward computation shows that $X$ and $Y$ are as required.
\end{proof}
	
\begin{lemma}\label{zerodiagonal}
Let $h\in M_n(A)$, with  $h=(h_{j,k})_{j,k}$.  
 Suppose that $h_{j,j}=[x_j,y_j]$ for  some $x_j,y_j\in A$, for  $j=1,\dots,n$. Then $h=[X,Y]$ for some $X,Y\in M_n(A)$    such that  
 $\| X \|\cdot \|Y\| \leq 36n^2 (n-1)  \| h \| + 3n\sum_{k=1}^n 
		\| x_k \|\cdot  \| y_k \|$. 
\end{lemma}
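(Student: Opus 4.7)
The plan is to decompose $h$ into diagonal and off-diagonal parts, realize each as a commutator in $M_n(A)$, and combine them into a single commutator. Write $h = D + F$, where $D := \mathrm{diag}(h_{1,1}, \ldots, h_{n,n}) = \mathrm{diag}([x_1, y_1], \ldots, [x_n, y_n])$ and $F := h - D$ has zero diagonal. The diagonal part is immediate: $D = [\mathrm{diag}(x_1, \ldots, x_n), \mathrm{diag}(y_1, \ldots, y_n)]$, which contributes a product of norms at most $\max_j \|x_j\| \cdot \max_j \|y_j\| \leq \sum_k \|x_k\|\|y_k\|$.

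For the off-diagonal part $F$, I would realize $F = [X_F, Y_F]$ in $M_n(A)$ with $\|X_F\| \cdot \|Y_F\| \leq C n^2 \|F\|$ by adapting the bidiagonal construction of Lemma \ref{diagonalcommutator}. Concretely, decompose $F$ into its $n-1$ upper and $n-1$ lower off-diagonal bands and, for each band, build a tridiagonal-style matrix in $M_n(A)$ whose commutator captures that band; the entries, as in Lemma \ref{diagonalcommutator}, will be factored pieces of sums of off-diagonal entries of $h$, so that everything remains in $M_n(A)$ (no matrix units required).  These partial contributions are then assembled into a single pair $(X_F, Y_F)$.

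The principal obstacle is combining $D = [D_X, D_Y]$ and $F = [X_F, Y_F]$ into a single commutator $h = [X, Y]$ inside $M_n(A)$ while preserving norm control.  The natural attempt $X = D_X + X_F$, $Y = D_Y + Y_F$ gives
\[
[X, Y] = D + F + [D_X, Y_F] + [X_F, D_Y],
\]
introducing a cross term that must be arranged to vanish or to lie purely on the diagonal. Making it vanish would require solving Sylvester-type equations not generally available in a $C^*$-algebra, so instead I expect to choose the structure of $X_F, Y_F$ so that the cross term affects only the diagonal, and then cancel that diagonal perturbation by one final application of Lemma \ref{diagonalcommutator} to a zero-sum diagonal. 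The coefficients $36 n^2(n-1)$ and $3n$ in the final bound should emerge from multiplying the Lemma \ref{diagonalcommutator} factor $4n$ by contributions accumulated across the $n-1$ off-diagonal bands, and from the amplification of the diagonal commutator $[D_X, D_Y]$ incurred by the combining correction.
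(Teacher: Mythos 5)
Your proposal contains a genuine gap, and it stems from dismissing the very tool the paper uses. You write that making the cross term vanish "would require solving Sylvester-type equations not generally available in a $C^*$-algebra," but the paper's proof does precisely that. The trick is to replace the diagonal entries $x_j$ by $d_j = x_j + 3(j-1)\,1 \in A^\sim$, so that the $d_j$'s have pairwise disjoint spectra; then the Rosenblum operator $T_{k,j}(b) = d_k b - b d_j$ is invertible on $A^\sim$ (indeed on $A$, since the spectral shift is by scalars), and its inverse has an explicit contour-integral formula that yields the norm bound. One sets $X = \mathrm{diag}(d_1,\dots,d_n)$ and takes $Y$ with entries $b_{k,j} = T_{k,j}^{-1}(h_{k,j})$ off the diagonal and $b_{k,k} = y_k$ on the diagonal; then $[X,Y] = h$ \emph{in a single step}, with the stated control on $\|X\|\cdot\|Y\|$. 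The spectral-shift device is exactly what makes the "unavailable" Sylvester equation available.

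Your alternative route of writing $h = D + F$ and then trying to merge $D = [D_X,D_Y]$ and $F = [X_F,Y_F]$ into one commutator does not close. Two commutators cannot generally be combined into one by addition, and your suggested fix---arranging the cross terms $[D_X,Y_F] + [X_F,D_Y]$ to lie on the diagonal, then cancelling that diagonal perturbation "by one final application of Lemma \ref{diagonalcommutator}"---is circular: adding the output of Lemma \ref{diagonalcommutator} to $X$ and $Y$ produces yet more cross terms with the pieces already present. You also don't justify that the cross terms can be confined to the diagonal, nor do you show how to realize an arbitrary zero-diagonal $F$ as a single commutator with $\|X_F\|\cdot\|Y_F\| \lesssim n^2\|F\|$; that off-diagonal realization is in fact the nontrivial content of the lemma, which the Rosenblum argument handles in one pass rather than by stitching bands together.
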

\begin{proof}
Let $h\in M_n(A)$ and $x_j,y_j\in A$, with $j=1, \dots,n$, be as in the statement of the lemma. Let us also assume that $x_j$ is a contraction for all $j$  (replacing $y_j$ with $\| x_j \| y_j$ if necessary). Let $\lambda_j = 3(j-1)$ 
and $d_j  = x_j + \lambda_j 1 \in A^{\sim}$,
	for $j=1,\dots,n$.
	Notice that the spectrum of $d_j$ is contained in $\{z\in \C\mid |z-\lambda_j|<1\}$ for all $j$.
	In particular, the spectra of the $d_j$'s are pairwise disjoint.
	Notice also  that $[d_j, b] = [x_j, b]$ for all $b \in A^{\sim}$  and
	  $j=1, \dots,n$. 
	  Let us fix $k,j$ with $1\leq k,j\leq n$ and $k\neq j$.
	By \cite[Corollary 3.2]{herrero},  
	the Rosenblum operator $T_{k,j} \colon A^{\sim} \rightarrow A^{\sim} $, 
	defined by $ T_{k,j}(b)=d_k b - b d_j$,  is invertible.
	Let
	$b_{k,j} \in A^{\sim}$ be such that 
	$T_{k,j}(b_{k,j}) = d_k b_{k,j} - b_{k,j} d_j = h_{k,j}$.
	Since $h_{k,j} \in A$, and by our choice of $\lambda_k$ and  $\lambda_j$,
	we must have that $b_{k,j} \in A$. 	
	
	Let us define $b_{k,k} := y_k$ for $k=1,\dots,n$.  
Let $X, Y \in M_n(A)$ be given by 
\begin{align*}
	X = \mathrm{diag}(d_1,\dots,d_n),\quad
	Y = (b_{k,j})_{k,j=1}^n. 
\end{align*}
It is a straightforward computation show that $[X,Y]=a$ (see \cite[Lemma 2.2]{marcoux06}).

Let us find bounds for the norms of $X$ and $Y$. The bound $\| X \|=\max_j \|d_j\|\leq 3 n$ is straightforward. In order to bound $\|Y\|$, we first estimate $\|b_{k,j}\|$. Fix $k,j$ such that $1\leq k,j\leq n$ and $k\neq j$. By \cite[Corollary 3.20]{herrero}, 
\begin{equation}\label{equ:RosenblumInverseEquation}
b_{k,j} = \frac{1}{2 \pi i} \int_{\Gamma_k} (d_k - \alpha 1)^{-1} h_{k,j} (d_j - \alpha 1)^{-1} d \alpha, 
\end{equation} 
where $\Gamma_k$ is the positively oriented simple closed contour given 
by $\Gamma_k(t) = \lambda_k + \frac{3}{2} e^{it} $, for $t \in [0, 2 \pi]$.
We have
\[
(d_k - \alpha 1)^{-1} = \frac{1}{\lambda_k - \alpha} 
\left( 1 - \frac{d_k - \lambda_k 1}{ \alpha - \lambda_k} \right)^{-1}.
\]
But $\| d_k - \lambda_k 1 \| = \| x_k \| \leq 1$  and
$| \alpha - \lambda_k | = 3/2$ for all $\alpha \in \Gamma_k$.
So, $\| \frac{d_k - \lambda_k 1}{ \lambda_k - \alpha} \| \leq 1 / (3 /2) = 2/3$, from which we deduce that
	\[
	\| (d_k - \alpha 1)^{-1} \| 
	\leq \frac{2}{3} \sum_{l=0}^{\infty} (2/3)^l 
	= 2
	\] 
	for all $\alpha \in \Gamma_k$.
	Next, since $k \neq j$, 
	$|\lambda_j - \alpha | \geq 3/2$ for all $\alpha \in \Gamma_k$.  Also, 
	$\| d_j - \lambda_j 1 \| = \| x_j \| \leq 1$.
Hence, 
\[
\|(d_j - \alpha 1)^{-1}\| = \frac{1}{\|\lambda_j - \alpha\|} 
\left\|\Big( 1 - \frac{d_j - \lambda_j 1}{ \alpha - \lambda_j} \Big)^{-1}\right \|
\leq \frac{2}{3} \sum_{l=0}^{\infty} (2/3)^l
= 2,
\]	
	for all $\alpha \in \Gamma_k$. 
	Thus, $\| (d_k - \alpha 1)^{-1} \|\leq 2$ and $\| (d_j - \alpha 1)^{-1} \| \leq 2$ for all $\alpha \in \Gamma_k$.
	From this and (\ref{equ:RosenblumInverseEquation}), we get 
	\[\| b_{k,j} \| \leq
	(\frac{1}{2 \pi}) 4 \| h_{k,j} \| \cdot \mathrm{length}(\Gamma_k)
	=  
	(\frac{1}{2 \pi}) 4 \| h_{k,j} \| (6 \pi  )
	= 12 \| h_{k,j} \| 
	\leq 12 \| h \|,\]
	for all
	$k \neq j$.
	Recall that, by our conventions,
	$\| b_{k,k} \| = \| y_k \| = \| x_k \|\cdot  \| y_k \|$.
	Then 
	$
	\| Y \| \leq n (n - 1) 12 \| h \|  + \sum_{k=1}^n \| x_k \|\cdot  \| y_k \|
	$. This, together with $\|X\|\leq 3n$, proves the lemma.
\end{proof}

\begin{theorem}\label{matrixreduction}
Let $A$ be a $\Cstar$-algebra and $n\in \N$.
\begin{enumerate}[(i)]
\item
		If $M_n(A)$ has commutator bounds $(m,C)$ (with no approximations) then $A$ has commutator bounds $(mn^2,Cn)$ (with no approximations).
\item
		If $A$ has commutator bounds $(m,C)$ (with no approximations), then $M_n(A)$ has commutator bounds $(2,C')$ (with no approximations) for all $n\geq m$, where 
		$C'\leq 36 n^3 + (2C - 36)n^2 + n$.
\end{enumerate}
\end{theorem}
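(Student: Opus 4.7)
My plan is to handle the two implications separately, each time passing between $A$ and $M_n(A)$ via a natural map and invoking Lemmas~\ref{diagonalcommutator} and \ref{zerodiagonal}.

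For part (i), I would embed $h\in\overline{[A,A]}$ in $M_n(A)$ as $\tilde h:=\tfrac{1}{n}\mathrm{diag}(h,h,\dots,h)$. Then $\|\tilde h\|=\|h\|/n$ and $\Tr_{M_n(A)}(\tilde h)=\Tr_A(h)=0$, so $\tilde h\in\overline{[M_n(A),M_n(A)]}$. The hypothesis gives $\tilde h=\sum_{i=1}^m[X_i,Y_i]$ with $\sum_i\|X_i\|\|Y_i\|\leq C\|h\|/n$. Applying the ``diagonal sum'' map $\Phi\colon M_n(A)\to A$, $\Phi(M):=\sum_{k=1}^n M_{kk}$, sends $\tilde h$ to $h$. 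The key algebraic observation is that $\Phi([X,Y])=\sum_{k,j}(x_{kj}y_{jk}-y_{kj}x_{jk})$ can be regrouped as a sum of $n^2$ commutators in $A$: the $n$ diagonal terms are already $[x_{kk},y_{kk}]$, and for each off-diagonal pair $\{k,j\}$ with $k\neq j$ one uses the identity
\[
(x_{kj}y_{jk}-y_{kj}x_{jk})+(x_{jk}y_{kj}-y_{jk}x_{kj})=[x_{kj},y_{jk}]+[x_{jk},y_{kj}].
\]
Summing over $i$ expresses $h$ as $mn^2$ commutators with total weight $\leq n^2\cdot C\|h\|/n=Cn\|h\|$. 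The with/without-approximations distinction is handled by starting from $\epsilon/n$ in $M_n(A)$ and using $\|\Phi(\cdot)\|\leq n\|\cdot\|$.

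For part (ii), given $h=(h_{ij})\in\overline{[M_n(A),M_n(A)]}$, set $s:=\sum_i h_{ii}\in\overline{[A,A]}$ and use $A$'s bounds to write $s=\sum_{k=1}^m[a_k,b_k]$ with $\sum_k\|a_k\|\|b_k\|\leq C\|s\|$. Exploiting $n\geq m$, distribute the commutators along the diagonal by setting $c_i:=[a_i,b_i]-h_{ii}$ for $i\leq m$ and $c_i:=-h_{ii}$ for $i>m$, so that $\sum_i c_i=0$. Lemma~\ref{diagonalcommutator} then yields $X_0,Y_0\in M_n(A)$ with $[X_0,Y_0]$ having diagonal $(-c_1,\dots,-c_n)$ and $\|X_0\|\|Y_0\|\leq 4n\max_i\|c_i\|$. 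The matrix $h^{(1)}:=h-[X_0,Y_0]$ has diagonal entries $h_{ii}+c_i$ equal to $[a_i,b_i]$ for $i\leq m$ and $0$ for $i>m$, each individually a commutator in $A$. Lemma~\ref{zerodiagonal} therefore produces $X_1,Y_1\in M_n(A)$ with $h^{(1)}=[X_1,Y_1]$, completing the 2-commutator decomposition $h=[X_0,Y_0]+[X_1,Y_1]$. Since both lemmas produce exact commutators, the no-approximations version is preserved automatically.

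The main obstacle is extracting the sharp constant $C'\leq 36n^3+(2C-36)n^2+n$. The leading $36n^2(n-1)$ matches Lemma~\ref{zerodiagonal}'s bound \emph{only if} $\|h^{(1)}\|$ remains of order $\|h\|$; the $2Cn^2$ contribution should come from $3n\sum_k\|a_k\|\|b_k\|\leq 3Cn\|s\|$, trimmed using $\|s\|\leq n\|h\|$; and the trailing $n$ from $\|X_0\|\|Y_0\|\leq 4n\max_i\|c_i\|$. The delicate point is controlling $\|h^{(1)}\|$: a naive triangle-inequality estimate gives $\|h^{(1)}\|=O(Cn^2\|h\|)$, which would inflate the final constant to $O(Cn^5)$. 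Avoiding this inflation will require either choosing the commutator decomposition $s=\sum[a_k,b_k]$ so that $\max_i\|c_i\|$ is of order $\|h\|/n$, or exploiting the explicit bidiagonal structure of the $X_0,Y_0$ produced by Lemma~\ref{diagonalcommutator} to show that the off-diagonal cost of $[X_0,Y_0]$ is absorbed into the bound on $\|h\|$ rather than being added to it.
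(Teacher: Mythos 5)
Your approach coincides with the paper's in both parts. For (i), the paper embeds $h$ as the unscaled $h\otimes 1_n$, approximates by commutators in $M_n(A)$, and then \emph{averages} the main diagonal; your $\tfrac{1}{n}$-scaled embedding followed by the diagonal-sum map $\Phi$, together with your regrouping identity, amount to exactly the same computation, resting on the algebraic observation $\sum_k([X,Y])_{kk}=\sum_{k,l}[x_{kl},y_{lk}]$, and both give $mn^2$ commutators of total weight $\leq Cn\|h\|$. For (ii), your decomposition (shift the $m$ commutators representing $\sum_i h_{ii}$ onto the first $m$ diagonal slots via Lemma~\ref{diagonalcommutator}, then apply Lemma~\ref{zerodiagonal} to the remainder) is precisely the paper's argument.

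Your worry about the stated constant is well founded. The paper's only justification for $C'\leq 36n^3+(2C-36)n^2+n$ is the single sentence ``The bound on $C'$ follows from the norm bounds in Lemmas~\ref{diagonalcommutator} and \ref{zerodiagonal},'' and if one actually runs the estimates one gets, as you note, $\max_i\|c_i\|\leq(1+2Cn)\|h\|$, whence $\|X_0\|\|Y_0\|\leq 4n(1+2Cn)\|h\|$, $\|h-[X_0,Y_0]\|\leq(1+8n+16Cn^2)\|h\|$, and finally a total of order $Cn^5\|h\|$ rather than the displayed cubic-in-$n$ formula; neither of your two proposed repairs obviously closes this gap, and I do not see how to reach the stated formula from this line of argument. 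This discrepancy does not, however, undermine the paper: the only thing used downstream (e.g.\ in the proof of Theorem~\ref{mainpure}) is that $C'$ is bounded by \emph{some} universal polynomial in $n$ and $C$, which both your argument and the paper's do establish. So your proof is correct in substance and matches the paper's; the explicit $C'$ formula should be regarded as an unverified (and apparently too optimistic) bookkeeping claim rather than as something your argument needs to reproduce.
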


\begin{proof}
	(i) Let $h \in A$ with $h \sim_{\Tr} 0$.
	Then $a \otimes 1_n \in M_n(A)$ and $a \otimes 1_n 
	\sim_{\Tr} 0$ in $M_n(A)$. 
	Let $\epsilon > 0$ be given.  Since $M_n(A)$ has commutator bounds 
	$(m, C)$, there exist $X_j, Y_j \in M_n(A)$ ($1 \leq j \leq m$) such that
	\begin{gather}\label{matrixtoA}
		\| h \otimes 1_n - \sum_{j=1}^m [X_j, Y_j] \| < \epsilon,\\
		\sum_{j=1}^m \| X_j \| \cdot \| Y_j \| \leq C \| a \otimes 1_n \|  = C \| h \|.
		\label{matrixbound}
	\end{gather}
	Averaging  along the main diagonal  in \eqref{matrixtoA} we get 
	\[
	\| h - \frac{1}{n}\sum_{j=1}^m \sum_{k,l=1}^n [x_{j,k,l}, y_{j,l,k}]\| < \epsilon,
	\] 
	where $X_j = (x_{j,k,l})_{k,l=1}^n$
	and $Y_j = (y_{j,k,l})_{k,l=1}^n$. On the other hand, using \ref{matrixbound} we get
	\[
	\frac{1}{n}\sum_{j=1}^m \sum_{k,l=1}^n \|x_{j,k,l}\|\cdot \|y_{j,l,k}\|\leq 
	\frac{1}{n}\sum_{j=1}^m \sum_{k,l=1}^n \|X_j\|\cdot \|Y_j\|=n\sum_{j=1}^m\|X_j\|\cdot \|Y_j\|\leq nC\|h\|,
	\]
	as required. The same arguments above, but with $\epsilon=0$, prove the result for commutator bounds with no approximations.
	
	(ii) Let us deal with the case of commutator bounds with no approximations. 
Let $h\in M_n(A)$ be such that $h\sim_{\Tr}0$.  Then 
	$\sum_{i=1}^n h_{i,i}\sim_{\Tr}0$ in $A$. But $A$ has commutator bounds $(m,C)$ with no approximations. Hence, $\sum_{i=1}^n h_{i,i}=\sum_{i=1}^m [x_i,y_i]$ for some $x_i,y_i\in A$.    
	By Lemma \ref{diagonalcommutator}, there exist  $X_1,Y_1 \in M_n(A)$ such that   the entries along the main diagonal of $[X_1,Y_1]$ equal 
	\[
	 h_{1,1}-[x_1,y_1],\dots,h_{m,m}-[x_m,y_m],h_{m+1,m+1},\dots, h_{n,n}.
	\] Now, by Lemma \ref{zerodiagonal}, $h-[X_1,Y_1]=[X_2,Y_2]$ for some $X_2,Y_2\in M_n(A)$. The bound on $C'$  follows from the norm bounds in Lemmas \ref{diagonalcommutator} and \ref{zerodiagonal}.

In the case that the algebra $A$ has commutator bounds $(m,C)$ with approximations,   the initial element $h\sim_{\Tr}0$ can be slightly perturbed along the main diagonal so that, for the perturbed element,  the sum of the  diagonal entries  is exactly a sum of $m$ commutators.  The   arguments above then show that  the perturbed element is a sum of two commutators.
\end{proof}

The proof of Theorem \ref{fackstechnique} below relies on a technique first used by Fack in \cite{fack}. Despite its technical statement, Theorem \ref{fackstechnique}
constitutes our main tool in proving that a $\Cstar$-algebra has finite commutator bounds with no approximations. Before stating the theorem, we introduce some definitions and prove a lemma.

Let us define the direct sum  of positive elements in $A\otimes \mathcal K$.   Fix isometries $v_1,v_2\in B(\ell_2)$ generating the Cuntz algebra $\mathcal O_2$. Let us regard them  as elements of the multiplier algebra  $M(A\otimes \mathcal K)$  via the natural embeddings $1\otimes B(\ell_2)\subseteq M(A)\otimes B(\ell_2)\subseteq M(A\otimes \mathcal K)$. Then, given $a,b\in (A\otimes \mathcal K)_+$, let us  define $a\oplus b\in (A\otimes \mathcal K)_+$ by $a\oplus b=v_1av_1^*+v_2bv_2^*$.

Next, let us introduce a preorder relation on the positive element of a $\Cstar$-algebra. Let $a,b\in A_+$.
Let us write $a\precsim b$ if $a=x^*x$ and $xx^*\in \her(b)$ for some $x\in A$. This relation is called Blackadar's relation in \cite{ortega-thiel-rordam}. It can be alternatively described as saying that the right ideal  $\overline{aA}$ embeds into $\overline{bA}$ as a Hilbert module. (Thus,  it   is clearly transitive.) We will make repeated use of the following fact  (see \cite[Proposition 4.6]{ortega-thiel-rordam}): Say $a=x^*x$ and $xx^*\in \her(b)$. Let $x=v|x|$ be the polar decomposition of $x$ in $A^{**}$. Then $vy\in \her(b)$ for any $y\in \her (a)$.

\begin{lemma}\label{aplusnb}
Let $a,b\in A_+$ be such that $a\precsim b^{\oplus n}$ (in $A\otimes \mathcal K$). Then for all $h\in \her(a)$ there exist $z_1,w_1,\dots,z_n,w_n\in A$ and  $h'\in \her(b)$ such that $h=\sum_{i=1}^n [z_j,w_j]+h'$,   $\|z_j\|\cdot \|w_j\|\leq \|h\|$ for all $j$,  and $\|h'\|\leq n\|h\|$. 
\end{lemma}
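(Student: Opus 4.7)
The plan is to combine the polar decomposition of the witness $x$ of $a\precsim b^{\oplus n}$ with a factorization $h=h_1h_2$ having $h_1,h_2\in \her(a)$, so that a single family of $n$ matrix-like commutators simultaneously transports $h$ into $\her(b^{\oplus n})$ and extracts its diagonal entries into $\her(b)$.

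First, the setup: let $x\in A\otimes\mathcal K$ witness $a\precsim b^{\oplus n}$, so $a=x^*x$ and $xx^*\in\her(b^{\oplus n})$, and let $x=v|x|$ be the polar decomposition in $(A\otimes\mathcal K)^{**}$. Let $s_1,\dots,s_n\in M(A\otimes\mathcal K)$ be isometries with $s_i^*s_j=\delta_{ij}$ and $b^{\oplus n}=\sum_i s_ibs_i^*$, and set $p:=\sum_i s_is_i^*$. Since the range projection of $xx^*$ is dominated by that of $b^{\oplus n}$, and the latter by $p$, we get $vv^*\leq p$, hence $pv=v$. Put $\alpha_i:=s_i^*v\in(A\otimes\mathcal K)^{**}$; then $\sum_i\alpha_i^*\alpha_i=v^*pv=v^*v$, which acts as the identity on $\her(a)$.

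Next, factor $h=h_1h_2$ via the polar decomposition $h=u|h|$ in $A^{**}$, taking $h_1:=u|h|^{1/2}$ (the norm limit of $h(|h|+\tfrac1k)^{-1/2}\in A$) and $h_2:=|h|^{1/2}$; both lie in $\her(a)$ with $\|h_j\|\leq \|h\|^{1/2}$ (one checks $h_1\in\her(a)$ from $h_1^*h_1=|h|$ and $h_1h_1^*=|h^*|$). Define
\[
z_i:=h_1\alpha_i^*=h_1v^*s_i,\qquad w_i:=\alpha_ih_2=s_i^*vh_2.
\]
The note preceding the lemma gives $vh_2,vh_1^*\in\her(b^{\oplus n})$, so $w_i=s_i^*(vh_2)$ and $z_i=(vh_1^*)^*s_i$ lie in $A\otimes\mathcal K$, with $\|z_i\|\cdot\|w_i\|\leq \|h_1\|\|h_2\|\leq \|h\|$. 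Expanding the sum and using $\sum_i\alpha_i^*\alpha_i=v^*v$ and $v^*vh_2=h_2$ gives
\[
\sum_{i=1}^n[z_i,w_i]=h_1(v^*v)h_2-\sum_i\alpha_i(h_2h_1)\alpha_i^*=h-h',
\]
where $h':=\sum_{i=1}^n\alpha_i(h_2h_1)\alpha_i^*$.

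Finally, observe that $M:=v(h_2h_1)v^*=(vh_2)(h_1v^*)$ is the product of two elements of $\her(b^{\oplus n})$, hence lies in $\her(b^{\oplus n})$. Decomposing $M=\sum_{k,l}s_kM_{k,l}s_l^*$ with $M_{k,l}:=s_k^*Ms_l\in\her(b)$, one recognizes $\alpha_i(h_2h_1)\alpha_i^*=s_i^*Ms_i=M_{i,i}\in\her(b)$; hence $h'=\sum_iM_{i,i}\in\her(b)$ with $\|h'\|\leq n\|M\|\leq n\|h_2h_1\|\leq n\|h\|$. The sole genuine obstacle is that $v\in(A\otimes\mathcal K)^{**}$ lies outside $A\otimes\mathcal K$, so a priori products like $h_1v^*s_i$ and $s_i^*vh_2$ could drift into the enveloping von Neumann algebra; keeping them in $A\otimes\mathcal K$ is precisely why one must factor $h=h_1h_2$ with both factors in $\her(a)$ and why the note just before the lemma plays an essential role.
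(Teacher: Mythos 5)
Your proof is correct and follows the paper's argument essentially verbatim: the same factorization $h=h_1h_2$ with $h_1,h_2\in\her(a)$ coming from the polar decomposition of $h$, the same telescoping identity $h=h_1(v^*v)h_2=\sum_i[z_i,w_i]+\sum_i\alpha_i h_2h_1\alpha_i^*$, and the same norm estimates. The one presentational difference is that the paper realizes $b^{\oplus n}$ as $b\otimes 1_n$ inside $M_n(A)$ with $A$ sitting in the top-left corner, so that the column entries $v_j$ of $v$ and the products $h_1v_j^*,\, v_jh_2$ manifestly lie in $A$; in your set-up with full isometries $s_i\in M(A\otimes\mathcal K)$ you only assert $z_i,w_i\in A\otimes\mathcal K$, whereas the lemma requires $z_i,w_i\in A$ --- this does hold (e.g.\ $w_iw_i^*=s_i^*(vh_2)(vh_2)^*s_i\in\her(b)\subseteq A$ and $w_i^*w_i\leq h_2^2\in\her(a)\subseteq A$ force $w_i\in A$, and likewise for $z_i$), but you should say so.
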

\begin{proof}
Let us  regard $A$  as  a subalgebra of $M_n(A)$ embedded in the top left corner. The assumption $a\precsim b^{\oplus n}$ can be rephrased as $a\precsim b\otimes 1_n$ in $M_n(A)$. That is, there exists  $x\in M_n(A)$  such that
$a=xx^*$ and $xx^*\in \her(b\otimes 1_n)$. Let $x=v|x|$, with $v\in M_n(A)''$,  be the polar decomposition of $x$. Let $(v_1,\dots,v_n)$ denote the first row of $v$ (the rest of the rows are 0).
Finally, let us write $h=h_1h_2$, with $h_1,h_2\in \her(a)$ such that $\|h_1\|\cdot \|h_2\|=\|h\|$ (e.g., as in the proof of Lemma \ref{diagonalcommutator}). Then
\[
h=h_1\Big(\sum_{j=1}^n v_j^*v_j\Big)h_2=\sum_{j=1}^n [h_1v_j^*,v_jh_2]+\sum_{j=1}^n v_jh_2h_1v_j^*.
\]
Hence, the elements  $z_j=h_1v_j^*$,  $w_j=v_jh_2$, for $j=1,\dots, n$, and $h'=\sum_{j=1}^n v_jh_2h_1v_j^*$ are as required.
\end{proof}

\begin{theorem}\label{fackstechnique}
	Let $A$ be a $\Cstar$-algebra and $e_0\in A_+$ a strictly positive element. Suppose that the following are true:
	\begin{enumerate}[(i)]
		\item
		There exist an integer $L \geq 1$  and pairwise  orthogonal positive elements $e_1,e_2,\dots \in A_+$   such that  
		$e_j \preceq \bigoplus^L e_{j+1}$  for   $j=0,1,\dots$.
		
		\item
		There exist constants $C>0$, $M\in \N$, and $0<\lambda<1$, such that 
		for     all $j\in \{0,1,\dots\}$ and $h\in \her(e_j)$ such that $h\sim_{\Tr}0$ (in $\her(e_j)$), there exist $x_1,y_1,\dots,x_M,y_M\in \her(e_j)$  such that
		\[
		\|h-\sum_{i=1}^M[x_i,y_i]\|\leq \lambda \|h\|,
		\] 
		and $\|x_i\|\cdot \|y_i\|\leq C \| h \|$ for all $i$.
	\end{enumerate}
	Then $A$ has finite  commutator bounds $(\overline{M},\overline{C})$ with no approximations, where $\overline M$ and $\overline C$ depend  only 
	on $L,M,\lambda,C$.
\end{theorem}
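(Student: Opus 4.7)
The plan is to iteratively alternate between hypothesis (ii) inside $\her(e_j)$ and Lemma \ref{aplusnb} to transport the remainder into $\her(e_{j+1})$. I run (ii) enough times at each level to offset the $L$-fold expansion from the transport, so the remainder shrinks by a factor of $1/2$ each level. The resulting doubly-indexed sum of commutators then collapses into finitely many by summing across the levels $j$, exploiting the pairwise orthogonality of the $\her(e_j)$'s. Two preliminary observations: $\her(e_0)=A$ because $e_0$ is strictly positive, so we may work with $h\in \her(e_0)$ throughout; and from $e_j\precsim \bigoplus^L e_{j+1}$ it follows by induction that each $e_j$ is full (since $e_0$ is), whence $\her(e_j)$ is Morita equivalent to $A$ and trace-zeroness in $\her(e_j)$ coincides with trace-zeroness in $A$ for elements of $\her(e_j)$.

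For the descent, fix $K$ with $L\lambda^K\leq 1/2$ and set $h_0:=h$. Given $h_j\in \her(e_j)$ with $\|h_j\|\leq 2^{-j}\|h\|$ and $h_j\sim_{\Tr}0$, apply (ii) inside $\her(e_j)$ consecutively $K$ times to obtain commutators $[x_i^{(j,k)},y_i^{(j,k)}]\in \her(e_j)$ ($i=1,\dots,M$; $k=1,\dots,K$) with $\|x_i^{(j,k)}\|\cdot\|y_i^{(j,k)}\|\leq C\lambda^{k-1}\|h_j\|$, and a remainder $r_j\in \her(e_j)$ of norm at most $\lambda^K\|h_j\|$. Then apply Lemma \ref{aplusnb} to $r_j$ (with $a=e_j$, $b=e_{j+1}$) to obtain commutators $[z_l^{(j)},w_l^{(j)}]$ ($l=1,\dots,L$) of the form $z_l^{(j)}=h_1^{(j)}(v_l^{(j)})^*$, $w_l^{(j)}=v_l^{(j)}h_2^{(j)}$, plus $h_{j+1}\in \her(e_{j+1})$ with $\|h_{j+1}\|\leq L\|r_j\|\leq \|h_j\|/2$. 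Since $h_{j+1}$ differs from $r_j$ by a finite sum of commutators in $A$, $h_{j+1}\sim_{\Tr}0$ in $A$ and hence in $\her(e_{j+1})$, so the recursion continues. Unrolling yields the norm-convergent identity
\[
h = \sum_{j=0}^\infty \Big(\sum_{i=1}^M\sum_{k=1}^K [x_i^{(j,k)},y_i^{(j,k)}] + \sum_{l=1}^L [z_l^{(j)},w_l^{(j)}]\Big).
\]

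For the aggregation, first rescale each commutator so the two entries have equal norms. The pairwise orthogonality $\her(e_j)\perp \her(e_{j'})$ for $j\neq j'$ then makes $\{x_i^{(j,k)}\}_j$ mutually orthogonal (in both $xx^*$ and $x^*x$ senses) for each fixed $(i,k)$, so $X_i^{(k)}:=\sum_j x_i^{(j,k)}$ converges in norm with $\|X_i^{(k)}\|=\sup_j\|x_i^{(j,k)}\|$, similarly for $Y_i^{(k)}$, and the cross commutators vanish, yielding $[X_i^{(k)},Y_i^{(k)}]=\sum_j [x_i^{(j,k)},y_i^{(j,k)}]$. Analogous aggregation applies to the transport commutators: since $v_l^{(j)}$ has source projection in $\her(e_j)''$ and range projection in $\her(e_{j+1})''$, the products $(v_l^{(j)})^*v_l^{(j')}$ and $v_l^{(j)}(v_l^{(j')})^*$ vanish for $j\neq j'$, propagating orthogonality to the $z_l^{(j)},w_l^{(j)}$; defining $Z_l:=\sum_j z_l^{(j)}$, $W_l:=\sum_j w_l^{(j)}$ analogously gives $[Z_l,W_l]=\sum_j [z_l^{(j)},w_l^{(j)}]$. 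Thus
\[
h = \sum_{i=1}^M\sum_{k=1}^K [X_i^{(k)},Y_i^{(k)}] + \sum_{l=1}^L [Z_l,W_l],
\]
an exact identity with $\overline{M}=MK+L$ commutators, and with norm products summing to at most $\overline{C}\|h\|$ for $\overline{C}=MC/(1-\lambda)+L$, a constant depending only on $L,M,\lambda,C$.

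I expect the main obstacle to be the aggregation of the transport commutators: unlike the in-level $[x_i^{(j,k)},y_i^{(j,k)}]$'s, which live cleanly inside a single $\her(e_j)$, the $[z_l^{(j)},w_l^{(j)}]$ straddle two consecutive hereditary subalgebras, and their orthogonality across different $j$ depends on a careful accounting of the supports of the partial isometries $v_l^{(j)}$ from the proof of Lemma \ref{aplusnb}. A secondary point is the silent use of Morita equivalence to identify trace-zero conditions in $\her(e_j)$ with those in $A$, which relies on the fullness of each $e_j$.
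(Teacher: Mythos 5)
Your strategy is the same as the paper's: alternate hypothesis (ii) with Lemma \ref{aplusnb} to push the remainder into $\her(e_{j+1})$, then aggregate the doubly-indexed family of commutators using orthogonality of the $\her(e_j)$. You also make a genuine improvement over the paper at the transport step. The paper splits $\sum_j [z_l^{(j)},w_l^{(j)}]$ into odd and even $j$ because the elements $z_l^{(j)},w_l^{(j)}$ live in $\her(e_j+e_{j+1})$, and consecutive pairs overlap; you observe that the factored form $z_l^{(j)}=h_1^{(j)}(v_l^{(j)})^*$, $w_l^{(j)}=v_l^{(j)}h_2^{(j)}$ forces the cross-products to vanish for $j\neq j'$, $j,j'\geq 1$, since the middle factor is either $(v_l^{(j)})^*v_l^{(j')}$ (ranges in orthogonal $\her(e_{j+1})''$, $\her(e_{j'+1})''$) or $h_2^{(j)}h_1^{(j')}$ (supports in orthogonal $\her(e_j)$, $\her(e_{j'})$). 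This is correct and saves $L$ commutators relative to the paper's $3L+L_1M$.

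However, your aggregation has a concrete gap at $j=0$: you sum $X_i^{(k)}=\sum_{j\geq 0}x_i^{(j,k)}$ and $Z_l=\sum_{j\geq 0}z_l^{(j)}$, and you justify this with ``the pairwise orthogonality $\her(e_j)\perp\her(e_{j'})$ for $j\neq j'$''. This is false when one index is $0$: $e_0$ is strictly positive, so $\her(e_0)=A$, which is not orthogonal to any $\her(e_j)$. Consequently the in-level elements $x_i^{(0,k)},y_i^{(0,k)}\in A$ are not orthogonal to $x_i^{(j',k)},y_i^{(j',k)}\in\her(e_{j'})$ for $j'\geq 1$, so the cross commutators $[x_i^{(0,k)},y_i^{(j',k)}]$ do not vanish and the identity $[X_i^{(k)},Y_i^{(k)}]=\sum_j[x_i^{(j,k)},y_i^{(j,k)}]$ fails. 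The same thing happens for the transport: $w_l^{(j')}z_l^{(0)}=v_l^{(j')}h_2^{(j')}h_1^{(0)}(v_l^{(0)})^*$ need not vanish because $h_1^{(0)}\in A$ is not orthogonal to $h_2^{(j')}\in\her(e_{j'})$, and likewise the norm identity $\|Z_l\|=\sup_j\|z_l^{(j)}\|$ can fail since $(z_l^{(0)})^*z_l^{(j')}=v_l^{(0)}(h_1^{(0)})^*h_1^{(j')}(v_l^{(j')})^*$ need not vanish. The fix is what the paper does: do no in-level step at $j=0$, start with the level-$0$ transport, keep those $L$ commutators as a separate block, and aggregate only over $j\geq 1$. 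Combined with your orthogonality observation this yields $\overline M=2L+KM$. (You are right to flag the appeal to Morita equivalence to transfer trace-zeroness from $A$ to $\her(e_{j+1})$; this is implicitly used in the paper's proof as well and rests on the fullness of the $e_j$, which your induction establishes.)
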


\begin{proof}
	Let us choose $L_1\in \N$ such that 	$\lambda^{L_1} < 1/(2L)$.
	From hypothesis (ii) we deduce the following: 
	\begin{enumerate}
		\item[(ii')]
	for all  $j\in \{0,1,\dots\}$ and  $h \in \her(e_j)$ such that $h \sim_{\Tr} 0$ (in $\her(e_j)$), there exist
	$x_1, y_1,\dots,x_{L_1M},y_{L_1M} \in \her(e_j)$  such that
	\begin{equation*}
	\| h - \sum_{i=1}^{L_1 M} [x_i, y_i] \| < \lambda^{L_1} \| h \| < \frac{1}{2 L} \| h \|,
	\end{equation*}
	and $\| x_i \|, \| y_i \| \leq C^{\frac 1 2} \| h \|^{\frac 1 2}$ for all $i$.  
	\end{enumerate}
		
Let $h  \in A$ be such that $h\sim_{\Tr}0$. By hypothesis (i) and Lemma \ref{aplusnb}, we have 
\[
h = \sum_{j=1}^{L} [z_j,w_j] + h_1,
\]
where $h_1\in \her(e_1)$ and $\|h_1\|\leq L\|h\|$. By (ii') above, applied in the hereditary algebra $\her(e_1)$, there exist $x_1^{(1)},y_1^{(1)},\dots,x_{L_1M}^{(1)},y_{L_1M}^{(1)}\in \her(e_1)$  and $h_1'\in \her(e_1)$, such that
\[
h_1=\sum_{i=1}^{L_1M} [x_i^{(1)},y_i^{(1)}]+h_1',
\]
 and $\|h_1'\|\leq \frac{1}{2L}\|h_1\|\leq \frac{\|h\|}{2}$. Again by hypothesis (i) and Lemma \ref{aplusnb}, 
we have 
\[
h_1'=\sum_{j=1}^{L} [z_j^{(1)},w_j^{(1)}] + h_2,\]
where $h_2\in \her(e_2)$, $z_j^{(1)},w_j^{(1)}\in \her(e_1+e_2)$ for all $j$,  and $\|h_2\|\leq L\|h_1'\|$. Applying (ii') in $\her(e_2)$, we get
$h_2=\sum_{i=1}^{L_1M} [x_i^{(2)},y_i^{(2)}] + h_2'$,
with  $\|h_2'\|\leq \frac{\|h_1'\|}{2}\leq \frac{\|h\|}{4}$. Continuing in this way, we construct, for each $n\in \N$, elements  
\begin{enumerate}
	\item
	$h_n,h_n'\in \her(e_n)$ such that $\|h_n'\|\leq  \frac 1 {2^n}\|h\|$, $\|h_n\|\leq \frac{L}{2^{n-1} }\|h\|$,
	\item
$x_1^{(n)},y_1^{(n)},\dots,x_{L_1M}^{(n)},y_{L_1M}^{(n)}\in \her(e_n)$,  such that 
$\|x_i^{(n)}\|,\|y_{i}^{(n)}\|\leq C^\frac 1 2\|h_n\|^\frac 1 2$ for all $i$, and 
\[
h_n=\sum_{i=1}^{L_1M} [x_i^{(n)},y_i^{(n)}]+h_n',
\] 
\item 
$z_1^{(n)},w_1^{(n)},\dots,z_L^{(n)},w_L^{(n)}\in \her(e_n+e_{n+1})$, such that 
$\|z_j^{(n)}\|,\|w_j^{(n)}\|\leq  \|h_n'\|^{\frac 1 2}$ for all $j$, and 
\[
h_n'=\sum_{j=1}^{L} [z_j^{(n)},w_j^{(n)}] + h_{n+1}.
\]
\end{enumerate} 
It follows that 
\[
h_1=
\sum_{k=1}^n\sum_{i=1}^{L_1M}[x_i^{(k)},y_i^{(k)}]+
\sum_{k=1}^n\sum_{j=1}^{L}[z_j^{(k)},w_j^{(k)}]+h_{n+1}.
\]
We can gather terms belonging to orthogonal hereditary subalgebras and define
\begin{align*}
X_i =\sum_{n=1}^\infty x_i^{(n)},\quad
Y_i =\sum_{n=1}^\infty y_i^{(n)},
\end{align*}
for $i=1,\dots,L_1M$, and
\begin{align*}
&& Z_{j,0} &=\sum_{n\hbox{ \small{odd}}}^\infty z_j^{(n)},  &   Z_{j,1} &=\sum_{n\hbox{ \small{even}}}^\infty z_j^{(n)},&\\
&& W_{j,0}  &=\sum_{n\hbox{ \small{odd}}}^\infty w_j^{(n)}, &  W_{j,1} &=\sum_{n\hbox{ \small{even}}}^\infty w_j^{(n)},&
\end{align*}
for $j=1,\dots,L$.  Note that the terms in the series defining the elements $X_i,Y_,Z_{j,k},W_{j,k}$ are pairwise orthogonal. Also, the norm estimates on the elements $x_i^{(n)},y_i^{(n)},z_j^{(n)},w_j^{(n)}$
guarantee that these series converge. Furthermore, it is clear that norm estimates on   $X_i,Y_,Z_{j,k},W_{j,k}$ can be obtained from those on $x_i^{(n)},y_i^{(n)},z_j^{(n)},w_j^{(n)}$.  We have
\[
h=\sum_{j=1}^{L} [z_j,w_j]+\sum_{i=1}^{L_1M} [X_i,Y_i]+\sum_{j=1}^L[Z_{j,0},W_{j,0}]+\sum_{j=1}^L[Z_{j,1},W_{j,1}].
\]
This shows that $A$ has commutator bounds $(3L+L_1M,C')$ with no approximations, for some  $C'$.
\end{proof}


\section{Strict comparison of positive elements}\label{strictcomparison}
Here we review and explore the strict comparison of positive elements by traces and 2-quasitraces. Some of these results will be used in the proof of Theorem \ref{mainpure} in the next section.


Let us start by recalling the definition of the Cuntz semigroup.
Let $A$ be a $\Cstar$-algebra. Let  $a,b\in (A\otimes \mathcal K)_+$. Let us write $a\precsim_{\Cu}b$ if 
$d_n^*bd_n\to a$ for some $d_n\in A\otimes \mathcal K$. In this case we say that $a$ is Cuntz smaller than $b$.
Let us write $a\sim_{\Cu}b$ if $a\precsim_{\Cu}b$ and $b\precsim_{\Cu}a$, in which case we say that $a$ and $b$ are Cuntz equivalent.
Let $[a]$ denote the Cuntz class of $a\in (A\otimes \mathcal K)_+$. 

The Cuntz semigroup of $A$, denoted by $\Cu(A)$, is defined as the 
quotient set $(A\otimes \mathcal K)/\!\!\sim_{\Cu}$, endowed the following order and addition:
$[a]\leq [b]$ if $a\precsim_{\Cu}b$ and  $[a]+[b]=[a\oplus b]$, with the direct sum  $a\oplus b\in (A\otimes \mathcal K)_+$ as defined in the previous section.  The reader is referred to \cite{ara-perera-toms} and \cite{antoine-perera-thiel} for the basic theory of the Cuntz semigroup (some of which will be used below).

Let us denote by $\Text(A)$  the cone of lower semicontinuous $[0,\infty]$-valued traces on $A$; i.e., 
the lower semicontinuous maps $\tau\colon A_+\to [0,\infty]$ that are additive, homogeneous, map 0 to 0, and satisfy that $\tau(x^*x)=\tau(xx^*)$ for all $x\in A$. 
Let us denote by $\QT(A)$ the lower semicontinuous $[0,\infty]$-valued 2-quasitraces on $A_+$.  
Traces and 2-quasitraces extend uniquely to traces and quasitraces on $(A\otimes \mathcal K)_+$,
and we shall regard them as defined on this domain (see \cite[Remark 2.27 (viii)]{blanchard-kirchberg}).  
Recall (from the previous section) that we denote by $\Tb(A)$ the convex set of traces on $A$ of norm at most 1.

A topology on $\QT(A)$ can be defined as follows:
Let $(\tau_\lambda)_\lambda$  be a net  in $\QT(A)$ and $\tau\in \QT(A)$. Let us say that $\tau_{\lambda}\to \tau$
if for any $a\in (A\otimes \mathcal K)_+$ and $\epsilon>0$   we have  
\begin{align}\label{topology}
\limsup_\lambda \tau_\lambda((a-\epsilon)_+)\leq \tau(a)\leq \liminf_{\lambda} \tau_{\lambda}(a).
\end{align}
In this way $\QT(A)$ is a compact Hausdorff space and $\Text(A)$ and $\Tb(A)$ are  closed subsets  of $\QT(A)$
(see \cite[Subsections 3.2 and 4.1]{ers}) .

The dimension function associated to $\tau\in \QT(A)$ is defined as 
\[
d_\tau(a)=\lim_n \tau(a^{1/n})=\|\tau\mid_{\her(a)}\|,
\] 
for all $a\in (A\otimes \mathcal K)_+$.
The value $d_\tau(a)$ depends only on the Cuntz class of the positive element $a$. Thus, by a slight abuse of notation, we also  write $d_\tau([a])$.

The ordered semigroup 
$\Cu(A)$ is called \emph{almost unperforated} if 
\[
(k+1)x\leq ky\Rightarrow x\leq y
\]
 for all $k\in \N$  and    $x,y\in \Cu(A)$. 
The ordered semigroup $\Cu(A)$ is called \emph{almost divisible} if
for all $n\in \N$, $x\in \Cu(A)$ and $x'\ll x$ (i.e., $x'$ compactly contained in $x$), there exists $y\in \Cu(A)$ such that 
\[
ny\leq x\hbox{ and }x'\leq (n+1)y.
\] 
The $\Cstar$-algebra $A$ is called \emph{pure} if $\Cu(A)$ is both almost unperforated and almost divisible. By \cite{rordamZ}, $\Cstar$-algebras that absorb tensorially the Jiang-Su algebra are pure. There are, however, tensorially prime pure $\Cstar$-algebras.

It is shown in \cite[Proposition 6.2]{ers} (and in  \cite[Corollary 4.6]{rordamZ} for simple $\Cstar$-algebras) that   almost unperforation in $\Cu(A)$ is equivalent to the property of strict comparison of positive elements by 2-quasitraces. We consider here the following generalization of the latter property:

\begin{definition}\label{strictbyK}
Let $A$ be a $\Cstar$-algebra and   $K\subseteq \QT(A)$   a compact subset. Let us say that $A$ has strict comparison of positive elements by 2-quasitraces in $K$
if   $d_\tau(a)\leq (1-\gamma)d_\tau(b)$  for all $\tau\in K$ and some $\gamma>0$ implies that $[a]\leq [b]$ for all $a,b\in (A\otimes \mathcal K)_+$. 
 \end{definition}

For $K=\QT(A)$, this notion agrees with the strict comparison of positive elements mentioned above. Another  case of interest is $K=\Text(A)$. In this case we say that $A$ has strict comparison of positive elements by traces. If $A$ is unital or more generally   $\mathrm{Prim}(A)$  
is compact, it is also interesting to consider the property of strict comparison by traces  restricted to full positive elements only  (i.e., those generating $A$ as a two-sided ideal). 
Let us define this more formally:

\begin{definition}
Let $A$ be a $\Cstar$-algebra such that $\mathrm{Prim}(A)$ is  compact. Let us say that $A$ has strict comparison of full positive elements by traces if $d_\tau(a)\leq (1-\gamma)d_\tau(b)$ for all $\tau\in \Text(A)$ and some $\gamma>0$ implies that $[a]\leq [b]$ for all 
  full positive elements $a,b\in (A\otimes \mathcal K)_+$.
\end{definition}

\begin{lemma}\label{epsilondelta}
Let $K\subseteq \QT(A)$ be compact and $a,b\in (A\otimes \mathcal K)_+$.
 Suppose that $d_\tau(a)\leq (1-\gamma)d_\tau(b)$ for all $\tau\in K$ and some $\gamma>0$.  Then for each $\epsilon>0$
there exists $\delta>0$ such that 
\[
d_\tau((a-\epsilon)_+)\leq \Big(1-\frac \gamma 2\Big)d_\tau((b-\delta)_+),
\]
for all $\tau\in K$.
\end{lemma}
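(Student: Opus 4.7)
The plan is a compactness argument. For each $\tau_0 \in K$, I aim to construct an open neighborhood $V(\tau_0) \subseteq \QT(A)$ of $\tau_0$ together with $\delta(\tau_0) > 0$ such that
\[
d_\tau((a-\epsilon)_+) \leq (1-\gamma/2)\, d_\tau((b - \delta(\tau_0))_+)
\]
for every $\tau \in V(\tau_0) \cap K$. Extracting a finite subcover of $K$ and setting $\delta := \min_i \delta(\tau_{0,i})$ then gives the desired uniform $\delta$, since $d_\tau((b-\delta)_+)$ is monotone non-increasing in $\delta$.

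The choice of $\delta(\tau_0)$ is guided by the hypothesis: since
\[
d_{\tau_0}((a-\epsilon)_+) \leq d_{\tau_0}(a) \leq (1-\gamma)\, d_{\tau_0}(b) = (1-\gamma) \sup_{\delta > 0} d_{\tau_0}((b-\delta)_+),
\]
and $(1-\gamma)/(1-\gamma/2) < 1$, I can pick $\delta(\tau_0)$ so that $(1-\gamma/2)\, d_{\tau_0}((b-\delta(\tau_0))_+)$ strictly exceeds $d_{\tau_0}((a-\epsilon)_+)$ by a controlled margin; the degenerate cases $d_{\tau_0}(b) = 0$ (forcing $d_{\tau_0}((a-\epsilon)_+)=0$) and $d_{\tau_0}(b) = \infty$ are handled separately.

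The main obstacle is extending this strict pointwise inequality to a neighborhood. The dimension function $\tau \mapsto d_\tau((a-\epsilon)_+)$ is lower semicontinuous on $\QT(A)$ (being a supremum of lower semicontinuous maps $\tau \mapsto \tau(((a-\epsilon)_+)^{1/n})$) but \emph{not} upper semicontinuous, so it admits no direct upper bound in a neighborhood. The trick is to exploit the functional-calculus inequality $\chi_{(\epsilon,\infty)}(a) \leq \frac{2}{\epsilon}(a-\epsilon/2)_+$, which yields
\[
d_\tau((a-\epsilon)_+) \leq \tfrac{2}{\epsilon}\,\tau((a-\epsilon/2)_+),
\]
and then to apply the defining inequality of the $\QT(A)$-topology with $c = (a-\epsilon/4)_+$ and $\eta = \epsilon/4$ to obtain
\[
\limsup_\lambda \tau_\lambda((a-\epsilon/2)_+) \leq \tau_0((a-\epsilon/4)_+),
\]
supplying an upper-semicontinuity-type bound on the right-hand side. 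Combined with the lower semicontinuity of $\tau \mapsto d_\tau((b-\delta(\tau_0))_+)$, this shrinks $V(\tau_0)$ enough for the inequality to survive. The hard part is that $\tfrac{2}{\epsilon}\tau_0((a-\epsilon/4)_+)$ can exceed $d_{\tau_0}((a-\epsilon)_+)$ substantially, so $\delta(\tau_0)$ must be chosen small enough to absorb this slack; ensuring this uniformly across all finite/infinite configurations of $d_{\tau_0}(a)$ and $d_{\tau_0}(b)$ requires a careful case analysis.
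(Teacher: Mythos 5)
Your upper bound on $d_\tau((a-\epsilon)_+)$ is too crude for the comparison you need, and shrinking $\delta(\tau_0)$ cannot repair it. The chain $d_\tau((a-\epsilon)_+) \le \tfrac{2}{\epsilon}\,\tau((a-\epsilon/2)_+)$, followed by the topology estimate $\limsup_\lambda \tau_\lambda((a-\epsilon/2)_+) \le \tau_0((a-\epsilon/4)_+)$, forces you to compare $\tfrac{2}{\epsilon}\,\tau_0((a-\epsilon/4)_+)$ against $(1-\tfrac\gamma2)\,d_{\tau_0}((b-\delta(\tau_0))_+)$. But the former is not majorized by $d_{\tau_0}(a)$ and can strictly exceed $(1-\tfrac\gamma2)\,d_{\tau_0}(b)$; since $d_{\tau_0}((b-\delta)_+)\le d_{\tau_0}(b)$ for every $\delta>0$, no choice of $\delta(\tau_0)$ then works. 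Concretely, take $A=C_0((0,1])$, $a=t\otimes e_{11}$, $b=t\otimes(e_{11}+e_{22})$ in $(A\otimes\mathcal K)_+$ (so $\gamma=\tfrac12$ is allowed) and $\tau_0=\mathrm{ev}_1$: then $d_{\tau_0}(a)=1$, $d_{\tau_0}(b)=2$, $(1-\tfrac\gamma2)\,d_{\tau_0}(b)=\tfrac32$, while $\tfrac2\epsilon\,\tau_0((a-\epsilon/4)_+)=\tfrac{2(1-\epsilon/4)}{\epsilon}>\tfrac32$ for every $\epsilon<1$. The conclusion of the lemma still holds at $\tau_0$, so it is your bound, not the statement, that fails. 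The missing idea is to replace the unbounded function $s\mapsto\tfrac2\epsilon(s-\epsilon/2)_+$ by a $[0,1]$-valued continuous $g$ with $\chi_{(\epsilon,\infty)}\le g$ and $g$ supported in $(\epsilon/2,\infty)$ (e.g.\ $g(s)=\min(1,\tfrac2\epsilon(s-\epsilon/2)_+)$). This gives $d_\tau((a-\epsilon)_+)\le\tau(g(a))\le d_\tau(a)$, so the upper bound stays coupled to $d_\tau(a)$ and hence to $(1-\gamma)\,d_\tau(b)$; the $\limsup$ step must then also be run with a $[0,1]$-valued perturbation of $g(a)$ (writing $g(a)=(g(a)+\eta\,g_2(a)-\eta)_+$ with $g_2\le1$), landing at $(1+\eta)\,d_{\tau_0}(a)$ rather than $\tfrac2\epsilon\,\tau_0((a-\epsilon/4)_+)$.

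Note also that the paper organizes the compactness differently: instead of point-by-point neighborhoods with individual $\delta(\tau_0)$'s, it works with a single global set, invoking the proof of Lemma 5.11 of Elliott--Robert--Santiago for the inclusion of $\overline{\{\tau\in K: d_\tau((a-\epsilon)_+)>1\}}$ in $\{\tau\in K: (1-\tfrac\gamma2)d_\tau(b)>1\}$, and then covers the compact left side by the increasing open sets $\{\tau: (1-\tfrac\gamma2)d_\tau((b-1/n)_+)>1\}$ to extract one $n$. The $[0,1]$-valued approximation of $\chi_{(\epsilon,\infty)}$ is exactly what makes that closure inclusion go through; your proposal omits it.
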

\begin{proof}
As shown in the  proof of \cite[Lemma 5.11]{ers},  we have
\begin{align*}
\overline{\{\tau\in K\mid d_{\tau}((a-\epsilon)_+)>1\}} \subseteq 
\{\tau\in K\mid \Big(1-\frac \gamma 2\Big)d_{\tau}(a)>1\}.
\end{align*}
The left side is compact while the right side is covered by the open sets  
\[
\{(\tau\in K\mid \Big(1-\frac \gamma 2\Big)d_\tau\Big((b-\frac 1 n)_+\Big)>1\}, \quad n=1,2,\dots.
\]
Thus, one of these open sets covers $\{\tau\in K\mid d_{\tau}((a-\epsilon)_+)>1\}$. This proves the lemma. 
\end{proof}

\begin{lemma}\label{equalontraces}
Let $A$ be a $\Cstar$-algebra with strict comparison of positive elements by $K$, where   $K\subseteq \QT(A)$ is compact. Let $a,b\in (A\otimes \mathcal K)_+$. 
\begin{enumerate}[(i)]
\item
If
$d_\tau(a)\leq d_\tau(b)$ for all $\tau\in K$ then $d_\tau(a)\leq d_\tau(b)$ for all $\tau\in \QT(A)$.
\item
If  $\tau(a)\leq \tau(b)$ for all $\tau\in K$ then $\tau(a)\leq \tau(b)$ for all $\tau\in \QT(A)$.
\end{enumerate}
\end{lemma}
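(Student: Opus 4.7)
The plan is to prove (i) by a scaling argument that invokes strict comparison by $K$ directly, and then deduce (ii) from (i) by discretising the integral formula $\tau(x) = \int_0^\infty d_\tau((x-t)_+)\,dt$ using Riemann sums of the decreasing function $t \mapsto d_\tau((x-t)_+)$.

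For (i), fix $n \in \N$. The hypothesis $d_\tau(a) \leq d_\tau(b)$ for $\tau \in K$ rewrites, after multiplying by $n$, as
\[
d_\tau(a^{\oplus n}) \leq \Bigl(1 - \tfrac{1}{n+1}\Bigr)\, d_\tau(b^{\oplus(n+1)}) \quad (\tau \in K),
\]
so strict comparison by $K$ with $\gamma = 1/(n+1)$ produces $[a^{\oplus n}] \leq [b^{\oplus(n+1)}]$ in $\Cu(A)$. Evaluating this Cuntz comparison via any $\tau \in \QT(A)$ yields $n\,d_\tau(a) \leq (n+1)\,d_\tau(b)$; letting $n \to \infty$ (the case $d_\tau(b) = \infty$ being trivial) gives $d_\tau(a) \leq d_\tau(b)$ on all of $\QT(A)$.

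For (ii), for each $L \in \N$ form the direct sums
\[
A_L := \bigoplus_{k=1}^{\lfloor L\|a\|\rfloor}(a - k/L)_+, \qquad B_L := \bigoplus_{k=0}^{\lceil L\|b\|\rceil - 1}(b - k/L)_+
\]
in $(A \otimes \mathcal K)_+$. Monotonicity of $t \mapsto d_\tau((x-t)_+)$ gives $\tfrac{1}{L} d_\tau(A_L) \leq \tau(a)$ (right-endpoint lower Riemann sum) and $\tau(b) \leq \tfrac{1}{L} d_\tau(B_L)$ (left-endpoint upper Riemann sum). The hypothesis $\tau(a) \leq \tau(b)$ on $K$ therefore chains into $d_\tau(A_L) \leq d_\tau(B_L)$ for every $\tau \in K$; by part (i) this extends to all $\tau \in \QT(A)$. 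Letting $L \to \infty$ collapses the two Riemann sums to $\tau(a)$ and $\tau(b)$, delivering the claim.

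The main technical care lies in the final limit when $\tau \in \QT(A)$ satisfies $d_\tau(b) = \infty$ but $\tau(b) < \infty$ (e.g.\ $b$ trace-class of infinite rank in $K(H)$): the $k=0$ term in $B_L$ then forces $d_\tau(B_L) = \infty$, and one must replace the partition $\{k/L\}$ by the shifted partition $\{\delta + k/L\}$ for a small $\delta > 0$ before passing to the limit in $L$ and then in $\delta$; the resulting error $\int_0^\delta d_\tau((b-t)_+)\,dt$ vanishes as $\delta \to 0$ precisely because $\tau(b) < \infty$.
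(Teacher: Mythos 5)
Your proof of part (i) is correct and coincides with the paper's.

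For part (ii), the Riemann--sum device you describe is essentially the ``realification'' of $\Cu(A)$ that the paper invokes (via the identity $\tau(c)=\int_0^{\|c\|}d_\tau((c-t)_+)\,dt$), so the general strategy is the right one. However, there is a genuine gap in the concluding limit, and the fix you sketch in your last paragraph does not close it. You correctly identify the problematic case as a quasitrace $\tau\in\QT(A)$ with $d_\tau(b)=\infty$ but $\tau(b)<\infty$: for such $\tau$ the $k=0$ term of $B_L$ makes $d_\tau(B_L)=\infty$ for every $L$, so the inequality $d_\tau(A_L)\le d_\tau(B_L)$ yields nothing. Your proposed repair is to shift the partition for $b$ to $\{\delta+k/L\}$, but then the chain on $K$ breaks: for $\tau\in K$ one only gets $\tfrac1L d_\tau(B_{L,\delta})\ge\tau((b-\delta)_+)$, and the hypothesis furnishes $\tau(a)\le\tau(b)$, \emph{not} $\tau(a)\le\tau((b-\delta)_+)$. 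So the inequality $d_\tau(A_L)\le d_\tau(B_{L,\delta})$ is no longer available on $K$, and part (i) cannot be applied.

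The missing ingredient is the compactness of $K$, used in the paper exactly at this point (the argument of Lemma \ref{epsilondelta}): from $\tau(a)\le\tau(b)$ on $K$ one passes, for each $\epsilon>0$, to $\tau((a-\epsilon)_+)\le\tau((b-\delta)_+)$ for all $\tau\in K$ and a \emph{single} $\delta>0$ depending on $\epsilon$. Once that is in hand your shifted-partition idea does work, for then $d_\tau((b-\delta)_+)\le\tau(b)/\delta<\infty$ whenever $\tau(b)<\infty$, so the $k=0$ term is finite, the upper Riemann sum (built from cut-downs at $\delta+k/L$) collapses to $\tau((b-\delta)_+)\le\tau(b)$ as $L\to\infty$, and letting $\epsilon\to0$ gives the claim. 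Without that compactness step, however, there is no way to recover a uniform $\delta$, and the proof as written does not go through.
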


\begin{proof}
(i) Let $k\in \N$. Then $d_\tau(a\otimes 1_k)\leq (1-\frac{1}{k+1})d_{\tau}(b\otimes 1_{k+1})$ for all $\tau\in K$.
 Since $A$ has strict comparison by 2-quasitraces in $K$, $a\otimes 1_k\precsim b\otimes 1_{k+1}$, which in turn implies that $d_{\tau}(a)\leq (1+\frac 1 {k+1})d_{\tau}(b)$ for all  $\tau\in \QT(A)$. Letting $k\to \infty$ we get that $d_\tau(a)\leq d_\tau(b)$ for all 
 $\tau\in \QT(A)$, as desired.
 
 (ii) Let $\epsilon>0$. By a compactness argument as in the proof of Lemma \ref{epsilondelta}, we find that there exists $\delta>0$ such that $\tau((a-\epsilon)_+)\leq \tau((b-\delta)_+)$ for all $\tau\in K$ (see  \cite[Propositions 5.1 and 5.3]{ers}). Let $f_\epsilon,g,g_\delta\colon \QT(A)\to [0,\infty]$ be as follows: $f_\epsilon(\tau)=\tau((a-\epsilon)_+)$, $g(\tau)=\tau(b)$, and $g_\delta(\tau)=\tau((b-\delta)_+)$
for all $\tau$. The functions $f_\epsilon$ and $g$ (and also $g_\delta$) belong to the realification of $\Cu(A)$, as defined in \cite{riesz}. That is,  $f_\epsilon=f_n\uparrow$ and  $g=g_n\uparrow$, where  $f_n(\tau)=\frac{1}{r_n}d_\tau([a_n])$ and $g_n(\tau)=\frac{1}{s_n}d_\tau([b_n])$, for some $[a_n],[b_n]\in \Cu(A)$ and $r_n,s_n\in \N$ for $n=1,\dots$. (This follows from the fact that
\[
\tau(c)=\int_0^{\|c\|}d_\tau([(c-t)_+])dt
\] 
for all $c\in (A\otimes \mathcal  K)_+$ and $\tau\in \QT(A)$.)  By \cite[Propositions 5.1 and 5.3]{ers}, the function $g_\delta$ is way below $g$, so that $g_\delta\leq g_n$ for some $n$.  Hence, $f_m(\tau)\leq g_n(\tau)$ for all $\tau\in K$ and all $m\in \N$; 
i.e.,  $\frac{1}{r_m}d_\tau(a_m)\leq \frac{1}{s_n}d_\tau(b_n)$ for all
$\tau\in K$ and $m$.
By (i), this same inequality holds for all $\tau\in \QT(A)$; whence,  
 $f_m\leq g$ for all $m$. Passing to the supremum over $m$ we get
$ \tau(a-\epsilon)_+\leq \tau(b)$ for all $\tau\in \QT(A)$ and $\epsilon>0$. Now passing to the supremum over all $\epsilon>0$ we get  $\tau(a)\leq \tau(b)$ for all $\tau\in \QT(A)$, as desired.
\end{proof}

There is a version of the previous lemma for strict comparison of full positive elements by  traces:
\begin{lemma}\label{equalontracesbounded}
		Let $A$ be a $\Cstar$-algebra with $\mathrm{Prim} A$  compact and with strict comparison of full positive elements by traces. Let $a,b\in (A\otimes \mathcal K)_+$ be full positive elements.
		\begin{enumerate}[(i)]
			\item
			If	$d_\tau(a)\leq d_\tau(b)$ for all $\tau\in \Text(A)$ then $d_\tau(a)\leq d_\tau(b)$ for all $\tau\in \QT(A)$.
			\item
			 If  $\tau(a)\leq \tau(b)$ for all $\tau\in\Text(A)$ then $\tau(a)\leq \tau(b)$ for all $\tau\in \QT(A)$.
		\end{enumerate}
	\end{lemma}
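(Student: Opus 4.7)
The plan is to mirror the proof of Lemma \ref{equalontraces}, adjusting every step to respect the full-element restriction. Compactness of $\mathrm{Prim}(A)$ enters through the observation that if $a \in (A \otimes \mathcal K)_+$ is full, the lower semicontinuous function $\pi \mapsto \|\pi(a)\|$ attains a positive minimum $c_a > 0$ on $\mathrm{Prim}(A)$; consequently, every truncation $(a-\eta)_+$ with $\eta < c_a$ is again full.

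For (i), assume $a,b$ are full and $d_\tau(a) \leq d_\tau(b)$ on $\Text(A)$. For each $k \in \N$ the amplifications $a \otimes 1_k$ and $b \otimes 1_{k+1}$ are still full, and
\[
d_\tau(a \otimes 1_k) = k\, d_\tau(a) \leq k\, d_\tau(b) = \Big(1 - \frac{1}{k+1}\Big) d_\tau(b \otimes 1_{k+1})
\]
for every $\tau \in \Text(A)$. Strict comparison of full positive elements by traces then yields $a \otimes 1_k \precsim b \otimes 1_{k+1}$ in $\Cu(A)$, whence $k\, d_\tau(a) \leq (k+1) d_\tau(b)$ for all $\tau \in \QT(A)$. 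Letting $k \to \infty$ gives the conclusion.

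For (ii), given $\tau(a) \leq \tau(b)$ on $\Text(A)$ and $\epsilon > 0$, a compactness argument paralleling Lemma \ref{epsilondelta} but adapted to bounded traces via \cite[Propositions 5.1 and 5.3]{ers} produces $\delta > 0$ with $\tau((a-\epsilon)_+) \leq \tau((b-\delta)_+)$ on $\Text(A)$; for $\epsilon$ and $\delta$ smaller than the fullness thresholds $c_a, c_b$, the truncations $(a-\epsilon)_+$ and $(b-\delta)_+$ remain full. Using the identity $\tau(c) = \int_0^{\|c\|} d_\tau((c-t)_+)\, dt$, we then write $\tau((a-\epsilon)_+) = \sup_m f_m$ and $\tau(b) = \sup_n g_n$ as increasing suprema of elementary functions $f_m(\tau) = \frac{1}{r_m} d_\tau([a_m])$, $g_n(\tau) = \frac{1}{s_n} d_\tau([b_n])$ in the realification of $\Cu(A)$, where $a_m$ and $b_n$ arise as Riemann-sum direct sums of truncations $((a-\epsilon)_+ - t)_+$ and $(b-t)_+$ respectively. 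Since $g_\delta$ is way below $g$, some $n$ satisfies $g_\delta \leq g_n$, and so $f_m \leq g_n$ on $\Text(A)$ for every $m$. Applying part (i) to (appropriate amplifications of) the full elements $a_m$ and $b_n$ transfers this inequality to all of $\QT(A)$; successive suprema over $m$, $n$, and $\epsilon$ then yield $\tau(a) \leq \tau(b)$ on $\QT(A)$.

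The hardest step will be arranging that the representatives $a_m, b_n$ from the realification are full, since only then may part (i) be invoked. We resolve this by restricting attention to indices $m, n$ large enough that the Riemann-sum decomposition contains some summand $((a-\epsilon)_+ - t)_+$ (respectively $(b-t)_+$) with $t$ below the corresponding fullness threshold; such a summand is full by the $\mathrm{Prim}(A)$-compactness observation above, and any direct sum in $\Cu(A)$ containing a full summand is itself full. A little extra bookkeeping — for example, approximating the norms $\|(a-\epsilon)_+\|$ and $\|b\|$ by rationals — ensures that the constants $r_m, s_n$ can be cleared to integers, so the amplifications $a_m \otimes 1_{s_n}$ and $b_n \otimes 1_{r_m}$ demanded by part (i) make literal sense.
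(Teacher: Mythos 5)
Your proof is correct and follows the paper's intended route exactly: the paper's proof of this lemma is a one-line remark that the argument for Lemma \ref{equalontraces} carries over once the functional-calculus cut-downs are arranged to remain full, and that is precisely what you carry out. The details you supply — the positive minimum of $\pi\mapsto\|\pi(a)\|$ via compactness of $\mathrm{Prim}(A)$, the observation that a direct sum in $\Cu(A)$ with one full summand is full, and the arrangement that the Riemann-sum representatives $a_m$, $b_n$ and their amplifications are full — are exactly the "obvious modifications" the paper alludes to.
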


Note: Since $b$ is full, the inequality $d_\tau(a)\leq d_\tau(b)$ need only be verified on densely finite traces, for otherwise $d_\tau(b)=\infty$.

\begin{proof}
The same proof as Lemma \ref{equalontraces}, with the obvious modifications, works here. When taking functional calculus cut-downs $(a-\epsilon)_+$ and $(b-\delta)_+$, we must take care to choose them so that they are still full (which is possible by the compactness of $\mathrm{Prim}(A)$).  
\end{proof}

\begin{theorem}\label{strictbytraces}
Let $A$ be a $\Cstar$-algebra.
\begin{enumerate}[(i)]
\item
If $A$ has strict comparison of positive elements by traces  then every lower semicontinuous 2-quasitrace on $A$ is a trace.

\item
If $Prim A$ is compact  and $A$ has strict comparison of full positive elements by  traces, then every densely finite lower semicontinuous 2-quasitrace on $A$ is a trace.
\end{enumerate}
\end{theorem}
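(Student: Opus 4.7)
The plan is to reduce the claim that a lower semicontinuous $2$-quasitrace $\eta$ is a trace to the additivity identity $\eta(a+b)=\eta(a)+\eta(b)$ for $a,b\in A_+$ (the remaining trace axioms already hold by the definition of $2$-quasitrace), and to establish this identity via a diagonal trick. Since $\eta$ extends to a $2$-quasitrace on $A\otimes\mathcal K$ that is automatically additive on orthogonal positive elements, one has
\[
\eta(\mathrm{diag}(a+b,0))=\eta(a+b)\quad\text{and}\quad \eta(\mathrm{diag}(a,b))=\eta(a)+\eta(b).
\]
On the other hand, the two diagonal elements $X=\mathrm{diag}(a+b,0)$ and $Y=\mathrm{diag}(a,b)$ of $M_2(A)$ satisfy $\tau(X)=\tau(Y)=\tau(a)+\tau(b)$ for every $\tau\in\Text(A)$ by linearity. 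So the whole problem becomes upgrading this trace-level identity to all of $\QT(A)$, which is exactly what the preceding comparison lemmas accomplish.

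For part (i), I would apply Lemma \ref{equalontraces}(ii) with $K=\Text(A)$ (compact in $\QT(A)$, and the one for which strict comparison is assumed) to both inequalities $\tau(X)\leq\tau(Y)$ and $\tau(Y)\leq\tau(X)$. This gives $\eta(X)=\eta(Y)$ for every $\eta\in\QT(A)$, which by the reduction above is the required additivity.

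For part (ii), the same strategy requires $X$ and $Y$ to be full before Lemma \ref{equalontracesbounded}(ii) can be invoked. Since neither is full in general, I would absorb fullness by enlarging to $3\times 3$ diagonals using an auxiliary full $c\in A_+$ with $\eta(c)<\infty$:
\[
X=\mathrm{diag}(a+b,\,0,\,\epsilon c),\qquad Y=\mathrm{diag}(a,\,b,\,\epsilon c).
\]
Both are now full (via the third entry), the identity $\tau(X)=\tau(Y)$ on $\Text(A)$ still holds, and Lemma \ref{equalontracesbounded}(ii) promotes it to all of $\QT(A)$. Expanding yields $\eta(a+b)+\epsilon\eta(c)=\eta(a)+\eta(b)+\epsilon\eta(c)$, which cancels to the desired additivity whenever $\eta(a),\eta(b)$ are finite; if one of them is infinite, monotonicity of $\eta$ along $a\leq a+b$ (which comes from $a\precsim_{\Cu}a+b$) already forces both sides to equal $\infty$.

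The only nontrivial technical point is securing $c$. Compactness of $\mathrm{Prim}(A)$ provides some full positive $c_0$; dense-finiteness of $\eta$ furnishes a norm approximant $c$ with $\eta(c)<\infty$; and the cut-down $(c-\delta)_+$ for sufficiently small $\delta>0$ remains full (by the same compactness argument used in the proof of Lemma \ref{equalontracesbounded}) while still satisfying $\eta((c-\delta)_+)\leq\eta(c)<\infty$. With $c$ in hand, the rest is the routine scalar cancellation indicated above.
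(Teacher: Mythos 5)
Your part (i) is essentially identical to the paper's proof: the $2\times 2$ diagonal trick $\mathrm{diag}(a+b,0)$ versus $\mathrm{diag}(a,b)$ followed by Lemma \ref{equalontraces}(ii). Your part (ii), however, takes a genuinely different route. The paper first applies the same diagonal argument via Lemma \ref{equalontracesbounded}(ii) to obtain additivity of a densely finite $\eta$ on pairs of \emph{full} positive elements, and then bootstraps to arbitrary pairs in a second step: it picks a full $w\in A_+$, takes an approximate unit $(e_n)$ of $C^*(a,b,w)$ with $e_{n+1}e_n=e_n$, rewrites $\eta(e_nae_n+e_nbe_n)$ by inserting $e_{n+1}$ (using additivity of 2-quasitraces on commutative subalgebras and on full pairs, plus $\eta(e_{n+1})<\infty$), and lets $n\to\infty$ via lower semicontinuity. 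Your approach instead pads the diagonals directly, adjoining a third slot $\epsilon c$ with $c$ full and $\eta(c)<\infty$, so that Lemma \ref{equalontracesbounded}(ii) applies in one shot; additivity then falls out by cancellation of the finite quantity $\epsilon\eta(c)$. Both are correct, and yours is arguably more streamlined: it replaces the approximate-unit computation with the one-time task of manufacturing a full $c$ with $\eta(c)<\infty$, which you correctly extract from compactness of $\mathrm{Prim}(A)$, dense finiteness of $\eta$, and a cut-down argument (via R\o rdam's lemma: if $(c_0-\epsilon)_+$ is full and $\|c-c_0\|$ is small, then $(c-\delta)_+$ dominates $(c_0-\epsilon)_+$ in the Cuntz order and is therefore full). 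One small quibble: you attribute the monotonicity $\eta(a)\leq\eta(a+b)$ to $a\precsim_{\Cu}a+b$, but Cuntz subequivalence controls only the dimension function $d_\eta$, not $\eta$; the correct and more elementary reason is that 2-quasitraces are order-preserving on $A_+$ and $a\leq a+b$. In fact this case-split is superfluous anyway: since $\eta(c)<\infty$, the identity $\eta(a+b)+\epsilon\eta(c)=\eta(a)+\eta(b)+\epsilon\eta(c)$ already forces $\eta(a+b)$ and $\eta(a)+\eta(b)$ to be infinite simultaneously, so the cancellation argument covers all cases at once.
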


\begin{proof}
(i)
Let $a,b\in A_+$. Let $c,d\in M_2(A)$ be defined as 
\[
c=\begin{pmatrix}
a+b & \\ & 0
\end{pmatrix},
d=\begin{pmatrix}
a & \\ & b
\end{pmatrix}.
\]
 Then $\tau(c)=\tau(d)$ for all $\tau\in \Text(A)$. By Lemma \ref{equalontraces} (ii), we get that $\tau(c)=\tau(d)$
for all $\tau\in \QT(A)$. But $\tau(c)=\tau(a+b)$ and $\tau(d)=\tau(a)+\tau(b)$.
So all $\tau\in \QT(A)$ are additive, as desired.   

(ii) The same proof as in  (i), but relying now on Lemma \ref{equalontracesbounded} (ii), shows in this case that the lower semicontinuous 2-quasitraces  on $A$ are additive on pairs of full positive elements. Let us now prove that the densely finite   ones are additive on any pair of positive elements. Let $\tau$ be one such   2-quasitrace   and let $a,b\in A_+$. 
Say $w\in A_+$ is full (whose existence is guaranteed by the compactness of $\mathrm{Prim}(A)$) and let $e_1,e_2,\dots$ be an approximate unit of $C^*(a,b,w)$ such that $e_{n+1}e_n=e_n$ for all $n$. Notice that $e_n$ is full
for large enough $n$ by the compactness of $\mathrm{Prim}(A)$. So
\begin{align*}
\tau(e_n a e_n + e_n b e_n)+2\tau(e_{n+1}) 
&=\tau(e_n a e_n + e_{n+1} + e_nbe_n + e_{n+1})\\
&=\tau(e_nae_n+e_{n+1})+\tau(e_nbe_n+e_{n+1})\\
&=\tau(e_n a e_n)+\tau(e_n b e_n)+2\tau(e_{n+1}).
\end{align*}
In the first and third equalities we have used the additivity of $\tau$ on commutative $\Cstar$-algebras and in the middle equality the additivity of $\tau$
on pairs of full elements.
 Since $\tau(e_{n+1})<\infty$, we get
\[
\tau(e_nae_n + e_nbe_n)=
\tau(e_nae_n)+\tau(e_n b e_n),
\] 
for all $n\in \N$. Letting $n\to \infty$ and using the lower semicontinuity of $\tau$ we obtain that $\tau(a+b)=\tau(a)+\tau(b)$, as desired.
\end{proof}

\begin{remark}\label{sameas}
In view of part (i) of the previous theorem, the property of strict comparison of positive elements by traces is equivalent to ``strict comparison by 2-quasitraces" and 
``2-quasitraces are traces" (all traces and 2-quasitraces are assumed  to be lower semicontinuous). Observe also that if $A$ has strict comparison of positive elements by 2-quasitraces and 
its densely finite 2-quasitraces are traces, then $A$ has strict comparison of full positive elements by traces.
Indeed,  if $b\in (A\otimes \mathcal K)_+$ is a full element the inequality $d_\tau(a)\leq (1-\epsilon)d_\tau(b)$ need only be verified on all densely finite    traces (otherwise  $\tau(b)=\infty$). 
\end{remark}

\section{Proof of Theorem \ref{mainpure}}\label{proofofmain}
In this section we prove Theorem \ref{mainpure} from the introduction. The proof is preceded by a number of preparatory results.

\begin{theorem}\label{product}
Let $A_1,A_2,\dots$  be $\Cstar$-algebras with strict comparison by traces.
Let $h\in \prod_{n=1}^\infty A_n$ be such that $h_n\sim_{\Tr}0$ for all $n\in \N$. Then $h\sim_{\Tr}0$.
\end{theorem}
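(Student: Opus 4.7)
The plan is to prove $\tau(h) = 0$ for every $\tau \in \Tb(\prod_{n=1}^\infty A_n)$, which by the identification $\overline{[A,A]} = \bigcap_{\tau \in \Tb(A)} \ker \tau$ recalled in Section \ref{commutatorbounds} is equivalent to $h \sim_{\Tr} 0$. After splitting $h$ into real and imaginary parts (using that each $\overline{[A_n,A_n]}$ is $*$-invariant, since $[x,y]^* = [y^*,x^*]$), I may assume $h$ is selfadjoint, with Jordan decomposition $h = h^+ - h^-$ where $(h^\pm)_n = h_n^\pm$ coordinatewise.

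Fix $\tau \in \Tb(\prod_n A_n)$. I first peel off its ``discrete'' component: composing $\tau$ with the ideal embedding $A_n \hookrightarrow \prod_n A_n$ (placing $a$ in the $n$-th coordinate) produces $\tau_n \in \Tb(A_n)$, and testing on finite subsums of approximate units (after a Jordan decomposition of $\tau$) yields $\sum_n \|\tau_n\| \leq \|\tau\|$. Hence $\tau'((a_k)_k) := \sum_n \tau_n(a_n)$ is a bounded trace on $\prod_n A_n$, and because each $h_n \in \overline{[A_n,A_n]} \subseteq \ker \tau_n$ we have $\tau'(h) = \sum_n \tau_n(h_n) = 0$. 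Writing $\sigma := \tau - \tau'$, what remains is to verify $\sigma(h) = 0$ for the residual bounded trace $\sigma$ that vanishes on the ideal $\bigoplus_n A_n$.

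The residual statement is where strict comparison in each factor is indispensable. In each $A_n$, Theorem \ref{strictbytraces} forces lower semicontinuous 2-quasitraces to be traces, and Lemma \ref{equalontraces}(ii), combined with a restriction-to-$\her(h_n^+ + h_n^-)$ argument, propagates the equalities $\rho(h_n^+) = \rho(h_n^-)$ (evident on $\Tb(A_n)$) to all $\rho \in \QT(A_n)$. Strict comparison by 2-quasitraces then supplies Cuntz-Pedersen-type witnesses relating $h_n^+$ and $h_n^-$ inside $\her(h_n^+ + h_n^-)$ whose norms are controlled purely in terms of $\|h_n\| \leq \|h\|$ --- crucially, by a bound independent of $n$. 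Given $\epsilon > 0$, this lets me extract a common integer $K$ and norm bound $C = C(\|h\|,\epsilon)$ such that $\|h_n - \sum_{k=1}^K [x_{n,k},y_{n,k}]\| < \epsilon$ and $\|x_{n,k}\|, \|y_{n,k}\| \leq C$ hold simultaneously for every $n$. The assembled sequences $X_k := (x_{n,k})_n$, $Y_k := (y_{n,k})_n$ then lie in $\prod_n A_n$ and satisfy $\|h - \sum_{k=1}^K [X_k,Y_k]\| \leq \epsilon$. Since $\epsilon$ is arbitrary, $h \in \overline{[\prod_n A_n, \prod_n A_n]}$, so in particular $\sigma(h) = 0$.

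The main obstacle is the uniform extraction step in the third paragraph: converting per-factor, individually-convergent Cuntz-Pedersen commutator series for the $h_n$ into a family with \emph{common} number of terms and \emph{uniform} norm bounds. Raw Cuntz-Pedersen offers no such uniformity, and the leverage for it comes exactly from strict comparison, which rigidifies Cuntz subequivalence inside the hereditary subalgebras attached to $h_n^\pm$ and permits witnesses of norm depending only on $\|h_n\|$. The remainder of the argument --- the discrete-trace split and the assembly in the product --- is essentially bookkeeping once this uniform commutator approximation is in hand.
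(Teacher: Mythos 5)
Your first two paragraphs are sound bookkeeping, and the decomposition of $\tau$ into the ``discrete'' part $\tau' = \sum_n \tau_n$ and the residual part $\sigma$ vanishing on $\bigoplus_n A_n$ is a clean observation. The problem is the third paragraph, which is where all the substance has to live. You assert that strict comparison yields a \emph{uniform} commutator approximation: a common integer $K$ and constant $C$ such that $\|h_n - \sum_{k=1}^K[x_{n,k},y_{n,k}]\| < \epsilon$ with $\|x_{n,k}\|,\|y_{n,k}\| \leq C$ hold simultaneously for all $n$. You give no argument for this, and none is cheaply available. Cuntz--Pedersen produces only an infinite series of commutators; turning it into a finite sum of a uniformly bounded number of terms with uniform norm bounds is exactly the difficulty the whole paper is organized around. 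Indeed, your claim is essentially Corollary \ref{halfnorm}, and in the paper's logical order that corollary is \emph{derived from} Theorem \ref{product} by a compactness/contradiction argument in the product. So as it stands your step 3 either begs the question or leaves the crux of the theorem unaddressed. (Notice too that if your third paragraph did go through, it would directly give $h \in \overline{[\prod_n A_n, \prod_n A_n]}$, making the $\tau = \tau' + \sigma$ split unnecessary --- a symptom that the real content is being smuggled into one unproved sentence.)

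The paper's proof takes a genuinely different route that sidesteps any uniform commutator estimate at this stage. It argues by contradiction: assuming $\mu(h) \neq 0$ for some trace $\mu$ of norm one, it constructs (via approximate units and small perturbations $h_n^{(i)}$ of $h_n$) positive elements $a = (a_n)_n$ and $b = (b_n)_n$ in $\prod_n A_n$ such that $\tau(a_n) = \tau(b_n)$ for every lower semicontinuous trace $\tau$ on $A_n$ but $\mu(a) \neq \mu(b)$. It then shows that $\prod_n A_n$ has strict comparison of positive elements with respect to the compact set $K = \overline{\bigcup_n \Text(A_n)}$, and invokes Lemma \ref{equalontraces}(ii) to propagate $\tau(a) = \tau(b)$ from $K$ to all of $\Text(\prod_n A_n)$, contradicting $\mu(a) \neq \mu(b)$. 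The argument stays entirely at the level of traces and Cuntz comparison; no bound on the number of commutators in the factors is ever needed. If you want to repair your approach, you would need to replace the hand-wave in paragraph three with something like this construction.
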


\begin{proof}
Let $h\in \prod_{n=1}^\infty A_n$ be such that $h_n\sim_{\Tr}0$ for all $n$.
First, let us  show how to reduce ourselves to the case that the $\Cstar$-algebras $A_n$ are $\sigma$-unital for all $n\in \N$. Fix $n\in \N$. Since $h_n\in \overline{[A_n,A_n]}$, there exist finite sums of commutators 
$\sum_{i=1}^{N_k} [x_{i}^{(k)},y_{i}^{(k)}]$, with $x_{i}^{(k)},y_{i}^{(k)}\in A$ for all $i,k$, converging to $h_n$ as $k\to \infty$.  Consider the $\Cstar$-subalgebra $B_n=\her(\{x_{i}^{(k)},y_{i}^{(k)}\mid i=1,\dots,N_k, k\in \N\})$. Then $B_n$ has strict comparison by traces and  is  $\sigma$-unital. Furthermore, $h_n\in \overline{[B_n,B_n]}$. To prove the theorem, it suffices to show that   $h\sim_{\Tr}0$ in $\prod_{n=1}^\infty B_n$. Thus, from this point on,  we assume that the $\Cstar$-algebras $A_1,A_2,\dots$ are all $\sigma$-unital.

Let us set $\prod_{n=1}^\infty A_n=A$. As before, let $h\in A$ be such that $h_n\sim_{\Tr}0$ for all $n$. We may assume that  $\|h\|\leq 1$. 
Let us suppose, for the sake of contradiction, that  $\mu(h)\neq 0$ for some trace  $\mu$ on $A$ of norm $1$.  Notice then that $(h_n)_+\sim_{\Tr}(h_n)_-$ for all $n\in\N$, but 
$\mu( h_+)\neq \mu(h_-)$. We wish, however, to find  positive elements $a_n,b_n\in A_n$ agreeing on all traces in $\Text(A_n)$ (i.e., l.s.c. and $[0,\infty]$-valued)  while at the same time  $\mu((a_n)_n)\neq \mu((b_n)_n)$.
Let us  show how to  achieve this:  Assume, without loss of generality, that  $\mu(h) >0$. Set $\mu(h)=\delta$. Fix $n\in \N$. Let $(e_{n}^{(i)})_i$ be an approximate unit of $A_n$ such that 
$e_{n}^{(i+1)}e_{n}^{(i)}=e_{n}^{(i)}$ for all $i$. 
We can find  $h_{n}^{(i)}\in \her(e_{n}^{(i)})$ for $i$ large enough such that $\|h_{n}^{(i)}-h_n\|<\frac \delta 2$
and $h_{n}^{(i)}\sim_{\Tr} 0$ in $\her(e_{n}^{(i)})$. (This is achieved as follows: first, sufficiently approximate $h_n$ by a finite sum of commutators; next, multiply the elements in these commutators by $e_{n}^{(i)}$ and let $i\to \infty$.) Let 
\begin{align*}
a_n &:=(h_{n}^{(i)})_++e_{n}^{(i+1)},\\
b_n &:=(h_{n}^{(i)})_-+e_{n}^{(i+1)}. 
\end{align*}
Let $\tau \in \Text(A_n)$. Suppose first that
$\tau(e_{n}^{(i+1)})<\infty$. Then $\tau$  is bounded on $\her(e_n^{(i)})$, and so  $\tau(h_n^{(i)})=0$, because  $h_n^{(i)}\sim_{\Tr}0$   in $\her(e_n^{(i)})$. 
Hence,
\begin{align*}
\tau(a_n) &=\tau((h_{n}^{(i)})_+)+\tau(e_{n}^{(i+1)})\\
&=\tau((h_{n}^{(i})_-)+\tau(e_{n}^{(i+1)})\\
&=\tau(b_n).
\end{align*}
On the other hand, if $\tau(e_{n}^{(i+1)})=\infty$ then again we find that 
$\tau(a_n)=\infty=\tau(b_n)$. Furthermore,
\begin{align*}
\tau((a_n-t)_+) &=\tau( (h_{n}^{(i)})_+  + (e_{n}^{(i+1)}-t)_+ )\\
&=\tau((h_{n}^{(i)})_-+ (e_{n}^{(i+1)}-t)_+)\\
&=\tau((b_n-t)_+)
\end{align*}
for all $0\leq t<1$ and all $\tau\in \Text(A_n)$. (This equality is again verified both if $\tau((e_{n}^{(i+1)}-t)_+)<\infty$ and if $\tau((e_{n}^{(i+1)}-t)_+)=\infty$.)

Let $a=(a_n)_n$ and $b=(b_n)_n$. Notice that $\mu(a)-\mu(b)=\mu((h_{n}^{(i)})_n)>\delta/2$.
Let $K=\overline{\bigcup_{n=1}^\infty \Text(A_n)}$. Let us show that $\tau(a)=\tau(b)$ for all $\tau\in K$. Consider the set
\[
Q=\{\tau\in \Text(A)\mid \tau((a-t)_+)=\tau((b-t)_+)\hbox{ for all }0\leq t<1\}.
\]
Clearly, $\tau(a)=\tau(b)$ for all $\tau\in Q$. So it suffices to show  that $K\subseteq Q$.  By our construction of $a$ and $b$, we have $\bigcup_{n=1}^\infty \Text(A_n)\subseteq Q$. We will be done once we have shown that $Q$ is closed in $\Text(A)$. Suppose that $\tau_\lambda\to \tau$ in $\Text(A)$, with $\tau_\lambda\in Q$ for all $\lambda$.  Let $0\leq t<1$ and choose $t<t'<1$. Then
\[
\tau((a-t')_+)\leq \liminf \tau_\lambda((a-t')_+)\leq \limsup \tau_{\lambda}(b-t')_+\leq \tau(b-t)_+.
\]
Passing to the supremum over all $t'>t$ on the left, we get that $\tau((a-t)_+)\leq \tau((b-t)_+)$. By symmetry, we also have $\tau((b-t)_+)\leq \tau((a-t)_+)$.
Thus, $\tau\in Q$ as desired. 

Let us now show that $A$ has strict comparison of positive elements by $K$ (as defined in Definition \ref{strictbyK}). Let $c,d\in (A\otimes\mathcal K)+$  be such that
 $d_\tau(c)\leq (1-\gamma)d_\tau(d)$ for all $\tau\in K$ and  some $\gamma>0$. In order to show that $[c]\leq [d]$,
it suffices to show that $[(c-\epsilon)_+]\leq [d]$ for all $\epsilon>0$. But, for each $\epsilon>0$ and $\delta>0$, we have $(c-\epsilon)_+\sim_{\Cu}c'\in M_N(A)$ and $(d-\delta)_+\sim_{\Cu} d'\in M_N(A)$
for some $N>0$. Thus, applying Lemma \ref{epsilondelta}, we may reduce the proof to the case that $c,d\in M_N(A)$ for some $N\in\N$. Let us assume this.
Let us fix $\epsilon>0$. Again by Lemma \ref{epsilondelta}, there exists $\delta>0$ such that
\[
d_\tau((c-\epsilon)_+)\leq \Big(1-\frac \gamma 2\Big)d_\tau((d-\delta)_+),\hbox{ for all }\tau\in K.
\] 
Since $M_N(A)\cong \prod_{n} M_N(A_n)$, we can write $c=(c_n)_n$ and $d=(d_n)_n$, with $c_n,d_n\in M_N(A_n)$ for all $n$. Projecting onto $A_n$, we get
\[
d_\tau((c_n-\epsilon)_+)\leq \Big(1-\frac \gamma 2\Big)d_\tau((d_n-\delta)_+),\hbox{ for all }\tau\in \Text(A_n).
\] 
Since the $\Cstar$-algebra $A_n$ has strict comparison of positive elements by traces, we get that $[(c_n-\epsilon)_+]\leq [ (d_n-\delta)_+]$. Thus, $(c_n-2\epsilon)_+=x_n^*x_n$ and $x_nx_n^*\in \her((d_n-\delta)_+)$ for some $x_n\in A_n$.  Then
$(c-2\epsilon)_+=x^*x$ and $xx^*\leq Md$ for some $M>0$, where $x=(x_n)_n$. Hence, $[(c-2\epsilon)_+]\leq [d]$ for all $\epsilon>0$. Letting $\epsilon\to 0$, we get $[c]\leq [d]$, as desired.

We now know that $\tau(a)=\tau(b)$ for all $\tau\in K$ and that $A$ has strict comparison by $K$.
By  Lemma \ref{equalontraces} (ii), we conclude that
$\tau(a)=\tau(b)$ for all $\tau \in \Text(A)$. But this contradicts that 
  $\mu(a)\neq \mu(b)$, which completes the proof.
\end{proof}

Essentially the same proof, with some modifications, yields the following theorem:
\begin{theorem}\label{productbounded}
	Let $A_1,A_2,\dots$  be  $\Cstar$-algebras with $\mathrm{Prim}(A_n)$ compact and with strict comparison of full positive elements by traces.
	Let $h\in \prod_{n=1}^\infty A_n$ be such that $h_n\sim_{\Tr}0$ for all $n\in \N$. Then $h\sim_{\Tr}0$.	
\end{theorem}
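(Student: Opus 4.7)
The proof will adapt that of Theorem \ref{product}, replacing every appeal to strict comparison by traces with strict comparison of full positive elements by traces, while carefully tracking fullness. I will follow the same skeleton---reduction to $\sigma$-unital algebras, argument by contradiction from some $\mu\in\Tb(A)$ with $\mu(h)\neq 0$, construction of positive $a_n, b_n\in A_n$ agreeing on $\Text(A_n)$ but not on $\mu$, and a final contradiction---indicating the points at which compactness of $\mathrm{Prim}(A_n)$ is used.

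The $\sigma$-unital reduction must be tweaked to preserve compactness of $\mathrm{Prim}(A_n)$. Compactness of $\mathrm{Prim}(A_n)$ guarantees the existence of a full positive element $w_n\in A_n$; we include $w_n$ among the generators of the hereditary subalgebra $B_n\subseteq A_n$ used in that reduction, so that $B_n$ is $\sigma$-unital, contains $h_n$ up to commutators, and still has compact primitive spectrum with strict comparison of full positive elements by traces. After this reduction, we can arrange for the approximate units $(e_n^{(i)})_i$ with $e_n^{(i+1)}e_n^{(i)}=e_n^{(i)}$ to consist of full elements for $i$ large enough. Defining $a_n=(h_n^{(i)})_+ + e_n^{(i+1)}$ and $b_n=(h_n^{(i)})_- + e_n^{(i+1)}$ as in Theorem \ref{product}, the dominations $a_n\geq e_n^{(i+1)}$ and $b_n\geq e_n^{(i+1)}$ show that $a_n,b_n$ are full in $A_n$, and therefore $a:=(a_n)_n$ and $b:=(b_n)_n$ are full in $A=\prod_n A_n$. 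The same computation as in Theorem \ref{product} gives $\tau((a_n-t)_+)=\tau((b_n-t)_+)$ for all $\tau\in\Text(A_n)$ and $t\in[0,1)$, and the closed-set argument then yields $\tau(a)=\tau(b)$ for all $\tau\in K:=\overline{\bigcup_n \Text(A_n)}$, while $\mu(a)-\mu(b)>\delta/2$ by construction.

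The critical step is to show that $A$ has strict comparison of full positive elements by $K$. Following Theorem \ref{product}, one reduces via Lemma \ref{epsilondelta} to $c,d\in M_N(A)$, decomposes $c=(c_n)_n$ and $d=(d_n)_n$, and applies strict comparison of full positive elements in each $A_n$. The main obstacle is that the functional-calculus cut-downs $(c_n-\epsilon)_+$ and $(d_n-\delta)_+$ must remain full; this is the same delicate point addressed in the proof of Lemma \ref{equalontracesbounded}, and is handled by exploiting compactness of $\mathrm{Prim}(A_n)$ to choose the cut-off levels $\epsilon,\delta$ suitably. With strict comparison of full positive elements by $K$ secured, the obvious variant of Lemma \ref{equalontracesbounded}(ii) with $K$ in place of $\Text(A)$---whose proof is formally identical---combined with fullness of $a$ and $b$ in $A$, forces $\tau(a)=\tau(b)$ for all $\tau\in\QT(A)$. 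This contradicts $\mu(a)\neq\mu(b)$ and completes the proof.
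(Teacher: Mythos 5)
Your proof follows the paper's approach closely, and you correctly identify the core modifications: preserving compactness of $\mathrm{Prim}$ through the $\sigma$-unital reduction, arranging for $a_n,b_n$ to be full, adapting the strict-comparison-by-$K$ argument to full elements, and invoking the $K$-variant of Lemma \ref{equalontracesbounded} (a sensible clarification, since that lemma is stated only with $\Text(A)$ and the paper glosses over the need for the $K$-variant).

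However, there is a genuine gap in the $\sigma$-unital reduction. You include a full positive element $w_n$ among the generators of $B_n$ and claim this ensures that $\mathrm{Prim}(B_n)$ is compact. That inference is false: the existence of a full positive element is strictly weaker than compactness of the primitive spectrum. For instance, $C_0(\mathbb R)$ contains full (indeed strictly positive) elements, yet $\mathrm{Prim}(C_0(\mathbb R))=\mathbb R$ is not compact. The paper instead uses the characterization that a $\sigma$-unital $\Cstar$-algebra $A$ has compact $\mathrm{Prim}(A)$ if and only if there exist $c\in A_+$ and $\epsilon>0$ with $(c-\epsilon)_+$ full. One should therefore include such a $c_n$ (not merely a full $w_n$) among the generators of $B_n$: then $c_n\in B_n$ and $(c_n-\epsilon)_+$ remains full in the hereditary subalgebra $B_n$, whence $\mathrm{Prim}(B_n)$ is compact. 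Without compactness of $\mathrm{Prim}(B_n)$, your subsequent assertion that the approximate unit elements $e_n^{(i)}$ are eventually full---which is precisely what makes $a_n$ and $b_n$ full---is also unjustified, so the construction collapses at that point. This is a real error, but a local one: replacing $w_n$ by the element $c_n$ from the compactness characterization repairs the argument.
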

\begin{proof}
	Let us sketch the necessary modifications to the proof of Theorem \ref{product} that yield 
	a proof of the present theorem: As before, we can reduce to the case that the $\Cstar$-algebras $A_1,A_2,\dots$ are $\sigma$-unital. To this end, we use  that $\mathrm{Prim} A_n$ is  if and only if there exist $c_n\in (A_n)_+$ and $\epsilon>0$ such that $(c_n-\epsilon)_+$ is full. Now defining $B_n=\her(\{c_n,x_i^{(k)},  y_i^{(k)}\mid i,k\})$, we guarantee that $B_n$ has compact primitive spectrum for all $n$ and is $\sigma$-unital. Next, 
the elements $a_n$ and $b_n$ in the first part of the proof are constructed as before, except that we take care that they be full elements. This is possible since the elements of the approximate unit $(e_n^{(i)})_i$ are full for large enough $i$. The definitions of the   sets $K$ and $Q$ remain unchanged, and again we find that $K\subseteq Q$ and that $Q$ is closed. In the next segment of the proof of Theorem \ref{product}  it is shown that $A$ has strict comparison of positive elements by $K$. The same arguments can be used to show that, in the present case,  the strict comparison by $K$ holds for \emph{full} positive elements; i.e., assuming that $c$ and $d$ are full. We finish the proof as before, now relying on Lemma \ref{equalontracesbounded},  rather than Lemma \ref{equalontraces}.
\end{proof}

We deduce from the previous theorems the following corollaries:
\begin{corollary}\label{halfnorm}
There exists $N\in \N$ such that if $A$ is a $\Cstar$-algebra with strict comparison of positive elements by traces and $h\in A$ is such that
$h\sim_{\Tr}0$, then
\[
\|h-\sum_{i=1}^N [x_i,y_i]\|\leq \frac 1 2\|h\|
\] 
for some $x_i,y_i\in A$ such that 
$\|x_i\|\cdot \|y_i\|\leq \|h\|$ for all $i$. 
\end{corollary}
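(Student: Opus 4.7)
The plan is to argue by contradiction, using Theorem \ref{product} applied to a product of would-be counterexamples. By homogeneity in $\|h\|$ it suffices to prove the statement for $h$ with $\|h\|=1$, in which case the required bound becomes $\|x_i\|\cdot\|y_i\|\leq 1$. Assume that no universal $N$ works. Then for each positive integer $N$ I may choose a $\Cstar$-algebra $A_N$ with strict comparison of positive elements by traces and an element $h_N\in A_N$ satisfying $\|h_N\|=1$ and $h_N\sim_{\Tr}0$, such that no sum of $N$ commutators in $A_N$ with individual norm products bounded by $1$ approximates $h_N$ to within $1/2$.

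Next I would form the product $\Cstar$-algebra $A:=\prod_{N=1}^\infty A_N$ and the element $h:=(h_N)_N\in A$. Since each $A_N$ has strict comparison of positive elements by traces and $h_N\sim_{\Tr}0$ in $A_N$ for every $N$, Theorem \ref{product} yields $h\sim_{\Tr}0$ in $A$, and hence $h\in\overline{[A,A]}$. I can therefore find $M\in\N$ and elements $X_1,Y_1,\dots,X_M,Y_M\in A$ with $\|h-\sum_{i=1}^M[X_i,Y_i]\|<1/4$. There is no a priori control on the norms of these $X_i,Y_i$; let $K_i:=\|X_i\|\cdot\|Y_i\|$.

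To enforce the individual norm constraint I apply the elementary splitting identity $[X_i,Y_i]=\sum_{j=1}^{n_i}[X_i/n_i,Y_i]$ with $n_i:=\lceil K_i\rceil$, which replaces each $[X_i,Y_i]$ by $n_i$ commutators whose norm product is at most $K_i/n_i\leq 1$. Setting $N_0:=\sum_{i=1}^M n_i$, this produces $N_0$ commutators $[X'_j,Y'_j]$ in $A$ with $\|X'_j\|\cdot\|Y'_j\|\leq 1$ for every $j$ and still $\|h-\sum_{j=1}^{N_0}[X'_j,Y'_j]\|<1/4$. Projecting onto the $N_0$-th coordinate via the canonical quotient $\pi_{N_0}\colon A\to A_{N_0}$ yields elements $x_j:=\pi_{N_0}(X'_j)$ and $y_j:=\pi_{N_0}(Y'_j)$ in $A_{N_0}$ satisfying $\|x_j\|\cdot\|y_j\|\leq 1=\|h_{N_0}\|$ and $\|h_{N_0}-\sum_{j=1}^{N_0}[x_j,y_j]\|<1/4<1/2$, contradicting the defining property of the counterexample $(A_{N_0},h_{N_0})$.

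The essential ingredient is Theorem \ref{product}: it lets a family of counterexamples coalesce into a single element of $\overline{[A,A]}$ whose coordinates all have unit norm. The splitting identity and the coordinate projection are routine; the only subtle point is that the contradiction uses the freedom to \emph{choose} the counterexample index $N_0$ after the approximation in $A$ has been obtained, which is permissible because the corollary is being assumed to fail simultaneously for every $N$.
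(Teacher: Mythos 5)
Your proposal is correct and follows essentially the same route as the paper: take a sequence of putative counterexamples $(A_N,h_N)$, assemble $h=(h_N)_N$ in the product, invoke Theorem~\ref{product} to get $h\sim_{\Tr}0$, approximate by finitely many commutators in the product, and project onto a late coordinate to derive a contradiction. The only presentational difference is that you make explicit the splitting $[X,Y]=\sum_{j=1}^{n}[X/n,Y]$ used to trade arbitrary commutators for more commutators with norm product at most $1$, whereas the paper compresses this into the phrase ``increasing $N$ if necessary, we may assume $\|x_j\|,\|y_j\|\leq 1$.''
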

\begin{proof}
Let us suppose for the sake of contradiction that  no such $N$ exists. Then there exist $\Cstar$-algebras $A_1,A_2,A_3,\dots$ with strict comparison by traces and contractions $h_n\in A_n$ such that $h_n\sim_{\Tr} 0$ and the distance from $h_n$ to elements of the form $\sum_{i=1}^n [u_i,v_i]$, with $\|u_i\|,\|v_i\|\leq 1$ for all $i$, is at least $1/2$ for all $n\in \N$. Let $h=(h_n)\in \prod_{n=1}^\infty A_n$. By Theorem \ref{product}, $h\sim_{\Tr} 0$. Hence,
$\|h-\sum_{j=1}^N[x_j,y_j]\|<1/2$ for some $x_j,y_j\in \prod_{n=1}^\infty A_n$. Increasing $N$ if necessary, we may assume that $\|x_j\|,\|y_j\|\leq 1$ for all $j=1,\dots,N$.
We get a contradiction projecting onto  the $N$-th coordinate.
\end{proof}

 The same proof, now relying on Theorem \ref{productbounded}, yields the following 
 corollary:

\begin{corollary}\label{halfnormbounded}
	There exists $N'\in \N$ such that if $A$ is  a $\Cstar$-algebra with $\mathrm{Prim}(A)$ compact and  with strict comparison of full positive elements by  traces, and $h\in A$ is such that
	$h\sim_{\Tr}0$, then
	\[
	\|h-\sum_{i=1}^{N'} [x_i,y_i]\|\leq \frac 1 2\|h\|
	\] 
	for some $x_i,y_i\in A$ such that 
	$\|x_i\|\cdot \|y_i\|\leq \|h\|$ for all $i$. 
\end{corollary}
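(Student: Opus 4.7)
The plan is to mirror the compactness/contradiction argument used for Corollary \ref{halfnorm}, with Theorem \ref{productbounded} replacing Theorem \ref{product}. I would suppose for contradiction that no such $N'$ exists. Then for each $n \in \N$ I could extract a $\Cstar$-algebra $A_n$ with $\mathrm{Prim}(A_n)$ compact and with strict comparison of full positive elements by traces, together with a contraction $h_n \in A_n$ satisfying $h_n \sim_{\Tr} 0$, but for which
\[
\Big\| h_n - \sum_{i=1}^n [u_i, v_i] \Big\| \geq \tfrac{1}{2}
\]
for every choice of $u_i, v_i \in A_n$ with $\| u_i \|, \| v_i \| \leq 1$.

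Next, I would assemble these into a single element $h = (h_n)_n \in \prod_{n=1}^\infty A_n$. Since the hypotheses of Theorem \ref{productbounded} are imposed on each factor $A_n$ individually (not on the product), they are satisfied, and the theorem delivers $h \sim_{\Tr} 0$ in $\prod_{n=1}^\infty A_n$. Consequently $h$ lies in the closed linear span of commutators in the product, so there exist $N' \in \N$ and $x_j, y_j \in \prod_n A_n$ with $\| h - \sum_{j=1}^{N'} [x_j, y_j] \| < 1/2$. Rescaling each pair $(x_j, y_j)$, and enlarging $N'$ if necessary, one may arrange $\| x_j \|, \| y_j \| \leq 1$ for all $j$. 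Projecting onto the $N'$-th coordinate then yields an approximation of $h_{N'}$ to within $1/2$ by a sum of $N'$ commutators of contractions, contradicting the defining property of $h_{N'}$.

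There is essentially no novel obstacle beyond what was overcome in the proof of Corollary \ref{halfnorm}: Theorem \ref{productbounded} is precisely the tool the argument needs. The one subtlety worth flagging is that $\prod_n A_n$ itself need neither have compact primitive spectrum nor satisfy strict comparison of full positive elements by traces, so one cannot hope to invoke such properties at the level of the product; fortunately the hypotheses of Theorem \ref{productbounded} live on the factors, which is exactly what this contradiction scheme requires. Finally, the homogeneous bound $\| x_i \| \cdot \| y_i \| \leq \| h \|$ claimed in the statement is obtained by applying the contractive version just proved to $h / \| h \|$ and then rescaling one member of each commutator pair by $\| h \|$.
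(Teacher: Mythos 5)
Your proposal is correct and is exactly the argument the paper intends: the paper's proof of this corollary simply says ``the same proof [as Corollary \ref{halfnorm}], now relying on Theorem \ref{productbounded}, yields the following corollary,'' which is precisely the substitution you carry out. The remark you flag---that the hypotheses of Theorem \ref{productbounded} sit on the individual factors $A_n$ and not on the product, which would in general fail to have compact primitive spectrum or strict comparison---is indeed the right thing to notice when checking the argument transfers.
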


From these corollaries we deduce  the theorem on traces of products
and ultraproducts stated in the introduction: 
\begin{proof}[Proof of Theorem \ref{mainproducts}]
Imitate the proof of  \cite[Theorem 8]{ozawa}, now relying on Corollary \ref{halfnorm} or Corollary \ref{halfnormbounded} instead of on  
\cite[Theorem 6]{ozawa}.
\end{proof}


In order to obtain finite commutator bounds for a pure $\Cstar$-algebra whose 2-quasitraces are traces, we intend to apply Theorem \ref{fackstechnique}. We have already shown that condition (ii) of that theorem is met by this class of $\Cstar$-algebras (in Corollary \ref{halfnorm}). In the next lemmas we establish the existence of a sequence of pairwise orthogonal positive elements as in Theorem \ref{fackstechnique} (i). 

Recall that, given positive elements $a$ and $b$, by $b\precsim a$ we mean that $b=x^*x$ and $xx^*\in \her(a)$ for some $x\in A$.
\begin{lemma}\label{strongcomp}
Let $A$ be a pure $\Cstar$-algebra and $a,b\in (A\otimes \mathcal K)_+$. If $d_\tau(b)\leq \gamma d_\tau(a)$ for all $\tau\in \QT(A)$ 
and some $\gamma<1/2$ then $b\precsim a$. 
\end{lemma}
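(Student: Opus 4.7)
The strategy is to exploit the factor-of-two slack in $\gamma < 1/2$ by combining almost unperforation (equivalently, strict comparison by $\QT(A)$) with almost divisibility of $\Cu(A)$.

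First, since $d_\tau(b\oplus b)=2d_\tau(b)\leq 2\gamma d_\tau(a)$ with $2\gamma<1$, almost unperforation gives $[b\oplus b]\leq[a]$ in $\Cu(A)$. Using the orthogonality of the summands $v_1 b v_1^*$ and $v_2 b v_2^*$, one has the identity $(b\oplus b-\epsilon)_+=(b-\epsilon)_+\oplus(b-\epsilon)_+$, and unpacking Cuntz subequivalence produces, for every $\epsilon>0$, a $y_\epsilon$ with $(b-\epsilon)_+\oplus(b-\epsilon)_+=y_\epsilon^* y_\epsilon$ and $y_\epsilon y_\epsilon^*\in\her(a)$. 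Projecting onto one summand by setting $x_\epsilon:=y_\epsilon v_1$ yields $x_\epsilon^* x_\epsilon=v_1^*[(b-\epsilon)_+\oplus(b-\epsilon)_+]v_1=(b-\epsilon)_+$ and $x_\epsilon x_\epsilon^*\leq y_\epsilon y_\epsilon^*\in\her(a)$, so $(b-\epsilon)_+\precsim a$ in Blackadar's sense for each $\epsilon>0$.

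To upgrade this family of cutdown subequivalences to $b\precsim a$, I spectrally decompose $b=\sum_n b_n$ into pairwise orthogonal pieces $b_n=b\phi_n(b)$, with continuous bumps $\phi_n$ having pairwise disjoint supports in $(0,\|b\|]$ chosen so that $\sum_n b_n=b$ in norm (if strict pairwise orthogonality of a single partition of unity is not possible, I split into two interleaved subfamilies and run the argument on each). Since $\sigma(b_n)$ is bounded away from $0$, $[b_n]$ is compact in $\Cu(A)$ and $b_n\in\her((b-\epsilon)_+)$ for sufficiently small $\epsilon$. Using almost divisibility of $\Cu(A)$, I construct pairwise orthogonal $a_n\in\her(a)$ with $d_\tau(a_n)>2d_\tau(b_n)$ for all $\tau\in\QT(A)$; the tracial budget $\sum_n 2d_\tau(b_n)=2d_\tau(b)\leq 2\gamma d_\tau(a)<d_\tau(a)$ is exactly the factor-of-two slack afforded by $\gamma<1/2$. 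Running the above Cuntz-to-Blackadar argument inside $\her(a_n)$ for each $n$, and invoking the polar-decomposition fact preceding Lemma \ref{aplusnb} (which lifts Blackadar subequivalence of $(b_n-\delta)_+$ to that of $b_n$ itself, using compactness of $[b_n]$), yields $x_n$ with $x_n^* x_n=b_n$ and $x_n x_n^*\in\her(a_n)$. The pairwise orthogonality of $(b_n)$ and of $(\her(a_n))$ forces $x_n x_m^*=0=x_n^* x_m$ for $n\neq m$ (via the polar decompositions of the $x_n$), so the partial sums of $\sum_n x_n$ are Cauchy: $\|\sum_{n\geq N}x_n\|^2=\|\sum_{n\geq N}b_n\|\to 0$. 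The limit $x=\sum_n x_n$ satisfies $x^*x=\sum_n b_n=b$ and $xx^*=\sum_n x_n x_n^*\in\her(a)$, as desired.

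The main obstacle is the construction of the pairwise orthogonal family $(a_n)\subseteq\her(a)$ in $\her(a)$ meeting the prescribed tracial budgets; this is where almost divisibility of $\Cu(A)$ is essential, and the strict factor-of-two slack $\gamma<1/2$ provides exactly the room needed to accommodate the $a_n$'s orthogonally inside $\her(a)$.
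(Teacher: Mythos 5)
Your overall strategy is the same as the paper's: decompose $b$ spectrally, use almost divisibility together with strict comparison to allocate pairwise orthogonal ``slots'' inside $\her(a)$, and reassemble the Blackadar subequivalences using orthogonality to take the series limit. But there is a genuine gap exactly where you flag the ``main obstacle,'' and it is not a detail you can defer: the construction of the pairwise orthogonal family $(a_n)\subseteq\her(a)$ with $d_\tau(a_n)>2d_\tau(b_n)$ is the technical core of the proof, and observing that the tracial budget $\sum_n 2d_\tau(b_n)<d_\tau(a)$ fits does not by itself produce such a family. The paper builds it iteratively: having located $c'_1=vb'_1v^*\in\her((a-\delta)_+)$, it sets $a_1=g_\delta(a)-c'_1$ (using a strictly positive $g_\delta(a)$ so that this difference is positive and orthogonal to $c'_1$), and then must verify that the residual budget $\sum_{i\geq 2}d_\tau(b''_i)\leq 2\gamma d_\tau(a_1)$ survives, splitting into the cases $d_\tau(c'_1)<\infty$ and $d_\tau(c'_1)=\infty$ (the latter handled via $[b'_1]\leq k[d']$ forcing $d_\tau(a_1)=\infty$ too). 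That budget-preservation step is precisely what makes the induction close, and it is entirely absent from your sketch.

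A secondary problem is the claim that you can write $b=\sum_n b_n$ in norm with $b_n=b\phi_n(b)$ pairwise orthogonal. Continuous bumps with genuinely disjoint supports cannot sum to the identity on $(0,\|b\|]$, so as stated this decomposition does not exist, and your proposed fallback (two interleaved subfamilies) does not obviously recombine: if $b=b_{\mathrm{even}}+b_{\mathrm{odd}}$ with the two halves non-orthogonal, knowing $b_{\mathrm{even}}\precsim a$ and $b_{\mathrm{odd}}\precsim a$ does not yield $b\precsim a$. The paper sidesteps this by allowing adjacent $b_i$'s to overlap and instead proving $b\precsim\bigoplus_i\frac{1}{i}b_i$ in $A\otimes\mathcal K$, where the summands are orthogonal by construction; the overlap is then absorbed into the factor of $2$ via the estimate $\sum_i d_\tau(b''_i)\leq 2d_\tau(b)$ (splitting into even- and odd-indexed sums). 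You do use a factor of $2$, but in a different place (the padding $d_\tau(a_n)>2d_\tau(b_n)$), and you have no mechanism to handle the overlap of the $b_n$'s. In short: the scaffolding of your argument is right, but the load-bearing step is asserted rather than proved, and the decomposition of $b$ needs to be routed through the direct sum in $A\otimes\mathcal K$ rather than a literal orthogonal partition of $b$.
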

\begin{proof}
The proof follows closely that of \cite[Theorem 4.4.1]{radius}, but we take care to remove the assumption on finite quotients needed there. First, using functional calculus,
let us    find $b_i,b_i',b_i''\in C^*(b)_+$, with $i=1,2,\dots$, such that 
\begin{enumerate}
	\item
	$b_i'b_i=b_i$ and  $b_i''b_i'=b_i'$ for all $i$, 
	\item
	$b_i''\perp b_j''$ for all $i,j$ such that $i\neq j$ and  $i-j$ is even, 
	\item
	 $b=\sum_{i=1}^\infty b_i$. 
\end{enumerate}
Since $b\precsim \bigoplus_{i=1}^\infty \frac 1 i b_i$, it suffices to show that $\bigoplus_{i=1}^\infty \frac 1 i b_i\precsim a$. Let us prove this.

We have
\[
\sum_{i=1}^\infty d_\tau(b_i'')\leq \sum_{i=1}^\infty d_\tau(b_{2i}'')+\sum_{i=1}^\infty d_\tau(b_{2i-1}'')
\leq 2d_\tau(b)\leq 2\gamma d_\tau(a).
\]
Since $A$ is pure, we can choose $d\in (A\otimes \mathcal K)_+$
 such that $d_\tau(b_1''\oplus d)\leq \gamma_1d_\tau(a)$ for some $2\gamma<\gamma_1<1$, and $[b_1'']\leq k [d]$ for some $k\in \N$. (E.g., choose $d$ such that  $N[d]\leq [a]\leq (N+1)[d]$ with $N$ large enough.) It follows by the strict comparison property of $A$ that $[b_1''\oplus d]\leq [a]$. Since $[b_1']\ll[b_1'']$ (where $\ll$ is the relation of compact containment in $\Cu(A)$), there exists    $\epsilon>0$ such that  $[b_1']\leq  k[(d-\epsilon)_+]$. Let $d'=(d-\epsilon)_+$.  We then have that $b_1'\oplus d'\precsim (a-\delta)_+$ for some $\delta>0$.
 Let $v\in (A\otimes \mathcal K)^{**}$ be a partial isometry implementing this subequivalence.
 Let $c_1',e\in \her((a-\delta)_+)$ be given by $c_1'=vb_1'v^*$ and  $e=vd'v^*$.
Let  $g_\delta(a)\in C^*(a)_+$ be strictly positive and such that $g_\delta(a)(a-\delta)_+=(a-\delta)_+$.
Finally, set $a_1=g_\delta(a)-c_1'$. Then
\[
\sum_{i=2}^\infty d_\tau(b_i'')+d_\tau(c_1')\leq 2\gamma d_\tau(a)\leq 2\gamma d_\tau(a_1)+d_\tau(c_1'),
\]
for all $\tau\in \QT(A)$. If $d_\tau(c_1')<\infty$, we get
\begin{align}\label{cancelc1}
\sum_{i=2}^\infty d_\tau(b_i'')\leq 2\gamma d_\tau(a_1),
\end{align}
Suppose that $d_\tau(c_1')=\infty$. Then $d_\tau(b_1')=d_\tau(c_1')=\infty$. Since $[b_1']\leq k [d']$, we also have $d_\tau(e)=d_\tau(d')=\infty$. But $a_1e=(g_\delta(a)-c_1')e=e$. Hence, $d_\tau(a_1)=\infty$. So again we have \eqref{cancelc1}. 
Let $c_1=vb_1v^*$ and notice that $a_1\perp c_1$.
We can repeat the same arguments,  now finding positive elements $c_2,a_2\in \her(a_1)$ such that $b_2\sim c_2$,  $a_2\perp c_2$ 
and
$\sum_{i=3}^\infty d_\tau(b_i'')\leq 2\gamma d_\tau(a_2)$. Continuing this process ad infinitum, we obtain  $c_1,c_2,\dots\in \her(a)$ such that  $b_i\sim c_i$ for all $i$ and $c_i\perp c_j$ for all $i\neq j$. Hence, $\bigoplus_{i=1}^\infty \frac 1 i b_i\sim \sum_{i=1}^\infty \frac 1 i c_i\in \her(a)$, which proves the lemma.
\end{proof}

The previous lemma implies that in a pure $\Cstar$-algebra the ordered semigroup $W(A)$ is hereditary in $\Cu(A)$. This will not be needed later on but  has independent interest. Recall that  $\mathrm W(A)$  is defined as 
\[
\mathrm W(A)=\{[a]\in \Cu(A)\mid a\in \bigcup_{n=1}^\infty M_n(A)\}.
\]
\begin{corollary}
Let $A$ be a pure $\Cstar$-algebra. The ordered semigroup $\mathrm W(A)$ is a hereditary (in the order-theoretic sense) subsemigroup of $\Cu(A)$.
\end{corollary}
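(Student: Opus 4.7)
The goal is to show that $\mathrm{W}(A)$ is downward closed in $\Cu(A)$: whenever $[a]\in \mathrm{W}(A)$ and $[b]\leq [a]$ in $\Cu(A)$, we want to produce an element of some $M_m(A)_+$ that is Cuntz equivalent to $b$. Say $a\in M_n(A)_+$. The natural tool is Lemma \ref{strongcomp}, which promotes a strict inequality of dimension functions into Blackadar subequivalence $\precsim$; since $b\precsim c$ with $c\in M_m(A)_+$ would give $b=x^*x$ with $xx^*\in\her(c)\subseteq M_m(A)$, and $b\sim_{\Cu}xx^*$, this would immediately place $[b]$ in $\mathrm{W}(A)$.

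The plan is therefore to inflate $a$ enough to create the required $\gamma<1/2$ slack. First I would replace $a$ by $a\otimes 1_3\in M_{3n}(A)_+$. For every $\tau\in \QT(A)$ one has $d_\tau(a\otimes 1_3)=3\,d_\tau(a)$, and from $[b]\leq [a]$ we get $d_\tau(b)\leq d_\tau(a)=\tfrac{1}{3}\,d_\tau(a\otimes 1_3)$. The equalities and inequalities are valid in $[0,\infty]$, so they hold also when the value is infinite. Taking $\gamma=1/3<1/2$, the hypotheses of Lemma \ref{strongcomp} are satisfied with $a$ replaced by $a\otimes 1_3$.

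Applying Lemma \ref{strongcomp}, I obtain $x\in A\otimes\mathcal{K}$ with $b=x^*x$ and $xx^*\in\her(a\otimes 1_3)\subseteq M_{3n}(A)$. Since $x^*x\sim_{\Cu}xx^*$, we conclude $[b]=[xx^*]\in \mathrm{W}(A)$, as desired. Closure under addition is immediate from the definition of $\mathrm{W}(A)$ and of $\oplus$ (an orthogonal sum of matrix-algebra elements lives in a larger matrix algebra), so this establishes that $\mathrm{W}(A)$ is a hereditary subsemigroup of $\Cu(A)$.

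There is essentially no hard step here; the content is entirely in Lemma \ref{strongcomp}, and the only small point to verify is that the factor of $3$ in the inflation $a\mapsto a\otimes 1_3$ is enough to fall strictly below the $1/2$ threshold of that lemma, which it is.
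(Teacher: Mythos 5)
Your proof is correct and is essentially identical to the paper's: the paper likewise replaces the matrix-algebra element by its threefold direct sum to obtain the $\gamma=1/3<1/2$ slack needed for Lemma \ref{strongcomp}, and then uses Blackadar subequivalence to land in $M_{3n}(A)$.
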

\begin{proof}
Let $a\in A\otimes \mathcal K$ and $b\in M_n(A)$ be positive elements such that $[a]\leq [b]$. By Lemma \ref{strongcomp}, $a\precsim b^{\oplus 3}\in M_{3n}(B)$. Hence $a\sim a'\in \her(b^{\oplus 3})\subseteq M_{3n}(B)$, which in turn  implies that  $[a]=[a']\in \mathrm W(A)$.
\end{proof}

In the following lemma we make use of the abundance of \emph{soft} elements in the Cuntz semigroup of a pure $\Cstar$-algebra (see    \cite{antoine-perera-thiel}). It will not be necessary here to   recall their definition and multiple properties. We will merely need the following fact: 
\emph{Let $A$ be a pure $\Cstar$-algebra and $[a]\in \Cu(A)$. Then there exists $[a]_s\in \Cu(A)$  such that 
		$[a]_s\leq [a]$, $d_\tau([a]_s)=d_\tau([a])$ for all $\tau\in \QT(A)$, and
		$[a]_s$ is exactly divisible by all $n\in \N$; i.e., for each $n\in \N$ there exists $[b]\in \Cu(A)$ such that $n[b]=[a]_s$.} 
The proof of this fact can be extracted from \cite{antoine-perera-thiel} as follows:
	By \cite[Theorem 7.3.11]{antoine-perera-thiel}, the Cuntz semigroup  of a pure $\Cstar$-algebra has $Z$-multiplication (in the sense of  \cite[Definition 7.1.3]{antoine-perera-thiel}). Here $Z$ denotes the Cuntz semigroup of the Jiang-Su $\Cstar$-algebra $\mathcal Z$.
	Then $[a]_s=1'\cdot [a]$, with $1'\in Z$ denoting the ``soft" 1, has the desired properties. See \cite[Proposition 7.3.16]{antoine-perera-thiel}.

\begin{lemma}\label{geomseries}
Let $A$ be a $\sigma$-unital pure $\Cstar$-algebra and $e_0\in A_+$ a  strictly positive element. Then there exist pairwise orthogonal positive elements $e_1,e_2,\dots \in A_+$ such that
$e_{i}\precsim e_{i+1}^{\oplus 5}$ for all $i\geq 1$ and $e_0\precsim e_1^{\oplus 11}$.  
\end{lemma}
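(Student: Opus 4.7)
The plan is to construct $e_1, e_2, \ldots$ as pairwise orthogonal positive elements of $A$ whose Cuntz classes realize a geometric sequence in the dimension functions, and then read off the subequivalences $e_0 \precsim e_1^{\oplus 11}$ and $e_i \precsim e_{i+1}^{\oplus 5}$ from Lemma~\ref{strongcomp}.

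Fix scalars $\alpha_i := \frac{1}{5} \cdot 2^{1-i}$ for $i \geq 1$; these satisfy three inequalities that drive the proof: $\alpha_1 = 1/5 > 2/11$, $\alpha_{i+1}/\alpha_i = 1/2 > 2/5$, and $\sum_{i=1}^\infty \alpha_i = 2/5 < 1/2$. The first two supply the margin needed to apply Lemma~\ref{strongcomp} with $\gamma < 1/2$: once we have $e_i \in A_+$ satisfying $d_\tau(e_i) = \alpha_i d_\tau(e_0)$ for all $\tau \in \QT(A)$, we get $d_\tau(e_0) = (5/11) d_\tau(e_1^{\oplus 11})$ and $d_\tau(e_i) = (2/5) d_\tau(e_{i+1}^{\oplus 5})$, yielding the two desired subequivalences directly from Lemma~\ref{strongcomp}. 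The third inequality, $\sum \alpha_i < 1/2$, makes $d_\tau(\bigoplus_i x_i) < (1/2) d_\tau(e_0)$ and thus allows the same lemma to embed the abstract orthogonal sum of the Cuntz classes inside $\her(e_0) = A$.

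The construction of Cuntz classes $[x_i] \in \Cu(A)$ with prescribed proportional dimension functions would use the soft-element/$Z$-multiplication structure on $\Cu(A)$ available in any pure $\Cstar$-algebra (see \cite{antoine-perera-thiel}): the soft version $[e_0]_s \leq [e_0]$ satisfies $d_\tau([e_0]_s) = d_\tau([e_0])$ for all $\tau \in \QT(A)$ and is exactly divisible by every natural number. The classes $[x_i]$ can then be built as submultiples of successively finer exact divisions of $[e_0]_s$, producing $d_\tau(x_i) = \alpha_i d_\tau(e_0)$. Applying Lemma~\ref{strongcomp} to $\bigoplus_i x_i \in (A \otimes \mathcal{K})_+$ and $e_0$ gives a Blackadar subequivalence $\bigoplus_i x_i \precsim e_0$; conjugating the orthogonal decomposition of $\bigoplus_i x_i$ by the partial isometry from the polar decomposition then realizes the desired pairwise orthogonal $e_i \in A_+ = \her(e_0)$ with $[e_i] = [x_i]$.

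The main obstacle is the proportional division of $[e_0]$ into Cuntz classes whose dimension functions form a precise geometric sequence. Almost divisibility of $\Cu(A)$ only yields integer splits $n[y] \leq [x] \leq (n+1)[y]$, which is insufficient here; the fix is to pass through soft elements, which are exactly divisible by every natural number in the pure setting. A secondary technical point is the realization of the orthogonal sum inside $A$ itself rather than in $A \otimes \mathcal{K}$, which uses the $\sigma$-unitality of $A$ together with Lemma~\ref{strongcomp}. The strict inequalities in all three relations above leave ample slack to absorb any small approximation errors that may arise along the way.
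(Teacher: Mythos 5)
Your proposal is correct and takes essentially the same route as the paper: the paper also passes to the soft element $[e_0]_s$, chooses classes $[f_i]$ with $5[f_1]=[e_0]_s$ and $2[f_{i+1}]=[f_i]$ (so $d_\tau(f_i)=\tfrac{1}{5}2^{1-i}d_\tau(e_0)$, exactly your $\alpha_i$), applies Lemma~\ref{strongcomp} three times (to get $f_i\precsim f_{i+1}^{\oplus 5}$, $e_0\precsim f_1^{\oplus 11}$, and $\bigoplus_i f_i\precsim e_0$), and conjugates by the partial isometry implementing the last subequivalence. The only cosmetic difference is that the paper explicitly scales the summands ($f=f_1\oplus\tfrac12 f_2\oplus\tfrac13 f_3\oplus\cdots$) to guarantee norm convergence of the orthogonal sum in $(A\otimes\mathcal K)_+$, a point you leave implicit.
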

\begin{proof}
Let $[e_0]_s\leq [e_0]$ be the soft element associated to the Cuntz semigroup element $[e_0]$.   
Let us find $[f_1],[f_2],\dots\in \Cu(A)$ such that $5[f_1]=[e_0]_s$ and $2[f_{i+1}]=[f_i]$ for   all $i\geq 1$. Let $f\in (A\otimes \mathcal K)_+$ be given by $f=f_1\oplus \frac 1 2f_2\oplus \frac 1 3 f_3\oplus\cdots $.
Then
\begin{align*}
 d_\tau(f_i) &= \frac{2}{5}d_\tau(f_{i+1}^{\oplus 5}),\\
 d_\tau(a) &= \frac{5}{11}d_\tau(f_1^{\oplus 11}),\\
 d_\tau(f) &= \frac{2}{5}d_\tau(e_0),
\end{align*}
for all $\tau\in \QT(A)$.  Hence, by Lemma \ref{strongcomp},
$f_i\precsim f_{i+1}^{\oplus 5}$, $e_0\precsim f_1^{\oplus 11}$, and $f\precsim e_0$.  Let $v\in (A\otimes \mathcal K)^{**}$ be the partial isometry implementing the comparison $f\precsim a$. Then, the positive elements  $e_i=vf_iv^*$, with $i=1,2,\dots$, have the desired properties.  
\end{proof}


Let us now prove Theorem \ref{mainpure} from the introduction.

\begin{proof}[Proof of Theorem \ref{mainpure}]
	We  first prove that every $\Cstar$-algebra $A$ as in the theorem has finite commutator bounds  with no approximations. We then reduce the number of commutators to 7.

	Let $h\in \overline{[A,A]}$.  Then $\sum_{i=1}^{k_n} [x_i^{(n)},y_i^{(n)}]\to h$ for some $x_i^{(n)},y_i^{(n)}\in A$.
	Passing to the hereditary $\Cstar$-subalgebra $\her(\{x_i^{(n)},y_i^{(n)}\mid i=1,\dots,k_n,\, n=1,\dots\})$ if necessary, 
	we may assume that $A$ is $\sigma$-unital (since hereditary subalgebras  of pure $\Cstar$-algebras are again pure). Let $e_0\in A_+$ be a  strictly positive element.
By	Lemma \ref{geomseries},  $A$ contains a sequence  of pairwise orthogonal positive elements $(e_i)_{i=1}^\infty$ such that $e_i\precsim e_{i+1}^{\oplus 7}$ for all $i$ and $e_0\precsim e_1^{\oplus 11}$. Furthermore, Corollary \ref{halfnorm} is applicable to every hereditary subalgebra of $A$. Thus, Theorem \ref{fackstechnique} is applicable to $A$.
That is, $A$ has finite commutator bounds $(n,C)$ with no approximations, for some $n\in \N$ and $C>0$. 

Let us now reduce the number of commutators to 7. Since $A$ is pure, we can find $b\in (A\otimes \mathcal K)_+$
such that $(2n+1)[b]\leq [e_0]\leq (2n+2)[b]$. Then 
\begin{align*}
	d_\tau(b^{\oplus n}) \leq \frac{n}{2n+1}d_\tau(e_0)\hbox{ and }
	d_\tau(e_0) \leq \frac{2n+2}{5n}d_\tau(b^{\oplus 5n}).
\end{align*}
Hence, by Lemma \ref{strongcomp}, there exists  $f\in A_+$ such that $b^{\oplus n}\sim f$ and $e_0\precsim f^{\oplus 5}$. 
Now let $h\in A$ be such that $h\sim_{\Tr}0$.  From Lemma \ref{aplusnb} we obtain that 
$h=\sum_{i=1}^5 [x_i,y_i]+h'$, for some  $h'\in \her(f)$ such that $h'\sim_{\Tr}0$ in $\her(f)$.  But $\her(f)\cong M_n(\her(b))$. Thus, by Theorem \ref{matrixreduction} (ii),
$h'=[x_6,y_6]+[x_7,y_7]$. Furthermore, the sum 
$\sum_{i=1}^7 \|x_i\|\cdot \|y_i\|$ is bounded by $C'\|h\|$ for some $C'>0$, as can be seen from  the statements of Lemma \ref{aplusnb} and Theorem \ref{matrixreduction}.
\end{proof}


In Theorem \ref{mainpure}, it is possible to reduce further the number of commutators  under a variety of additional assumptions.
We  show in Theorem \ref{reductions} below that if the $\Cstar$-algebra is  assumed to be unital, then three commutators suffice. We need a couple of lemmas.

\begin{lemma}\label{ddroplemma}
Let $d\in M(A)$ be a multiplier positive contraction 
satisfying that $d\oplus d\oplus d\precsim 1-d$ in $M(A)$. Then for each $h\in A$ there exist 
$x,y\in A$ and $h'\in \her(1-d)$ such that $h=[x,y]+h'$ and $\|x\|\cdot \|y\|\leq 2\|h\|$.
\end{lemma}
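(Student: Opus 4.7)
The plan is to exploit the three partial isometries provided by the hypothesis to explicitly construct $x,y\in A$ whose commutator agrees with $h$ modulo $\her(1-d)$, with the required norm bound.

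First, I would unpack the hypothesis: from $d\oplus d\oplus d\precsim 1-d$ in $M(A)$, extract $s_1,s_2,s_3\in M(A)$ satisfying $s_i^*s_j=\delta_{ij}d$ and $\sum_{i=1}^3 s_is_i^*\in\her(1-d)$. Since $s_i^*s_j=0$ for $i\neq j$ forces the range projections $s_is_i^*$ to be pairwise orthogonal in $M(A)^{**}$, each element of the form $s_iAs_j^*$ lies in $\her(s_is_i^*+s_js_j^*)\subseteq\her(1-d)$.

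Next, I would factor $h=h_1h_2$ in $A$ with $\|h_1\|\,\|h_2\|=\|h\|$, using a polar-type decomposition of $h$ (writing $h=v|h|$ in $A^{**}$ and taking $h_1=v|h|^{1/2}$, $h_2=|h|^{1/2}$; the element $h_1$ belongs to $A$ by a standard argument involving $h(|h|+\varepsilon)^{-1/2}|h|^{1/2}$).

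For the construction of $x$ and $y$, a natural starting ansatz is
\[
x = h_1(s_1^*+s_2^*),\qquad y = (s_1+s_2)h_2,
\]
both in $A$ since $A$ is an ideal in $M(A)$. Using $(s_1+s_2)(s_1+s_2)^*=s_1s_1^*+s_2s_2^*+s_1s_2^*+s_2s_1^*$ (an element of norm at most $2$), one has $\|s_1^*+s_2^*\|=\sqrt2$, giving $\|x\|\,\|y\|\leq 2\|h_1\|\,\|h_2\|=2\|h\|$. The commutator computes to
\[
xy = h_1(s_1^*+s_2^*)(s_1+s_2)h_2 = 2h_1dh_2,\qquad yx = \sum_{i,j\in\{1,2\}} s_ih_2h_1s_j^*\in\her(1-d),
\]
so $[x,y]\equiv 2h_1dh_2\pmod{\her(1-d)}$.

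The main obstacle lies in closing the gap between $2h_1dh_2$ and $h_1h_2=h$: the discrepancy $h_1(1-2d)h_2$ is not in $\her(1-d)$ in general. Resolving this is where the \emph{third} copy $s_3$ enters — it provides the additional room inside $\her(1-d)$ needed to absorb the extra term, by augmenting $x$ and $y$ with correction terms involving $s_3$ (for example, cross-terms of the form $s_3(\cdots)s_3^*$ and companion summands) which, when the commutator is expanded, collectively lie in $\her(1-d)$ and effectively replace $2d$ by the identity on $h_1h_2$ modulo $\her(1-d)$. Concretely, one would refine the ansatz so that the purely-diagonal $d$-contribution telescopes to $h_1h_2$ while all off-diagonal and third-copy terms collapse into $\her(1-d)$; the norm bound of $2$ is preserved because the third-copy corrections have operator norm at most $1$ and do not enlarge the $\sqrt2$ factors controlling $\|x\|$ and $\|y\|$. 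The heart of the proof, and its main technical difficulty, is verifying this explicit cancellation.
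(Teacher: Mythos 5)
Your ansatz has a genuine gap, which you correctly identify but do not close, and the fix you gesture at does not work. With $s_i^*s_j=\delta_{ij}d$, every expression obtained by sandwiching $h_1,h_2$ between the $s_i$'s inserts a factor of $d$ between $h_1$ and $h_2$; this is what forces $xy=2h_1dh_2$ rather than $h_1h_2$. Bringing in $s_3$ cannot repair this: any further term built from the $s_i$'s again produces only multiples of $h_1dh_2$ modulo $\her(1-d)$ (and degrades the norm estimate). The discrepancy $h_1(1-2d)h_2$ is in general not in $\her(1-d)$, so the commutator you construct does not equal $h$ modulo $\her(1-d)$, and no ``cancellation involving $s_3$'' changes the fact that $d$, not $1$, is what the $s_i$'s reconstitute.

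The paper sidesteps this by decomposing $h$ \emph{before} factoring. Write $h = dh + (1-d)hd + (1-d)h(1-d)$. The last summand already lies in $\her(1-d)$, so it suffices to treat $f = dh + (1-d)hd$, which satisfies $\|f\|\leq 2\|h\|$. The key observation is that $f$ lies in $\her(e)$ for a positive $e$ (explicitly $e=d+hd^2h^*+(1-d)hd^2h^*(1-d)$) that is a sum of three elements each Blackadar-subequivalent to $d$, so $e\precsim d\oplus d\oplus d\precsim 1-d$. Now Lemma \ref{aplusnb} with $n=1$ applies: factor $f=f_1f_2$ with $f_1,f_2\in\her(e)$, $\|f_1\|\|f_2\|=\|f\|$, and let $v$ be the partial isometry implementing $e\precsim 1-d$. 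Since $v^*v$ is the open support projection of $e$, it acts as a unit on $\her(e)$, so $f=f_1(v^*v)f_2=[f_1v^*,vf_2]+vf_2f_1v^*$, with $vf_2f_1v^*\in\her(1-d)$ and $\|f_1v^*\|\cdot\|vf_2\|\leq\|f\|\leq 2\|h\|$. The contrast with your attempt: the paper uses a \emph{single} partial isometry, whose ``square'' $v^*v$ is a projection (not $d$) and is a unit for $\her(e)$; the three copies of $d$ serve only to establish $e\precsim 1-d$ and are never turned into three isometries inside the commutator. The preliminary decomposition of $h$ is exactly what arranges for the piece being handled to be supported where a support projection, rather than $d$ itself, can be slotted between the two factors.
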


\begin{proof}
We have $h=dh+(1-d)hd+(1-d)h(1-d)$. Let 
\begin{align*}
	f&=dh+(1-d)hd,\\
	e&=d+hd^2h^*+(1-d)hd^2h^*(1-d). 
	\end{align*}
Then $f\in \her(e)$ and $e\precsim d\oplus d\oplus d\precsim 1-d$. So we can apply  Lemma \ref{aplusnb}  with $n=1$ to $f$. We get $f=[x,y]+f'$, with $f'\in \her(1-d)$ and $\|x\|\cdot \|y\|\leq \|f\|\leq 2\|h\|$. Hence,  $h=[x,y]+(1-d)h(1-d)+f'$, which proves the lemma. 
\end{proof}

Let $\mathcal Z_{n-1,n}$ denote the dimension drop $\Cstar$-algebra:
\[
\mathcal Z_{n-1,n}=\{f\in M_{(n-1)n}(C[0,1])\mid f(0)\in M_{n-1}\otimes 1_{n},f(1)\in M_{n}\otimes 1_{n-1}\}.
\]

\begin{proposition}\label{ddembeds}
	Let $A$ be a pure unital $\Cstar$-algebra. Then for all $n\in \N$ there exists a unital homomorphism $\phi\colon \mathcal Z_{n-1,n}\to A$.
\end{proposition}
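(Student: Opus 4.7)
The plan is to construct $\phi$ by producing positive elements in $A$ realizing the essential structure of $\mathcal{Z}_{n-1,n}$, using the divisibility and comparison tools already developed in the paper. Since $\mathcal{Z}_{n-1,n}$ can be described as continuous $M_{n(n-1)}$-valued functions on $[0,1]$ whose values at $0$ (resp.\ $1$) lie in a unital copy of $M_{n-1}$ (resp.\ $M_n$), producing a unital $\ast$-homomorphism $\phi\colon\mathcal{Z}_{n-1,n}\to A$ amounts to providing, inside $A$, a continuous path of matrix-unit systems that degenerates correctly at the two endpoints.

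First, I would use the $\mathcal{Z}$-multiplication on $\Cu(A)$ (invoked in the paper from \cite[Theorem 7.3.11]{antoine-perera-thiel}) to obtain the soft Cuntz class $[1_A]_s\leq[1_A]$ with $d_\tau([1_A]_s)=\tau(1_A)$ for all $\tau\in\QT(A)$, and then further exploit divisibility to find $[c]\in\Cu(A)$ with $n(n-1)[c]=[1_A]_s$. Next, by passing to a slight cut-down $c'=(c-\epsilon)_+$ one obtains strict inequality $n(n-1)\,d_\tau(c')<\tau(1_A)$ for all $\tau$, so Lemma \ref{strongcomp} (applied with the ratio $<1/2$ obtained via a further shrink if necessary, or more robustly via its standard orthogonality consequences) yields $n(n-1)$ pairwise orthogonal positive elements $c_{i,j}\in A$, $1\leq i\leq n-1$, $1\leq j\leq n$, each Cuntz-equivalent to $c'$. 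These play the role of the diagonal matrix units at an interior time.

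Second, I would promote the Cuntz equivalences $c_{i,j}\sim c_{k,l}$ to actual partial isometries $v_{(i,j),(k,l)}\in A^{**}$ living in $A$ via polar decomposition (using the relation $\precsim$ from the paper and the fact recalled just before Lemma \ref{aplusnb}), yielding a genuine matrix-unit system at a fixed interior $t$. To build the homotopy from $t=0$ to $t=1$, I would organize these units in two different ways: at $t=0$ group the $n$ columns together (giving an $M_{n-1}$-structure on the indices $i$), at $t=1$ group the $n-1$ rows together (giving an $M_n$-structure on the indices $j$). Interpolating between these two groupings yields the required path of matrix units, and composing with the canonical embedding $\mathcal{Z}_{n-1,n}\hookrightarrow C([0,1],M_{n(n-1)})$ defines $\phi$.

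The main obstacle is the unitality: the Cuntz class one can divide exactly is $[1_A]_s$, not $[1_A]$ itself (the latter is compact, the former soft), so naive constructions produce a map landing in a proper hereditary subalgebra rather than a unital one. Reconciling this requires using the compactness of $[1_A]$ to absorb the difference $[1_A]-[1_A]_s$ into the path itself, or equivalently appealing to the classification of $\ast$-homomorphisms out of $1$-dimensional NCCW complexes (Robert), which shows that a unital $\Cu$-morphism $\Cu(\mathcal{Z}_{n-1,n})\to\Cu(A)$ together with the trivial $K_1$ data lifts to a genuine unital $\ast$-homomorphism. Either way, purity—both almost divisibility, to construct the Cuntz-semigroup morphism, and strict comparison together with Lemma \ref{strongcomp}, to turn the Cuntz-semigroup data into norm-level relations—is exactly what is needed.
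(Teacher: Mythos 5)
Your proposal identifies the correct ingredients (almost divisibility, strict comparison via Lemma~\ref{strongcomp}) but the construction you outline does not actually go through, and it misses the tool the paper uses to close the argument.

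The central gap is in steps two and three of your plan. Cuntz equivalence of the elements $c_{i,j}$ does not give you a matrix-unit system in $A$. Even upgrading to Blackadar's relation $\precsim$, the polar-decomposition partial isometry $v$ lives in $A^{**}$, not in $A$; the fact recalled before Lemma~\ref{aplusnb} only tells you that $v$ implements an isomorphism between hereditary subalgebras of $A$ (i.e., $v\,y\in\her(b)$ for $y\in\her(a)$), not that $v\in A$. In fact $v\notin A$ in general: if the positive elements do not have closed range, no genuine partial isometry between them exists in $A$. So you cannot simply ``promote'' the Cuntz equivalences to matrix units. The subsequent ``interpolation between two groupings'' is precisely the hard content of the proposition, and you do not carry it out; it is not at all automatic that two clopen partitions of a family of Cuntz classes can be joined by a continuous path of system-of-matrix-units inside $A$. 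Your fall-back --- invoking the classification of $*$-homomorphisms out of one-dimensional NCCW complexes from \cite{nccw} --- is a legitimate alternative route, but it is a much heavier tool and you would still need to verify that the $\Cu$-morphism you construct is unital (hitting $[1_A]$, not merely $[1_A]_s$) and that the $K_1$ obstruction vanishes, none of which you do.

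The paper avoids both difficulties by applying \cite[Proposition~5.1]{rordam-winter}, which reduces the existence of a unital $*$-homomorphism $\mathcal Z_{k,k+1}\to A$ to a concrete criterion about positive elements: the existence of pairwise orthogonal, pairwise Cuntz-equivalent $f_1,\dots,f_k\in A_+$ with $1-\sum_i f_i\precsim (f_1-\epsilon)_+$. To verify this, almost divisibility yields $[a]$ with $n[a]\leq[1]\leq (n+1)[(a-\epsilon)_+]$, hence $n$ pairwise orthogonal equivalent elements $b_1,\dots,b_n$ with $b=\sum b_i\leq 1$; a trace count gives $d_\tau(1-b)\leq d_\tau(b_1')=\tfrac13\,d_\tau(b_1'\oplus b_2'\oplus b_3')$, and Lemma~\ref{strongcomp} (with ratio $<1/2$) gives $1-b\precsim b_1'\oplus b_2'\oplus b_3'$; taking $n=3k$ and grouping the $b_i$ in triples produces the required data for $\mathcal Z_{k,k+1}$. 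No matrix units are constructed by hand, and unitality is built into the criterion. You should replace your direct construction with this verification of the R\o rdam--Winter criterion, or else supply the missing classification-theorem argument in full.
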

\begin{proof}
Let  $n\in \N$.	Since $\Cu(A)$ is almost divisible, we can find $[a]\in \Cu(A)$ such that $n[a]\leq [1]$ and  $[1]\leq (n+1)[(a-\epsilon)_+]$ for some $\epsilon>0$.
	In turn, this implies that there exist  pairwise orthogonal positive elements $b_1,b_2,\dots,b_n\in A_+$ such that $(a-\frac \epsilon 2)_+\sim b_1\sim\dots\sim b_n$.
	Using functional calculus, let us modify $b_i$ so that there exists $b_i'\in \her(b_i)_+$, with $b_ib_i'=b_i'$ and $b_i'\sim (a-\epsilon)_+$ for all $i$.
	Set $\sum_{i=1}^n b_i=b$. Then
	$(1-b)b_i'=(1-b_i)b_i'=0$ for all $i$. That is, $1-b\perp b_i'$ for all $i$. We have 
	\begin{align*}
		d_\tau(1-b)+nd_\tau(b_1') &=d_\tau(1-b)+d_\tau\Big(\sum_{i=1}^n b_i'\Big)\\
		&\leq d_\tau(1)\\
		&\leq (n+1)d_\tau(b_1'),
	\end{align*}
	for all $\tau\in \QT(A)$.
	Hence, $d_\tau(1-b)\leq d_\tau(b_1')=\frac 1 3 d_\tau(b_1'\oplus b_2'\oplus b_3')$.
	By Lemma \ref{strongcomp}, this implies that $1-b\precsim b_1'\oplus b_2'\oplus b_3'$.
	Let us assume now that $n=3k$, for some $k\in \N$. By \cite[Proposition 5.1]{rordam-winter},  there exists a unital homomorphism from  the dimension drop $\Cstar$-algebra $\mathcal Z_{k,k+1}$  into $A$. 
\end{proof}

\begin{theorem}\label{reductions}
	Let $A$ be a pure  $\Cstar$-algebra with compact $\mathrm{Prim}(A)$ and with strict comparison of full positive elements  by traces.
	Then $A$ has commutator bounds $(7,C_1)$, with no approximations, for some universal constant $C_1$. If $A$ is unital then it has commutator bounds
$(3,C_2)$, with no approximations, for some universal constant $C_2$.

\end{theorem}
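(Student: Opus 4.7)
The plan for part (1) is to rerun the proof of Theorem \ref{mainpure}, substituting Corollary \ref{halfnormbounded} for Corollary \ref{halfnorm} and tracking fullness at every step. Given $h\in\overline{[A,A]}$ with finite-sum commutator approximations converging to $h$, first pass to the hereditary $\Cstar$-subalgebra $B$ of $A$ generated by these (countably many) approximating elements together with a positive $c\in A_+$ for which $(c-\varepsilon)_+$ is full in $A$ (such $c$ exists by compactness of $\mathrm{Prim}(A)$). Since $c\in B$ is full in $A$, the subalgebra $B$ is a full hereditary subalgebra of $A$, so $\mathrm{Prim}(B)\cong\mathrm{Prim}(A)$ is compact and $B$ inherits strict comparison of full positive elements by traces (via the identification $B\otimes\mathcal K\cong A\otimes\mathcal K$). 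Moreover $B$ is $\sigma$-unital and pure, so we may assume from the outset that $A$ itself is $\sigma$-unital.

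Next, apply Lemma \ref{geomseries} to a strictly positive $e_0\in A_+$ to produce pairwise orthogonal $e_1,e_2,\dots\in A_+$ with $e_j\precsim e_{j+1}^{\oplus 5}$ and $e_0\precsim e_1^{\oplus 11}$. Inspection of the proof of Lemma \ref{geomseries} gives $d_\tau(e_j)=\frac{1}{5\cdot 2^{j-1}}d_\tau(e_0)>0$ for every nonzero $\tau\in\QT(A)$, so each $e_j$ is full in $A$ and thus $\mathrm{Prim}(\her(e_j))$ is compact. Corollary \ref{halfnormbounded} therefore applies inside every $\her(e_j)$, and Theorem \ref{fackstechnique} yields finite commutator bounds $(\overline M,\overline C)$ for $A$ with no approximations. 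The reduction from $\overline M$ commutators down to seven proceeds exactly as in the last paragraph of the proof of Theorem \ref{mainpure}: by almost divisibility pick $b$ with $(2\overline M+1)[b]\leq[e_0]\leq(2\overline M+2)[b]$, use Lemma \ref{strongcomp} to realize $f\in A_+$ with $b^{\oplus\overline M}\sim f$ and $e_0\precsim f^{\oplus 5}$, extract five commutators via Lemma \ref{aplusnb} leaving a remainder in $\her(f)\cong M_{\overline M}(\her(b))$, and apply Theorem \ref{matrixreduction}(ii) to produce the last two. This gives part (1) with a universal $C_1$.

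For part (2), the strategy is to peel off one commutator via Lemma \ref{ddroplemma} and finish inside a matrix corner. Using Proposition \ref{ddembeds}, fix a unital embedding $\phi:\mathcal Z_{n-1,n}\to A$ with $n$ sufficiently large. Inside $\mathcal Z_{n-1,n}$ construct a positive contraction $d_0$ (a continuous bump function times a rank-one projection respecting both endpoint restrictions) satisfying $4[d_0]\leq[1_{\mathcal Z_{n-1,n}}]$ and such that $\her(1-d_0)$ contains a corner of the form $M_k(B_0)$ with $k\geq 7$; for large $n$ the generic fiber $M_{n(n-1)}$ leaves ample room. Set $d=\phi(d_0)\in A$. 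Then $d\oplus d\oplus d\precsim 1-d$ by pureness and Lemma \ref{strongcomp}, and $1-d$ is automatically full in $A$ (any primitive ideal containing $1-d$ would force $3[1]\leq 0$ in the nonzero quotient, a contradiction). Given $h\sim_{\Tr}0$, Lemma \ref{ddroplemma} yields $h=[x,y]+h'$ with $h'\in\her(1-d)$ and $\|x\|\cdot\|y\|\leq 2\|h\|$. Since $\her(1-d)$ is a full hereditary subalgebra of the unital algebra $A$, every trace on $\her(1-d)$ extends to a trace on $A$, so $h'\sim_{\Tr}0$ in $\her(1-d)$. The algebra $B=\phi(B_0)$ is a hereditary subalgebra of the nuclear algebra $\mathcal Z_{n-1,n}$ and inherits pureness, compact primitive spectrum, and strict comparison by traces (2-quasitraces equal traces by Haagerup's theorem via nuclearity), so by part (1) it has commutator bounds $(7,C_1)$. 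Applying Theorem \ref{matrixreduction}(ii) inside $M_k(B)\subseteq\her(1-d)$ with $k\geq 7$ writes $h'$ as a sum of two commutators with controlled norms. Combining yields $h=\sum_{i=1}^{3}[x_i,y_i]$ with $\sum\|x_i\|\cdot\|y_i\|\leq C_2\|h\|$, the universal $C_2$ assembled from the norm bounds in Lemma \ref{ddroplemma}, Theorem \ref{matrixreduction}(ii), and part (1).

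The main obstacle is the concrete engineering of $d_0\in\mathcal Z_{n-1,n}$ that simultaneously achieves $4[d_0]\leq[1]$ and makes $\her(1-d_0)$ contain a matrix corner $M_k(B_0)$ of size $k\geq 7$, while still respecting the endpoint conditions $M_{n-1}\otimes 1_n$ and $M_n\otimes 1_{n-1}$ of the dimension drop. This forces a careful choice at the generic fiber and of the support of the bump function, and is the reason we need $n$ large. A secondary subtlety is the trace-extension step in part (2), which uses fullness of $1-d$ together with unitality of $A$; and, in part (1), one should verify that strict comparison of full positive elements by traces really is preserved under passage to a full hereditary subalgebra, which follows from Brown's stabilization theorem.
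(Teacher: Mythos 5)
Part (1) of your proposal follows the paper's route and is essentially correct, with one caveat about fullness of the $e_j$'s. Your argument that $d_\tau(e_j)>0$ for every nonzero $\tau\in\QT(A)$ forces $e_j$ to be full presupposes that lower semicontinuous $2$-quasitraces separate the closed two-sided ideals of $A$, which is not automatic (the quotient by the ideal generated by $e_j$ could a priori carry no nonzero lower semicontinuous $2$-quasitraces). The paper instead reads off fullness directly from the subequivalence chain: $e_0$ is strictly positive and hence full, $e_0\precsim e_1^{\oplus 11}$ forces $e_1$ full, and $e_i\precsim e_{i+1}^{\oplus 5}$ pushes fullness up by induction. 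Aside from that, your reduction to the $\sigma$-unital case, your use of Corollary~\ref{halfnormbounded}, Theorem~\ref{fackstechnique}, and the descent from $\overline{M}$ to $7$ commutators all track the paper.

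Part (2) has a genuine flaw at its core. You wish to apply part (1) to $B=\phi(B_0)$ where $B_0$ lives inside the dimension-drop algebra $\mathcal{Z}_{n-1,n}$, asserting that $B$ ``inherits pureness, compact primitive spectrum, and strict comparison by traces.'' But $\mathcal{Z}_{n-1,n}$ is \emph{not} pure (it is subhomogeneous with finite-dimensional representations; its Cuntz semigroup is nowhere near almost divisible), so nothing can be inherited from it; and $\phi(B_0)$ is not a hereditary $\Cstar$-subalgebra of $A$ (a unital embedding does not carry hereditary subalgebras of $\mathcal{Z}_{n-1,n}$ to hereditary subalgebras of $A$), so $B$ does not inherit pureness from $A$ either. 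There is also no reason the remainder $h'\in\her(1-d)$ from Lemma~\ref{ddroplemma} should land in $M_k(B)$. The paper avoids all of this: it applies \cite[Proposition~5.1]{rordam-winter} to the unital embedding $\mathcal{Z}_{n,n+1}\to A$ to obtain $f_1,\dots,f_n\in A_+$ with $f_1\sim f_i$ and $1-\sum_i f_i\precsim (f_1-\epsilon)_+$; Lemma~\ref{ddroplemma} (with $d=1-\sum_i f_i$) deposits the remainder in $\her(f_1,\dots,f_n)\cong M_n(\her(f_1))$, and the point is that $\her(f_1)$ \emph{is} a full hereditary subalgebra of the pure algebra $A$, hence pure with compact primitive spectrum and strict comparison of full positives by traces, so part (1) gives it commutator bounds $(7,C_1)$; Theorem~\ref{matrixreduction}(ii) with $n\geq 7$ then produces the last two commutators. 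The object to which you should apply part (1) is a hereditary subalgebra of $A$ itself, not anything inside the non-pure $\mathcal{Z}_{n-1,n}$; the ``main obstacle'' you describe (engineering a particular $d_0$ in $\mathcal{Z}_{n-1,n}$) is a self-imposed one that the R\o{}rdam--Winter proposition renders unnecessary.
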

\begin{proof}
Let us  first show that   $A$    has finite commutator bounds.  To this end, we   proceed as in the proof of Theorem \ref{mainpure}, but with a few small modifications. (The main difference with  Theorem \ref{mainpure} being that now we only require strict comparison by traces on full positive elements.)  Lemma \ref{geomseries} is applicable to $A$, yielding a  sequence of pairwise orthogonal positive elements $(e_i)_{i=1}^\infty$ such that $e_0\precsim e_1^{\oplus 11}$ and $e_i\precsim e_{i+1}^{\oplus 7}$ for all $i$. Here $e_0\in A_+$ is strictly positive. The hereditary subalgebras 
$\her(e_i)$ have compact primitive spectrum for all $i$ (since they are full in $B$). Hence, Corollary \ref{halfnormbounded} is applicable in each of them.
Now Theorem \ref{fackstechnique}  implies that $A$ has finite commutator bounds $(n,C)$ with no approximations.  

The arguments for reducing  the number of commutators to 7 in the proof of Theorem \ref{mainpure} apply here as well.

Let us now show that  if $A$ is unital  then the number of commutators can be reduced to 3.  By Proposition \ref{ddembeds},  the dimension drop $\Cstar$-algebra $\mathcal Z_{n,n+1}$ maps  unitally into $A$.  By \cite[Proposition 5.1]{rordam-winter}, there exist $f_1,\dots,f_n\in A_+$ such that $f_1\sim f_i$ for all $i$ and $1-\sum_{i=1}^n f_i\precsim (f_1-\epsilon)_+$ for some $\epsilon>0$.
Let us assume without loss of generality that $n\geq 3$. By Lemma \ref{ddroplemma}, for each  $h\in \overline{[A,A]}$ we have $h=[x_1,y_1]+h'$, with $h'\in \her(f_1,\dots,f_n)$.
But $\her(f_1,\dots,f_n)\cong M_n(\her(f_1))$, and $\her(f_1)$ has commutator bounds $(n,C)$ with no approximations. It follows by Theorem \ref{matrixreduction} that 
$h'=[x_2,y_2]+[x_3,y_3]$. 
\end{proof}

%

\begin{example}\label{freeproducts}
Let $(A_i,\tau_i)$, with $i=1,2,\dots$, be unital $\Cstar$-algebras with   faithful tracial states. Assume that 
for infinitely many indices $i_1,i_2,\dots$ there exist unitaries $u_{i_n}\in A_{i_n}$ such that 
$\tau_{i_n}(u_{i_n})=0$ for all $n$. Let $A=A_1*A_2*\cdots$ and $\tau=\tau_1*\tau_2*\cdots$ be the reduced free product $\Cstar$-algebra and tracial state. 
It is known that $A$ is simple  and $\tau$ is its  unique
tracial state  (by \cite{avitzour}). Furthermore, by \cite[Proposition 6.3.2]{nccw}, $A$ has  strict comparison of positive elements by the trace  $\tau$.
It follows that $A$ is pure and, by Theorem \ref{strictbytraces}, that the only bounded 2-quasitraces on $A$ are the scalar multiples of $\tau$.
By  Theorem \ref{reductions}, if  $h\in A$ is such that $\tau(h)=0$  then $h$ is a sum of $3$ commutators.
\end{example}

\section{Sums of nilpotents of order 2}\label{nilpotents}
Let $A$ be a $\Cstar$-algebra. Let $N_2=\{x\in A\mid x^2=0\}$; i.e., $N_2$ denotes the set of   nilpotent elements  of order 2 in $A$ (a.k.a, square zero elements). 

\begin{lemma}\label{nilascomm}
Let $z\in N_2$. Then $z=[u,v]$ and $z+z^*=[w^*,w]$ for some $u,v,w\in A$.
\end{lemma}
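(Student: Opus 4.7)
The plan is to build $u$, $v$, $w$ from the polar decomposition $z = v_0|z|$, where $v_0 \in A^{**}$ is the polar partial isometry. From $z^2 = 0$ one extracts the orthogonality relations $v_0^2 = 0$, $|z|\cdot|z^*| = 0$, $|z|^\alpha v_0 = 0$ and $v_0^*|z|^\alpha = 0$ for all $\alpha > 0$ (by functional calculus on $|z|$, since $v_0$ maps into the range of $|z^*|$, which lies in the kernel of $|z|$), and, using $|z^*| = v_0|z|v_0^*$, the transfer identity $|z^*|^{1/2}v_0 = v_0|z|^{1/2}$ (obtained from $|z^*|^{1/2} = v_0|z|^{1/2}v_0^*$ by squaring).

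The key technical step will be to set $u := v_0|z|^{1/2}$ and to show that $u \in A$ even though $v_0$ is only in $A^{**}$. For this I will exhibit $u$ as the norm limit of $z(|z|+\tfrac{1}{n})^{-1/2}$: the functions $g_n(t) := t(t+\tfrac{1}{n})^{-1/2}$ converge to $t^{1/2}$ uniformly on $[0,\|z\|]$ with an explicit $O(n^{-1/2})$ error, so $v_0 g_n(|z|) = z(|z|+\tfrac{1}{n})^{-1/2} \to v_0|z|^{1/2}$ in norm, and each approximant lies in $A$ because $z \in A$ and $(|z|+\tfrac{1}{n})^{-1/2} \in A^\sim$. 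With $u$ in hand, the first identity $z = [u,|z|^{1/2}]$ is immediate: $u|z|^{1/2} - |z|^{1/2}u = v_0|z| - |z|^{1/2}v_0|z|^{1/2} = z - 0$, the second term vanishing by $|z|^{1/2}v_0 = 0$.

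For the second identity I will set $a := u - u^*$ and $b := |z^*|^{1/2} - |z|^{1/2}$, and take $w := \tfrac{1}{2}(a+b) \in A$. Since $a^* = -a$ and $b^* = b$, the $a^2$ and $b^2$ terms cancel in $w^*w - ww^*$, leaving $[w^*,w] = \tfrac{1}{2}[b,a]$. Using the products $|z^*|^{1/2}u = z$, $|z|^{1/2}u^* = z^*$, $u|z|^{1/2} = z$, $u^*|z^*|^{1/2} = z^*$ together with the complementary vanishings $|z|^{1/2}u = u^*|z|^{1/2} = |z^*|^{1/2}u^* = u|z^*|^{1/2} = 0$ (all consequences of $|z|^{1/2}v_0 = 0$, $v_0^*|z|^{1/2} = 0$, and $|z|^{1/2}|z^*|^{1/2} = 0$), I will compute $ba = z+z^*$ and $ab = -(z+z^*)$, whence $[b,a] = 2(z+z^*)$ and $[w^*,w] = z+z^*$. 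The main hurdle I anticipate is the approximation argument showing $u \in A$; once that is settled, everything else reduces to careful bookkeeping with polar-decomposition identities flowing from $z^2 = 0$ and the orthogonality of supports of $|z|$ and $|z^*|$.
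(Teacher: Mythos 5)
Your proposal is correct but takes a genuinely different route from the paper's. The paper factors through the universal $\Cstar$-algebra generated by a square-zero contraction, namely $M_2(C_0(0,1])$: one obtains a $*$-homomorphism $\phi\colon M_2(C_0(0,1])\to A$ sending $\tilde z=(\begin{smallmatrix}0&t\\0&0\end{smallmatrix})$ to $z$, writes down explicit $2\times 2$ matrix commutators representing $\tilde z$ and $\tilde z+\tilde z^*$ inside $M_2(C_0(0,1])$, and transports them along $\phi$. There the question of whether the auxiliary elements land in $A$ rather than merely in $A^{**}$ never arises. You instead work directly with the polar decomposition $z=v_0|z|$ in $A^{**}$, and your main technical burden --- showing $u:=v_0|z|^{1/2}\in A$ via the norm limit of $z(|z|+\tfrac1n)^{-1/2}$ --- is precisely what the universal-algebra reduction hides, since $\phi$ carries $(\begin{smallmatrix}0&t^{1/2}\\0&0\end{smallmatrix})$, manifestly an element of $M_2(C_0(0,1])$, to your $u$. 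The bookkeeping you outline does check out: $|z|\cdot|z^*|=0$, $|z|^{\alpha}v_0=0=v_0^*|z|^{\alpha}$, and $|z^*|^{1/2}v_0=v_0|z|^{1/2}$ all follow from $z^2=0$, giving $[u,|z|^{1/2}]=v_0|z|=z$, and with $w=\tfrac12\bigl((u-u^*)+(|z^*|^{1/2}-|z|^{1/2})\bigr)$ one indeed finds $ba=z+z^*$, $ab=-(z+z^*)$, so $[w^*,w]=\tfrac12[b,a]=z+z^*$. Your route is more self-contained and elementary, at the cost of more explicit computation; the paper's is shorter once one grants the universal $\Cstar$-algebra fact.
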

 \begin{proof}
 We may assume that $\|z\|=1$. The universal $\Cstar$-algebra generated by  a square zero contraction is $M_2(C_0(0,1])$. Thus,  there exists  a homomorphism 
 $\phi\colon M_2(C_0(0,1])\to A$ such that $\tilde z:=(\begin{smallmatrix} 0 & t\\ 0 & 0\end{smallmatrix})\stackrel{\phi}{\mapsto}z$. So it suffices to express 
 $\tilde z$ and $\tilde z+\tilde z^*$ as  commutators. Indeed,
 \begin{align*}
 \begin{pmatrix}
 0 & t\\
 0 & 0
 \end{pmatrix} &=\Big[\begin{pmatrix}
 t^{\frac 1 2} & 0\\
 0 & 0
 \end{pmatrix},
 \begin{pmatrix}
 0 & t^{\frac 1 2}\\
 0 & 0
 \end{pmatrix}\Big],\\
 \begin{pmatrix}
 0 & t\\
 t & 0
 \end{pmatrix} &=\frac 1 4\Big[\begin{pmatrix}
 t^{\frac 1 2} & -t^{\frac 1 2}\\
 t^{\frac 1 2} & -t^{\frac 1 2}
 \end{pmatrix},
 \begin{pmatrix}
 t^{\frac 1 2} & t^{\frac 1 2}\\
 -t^{\frac 1 2} & -t^{\frac 1 2}
 \end{pmatrix}\Big].\qedhere
 \end{align*}
 \end{proof}
 
The preceding lemma implies that  linear span of $N_2$ is contained in $[A,A]$. In Theorem \ref{commutatornil} below we show, conversely, that if $A$ is a pure unital $\Cstar$-algebra then every commutator is expressible as a  sum of at most $14\times 256$ order 2 nilpotents.

\begin{lemma}\label{matricesnilpotents}
Let $A$ be a $\Cstar$-algebra and $a,b\in A$. 
\begin{enumerate}[(i)]
    \item If $a,b\in N_2$ then $[a,b]$ is a sum of 3 order 2 nilpotents.
    \item If $A=M_2(B)$ or $A=M_3(B)$ then $[a,b]$ is at most a  sum of 14 order 2 nilpotents.
\end{enumerate}
In both cases, the norm of the nilpotents is bounded by $C\|a\|\cdot \|b\|$, where $C$ is a universal constant.
\end{lemma}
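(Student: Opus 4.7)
For part (i), my plan is to establish the algebraic identity
\[
[a,b] \;=\; (1+a)\,b\,(1-a)\;-\;b\;+\;aba,
\]
viewed in the unitization $A^\sim$. Expanding the first summand shows that the identity actually lives in $A$ itself: $[a,b]=(b+ab-ba-aba)+(-b)+aba$. Each of the three summands is an order-two nilpotent. The first is a conjugate of $b$ by the element $1+a$, which is invertible in $A^\sim$ with inverse $1-a$ (because $a^2=0$), so its square equals $(1+a)\,b^2\,(1-a)=0$; the second squares to $b^2=0$; the third satisfies $(aba)^2=a(ba^2b)a=0$. For the norm bound, I would apply the identity to the normalized generators $a/\|a\|$ and $b/\|b\|$ (when these are nonzero, the case $a=0$ or $b=0$ being trivial) and then multiply each of the three resulting summands by $\|a\|\|b\|$; in the normalized setting the first summand has norm at most $4$ and the others at most $1$, so each rescaled nilpotent has norm at most $4\|a\|\|b\|$.

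For part (ii), my plan is to work in $A=M_n(B)$ with $n\in\{2,3\}$ and expand $a$ and $b$ in the matrix unit basis, $a=\sum_{i,j}a_{ij}\otimes e_{ij}$ and $b=\sum_{k,l}b_{kl}\otimes e_{kl}$. The key observation is that any off-diagonal elementary tensor $a_{ij}\otimes e_{ij}$ with $i\neq j$ is itself order-two nilpotent, since $e_{ij}^2=0$ for $i\neq j$. Expanding $[a,b]$ bilinearly, each elementary commutator $[a_{ij}\otimes e_{ij},\;b_{kl}\otimes e_{kl}]$ falls into one of four types: \emph{(a)} both matrix unit products vanish on each side, so the term is zero; \emph{(b)} exactly one of $e_{ij},\,e_{kl}$ is diagonal and the nonzero contribution occupies a single off-diagonal slot, hence is itself square-zero; \emph{(c)} both matrix units are off-diagonal and their products contribute to a diagonal slot, producing a commutator of two square-zero elements of $M_n(B)$ which, by part (i), is a sum of three order-two nilpotents; and \emph{(d)} both factors are diagonal at the same slot, producing the ``trace-like'' term $[a_{ii},b_{ii}]\otimes e_{ii}$. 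A careful enumeration and grouping of these contributions should yield a decomposition of $[a,b]$ into at most $14$ nilpotents for both $n=2$ and $n=3$, and the norm bounds $\|n_k\|\le C\|a\|\|b\|$ follow because each step in the construction is bilinear in $a$ and $b$.

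The main obstacle will be the type \emph{(d)} contributions: $[a_{ii},b_{ii}]\otimes e_{ii}$ is a diagonal matrix with entry in $[B,B]$, which is not \emph{a priori} a sum of few order-two nilpotents of $M_n(B)$, and it carries the full ``trace'' of the commutator. My plan for handling this is to regroup each such term with the (also diagonal) contribution coming from a type \emph{(c)} commutator of the form $[a_{ij}\otimes e_{ij},\,b_{ji}\otimes e_{ji}]$ for some $i\neq j$, whose output is $a_{ij}b_{ji}\otimes e_{ii}-b_{ji}a_{ij}\otimes e_{jj}$, and then to rewrite the aggregated diagonal matrix as a commutator of two square-zero elements of $M_n(B)$ built entirely from off-diagonal matrix unit combinations. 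This is possible \emph{inside} $M_n(B)$, without passing to the unitization, because in $M_n(B)$ with $n\geq2$ one can transfer elements of $B$ between the $(i,i)$ and $(j,j)$ diagonal slots using elements of the form $c\otimes e_{ij}$. Applying part (i) to the resulting commutator of square-zeros yields three further nilpotents per regrouped bundle, and an explicit bookkeeping argument, distinguishing the cases $n=2$ and $n=3$, confirms that the total stays within $14$.
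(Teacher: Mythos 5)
Your proof of part (i) is correct and essentially the same as the paper's: both express $[a,b]$ as a conjugate of $b$ by an invertible of the form $1 \pm a$, minus $b$, plus $\pm aba$ (the paper, following Marcoux, uses $[a,b] = (ab + aba - b - ba) + (-aba) + b$, yours differs only by the sign in the conjugator), and the bound $4\|a\|\cdot\|b\|$ on each summand after normalizing is right.

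For part (ii), however, the paper gives no proof at all; it simply cites Marcoux's Theorem 5.6(ii) for $n=2$ and Theorem 3.5(ii) for $n=3$, remarking only that the unitality hypothesis in those theorems is not actually used. Your attempt at a self-contained argument is therefore a genuinely different route, but it has a concrete gap at exactly the point you flag. You correctly identify the type (d) diagonal terms $[a_{ii},b_{ii}]\otimes e_{ii}$ as the obstruction, but your plan for them is an assertion rather than a proof. In particular, the claim that the aggregated diagonal contributions can be rewritten as a single commutator of two square-zero elements built from off-diagonal matrix units is false as stated: a commutator $[c\otimes e_{ij}, d\otimes e_{ji}]$ with $i\ne j$ produces a diagonal matrix whose $(i,i)$ entry is the product $cd$ and whose $(j,j)$ entry is $-dc$, and the diagonal of a general $[a,b]$, involving genuine commutators $[a_{ii},b_{ii}]$, does not have this constrained product shape. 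To push the idea through one needs an extra ingredient: compare $[a_{ii},b_{ii}]\otimes e_{ii}$ with $[a_{ii}\otimes e_{ij}, b_{ii}\otimes e_{ji}]$, note the discrepancy is $b_{ii}a_{ii}\otimes(e_{ii}-e_{jj})$, and then produce a norm-controlled factorization $b_{ii}a_{ii}=uv$ (say $u=w|b_{ii}a_{ii}|^{1/2}$, $v=|b_{ii}a_{ii}|^{1/2}$) so that this discrepancy becomes $[u\otimes e_{ij},v\otimes e_{ji}]$, another commutator of square-zeros. Neither the comparison step, nor the factorization, nor the resulting norm bounds appear in your sketch, and the bookkeeping needed to land the count at $14$ for both $n=2$ and $n=3$ is deferred rather than carried out. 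As written, part (ii) is not proved.
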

\begin{proof}
(i) Let $a,b\in N_2$. Normalizing $a$ and $b$ if necessary we may assume  that they are contractions. Then, as pointed out in \cite[Lemma 3.2]{marcoux02}, 
\[ 
[a,b] = (ab + aba -b - ba) + (-aba) + (b),
\] 
and each term on the right hand side is an order 2 nilpotent of norm at most 4. 

(ii) This is proven by  Marcoux in  \cite[Theorem 5.6 (ii)]{marcoux02}, for $n=2$,  and in \cite[Theorem 3.5(ii)]{marcoux02} for $n=3$ (see also remarks after \cite[Theorem 5.1]{marcoux06}). Although in the statements of these theorems Marcoux assumes that $B$ is unital, a quick  inspection of the proofs reveals that this is not used.
\end{proof}

\begin{theorem}\label{commutatornil}
Let $A$ be a pure unital $\Cstar$-algebra and $a,b\in A$.
\begin{enumerate}[(i)]
    \item Then $[a,b]$ is a sum of at most $14\times 256$ nilpotents of order 2. The norm of the order 2 nilpotents is bounded by $C\|a\|\cdot \|b\|$, where
$C$ is a universal constant.
    \item If $[a,b]$ is selfadjoint then it is a sum of $14\times 256$   commutators of the form $[x^*,x]$ with $x\in N_2$, with $\|x\|^2\leq C'\|a\|\cdot \|b\|$ and $C'$ is a universal constant.
\end{enumerate}
\end{theorem}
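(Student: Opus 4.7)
The plan is to reduce $[a,b]$ to a finite sum of commutators sitting inside a matrix corner $M_3(B)\subseteq A$, and then invoke Lemma~\ref{matricesnilpotents}(ii) to turn each such commutator into $14$ nilpotents. The factor $256$ in the final bound is a universal constant that absorbs the overhead of the reduction; the $14$ is then direct from Lemma~\ref{matricesnilpotents}(ii).

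First I would use Proposition~\ref{ddembeds} (applicable since $A$ is pure unital) to produce pairwise orthogonal, pairwise Cuntz-equivalent positive elements $f_1,f_2,f_3\in A_+$ such that $d:=1-(f_1+f_2+f_3)$ satisfies $d^{\oplus 3}\precsim 1-d$. Setting $B:=\her(f_1)$, I identify $\her(f_1+f_2+f_3)\cong M_3(B)$ via partial isometries $v_i\in A^{**}$ with $v_i^*v_i=f_1$ and $v_iv_i^*=f_i$. Because $v_k^*v_\ell=\delta_{k\ell}f_1$, this isomorphism sends square-zero elements of $M_3(B)$ to square-zero elements of $A$, which is the bridge that lets me transport nilpotents back to $A$. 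Applying Lemma~\ref{ddroplemma} with this $d$ to $h:=[a,b]$ gives $h=[u,v]+h_1$ with $h_1\in M_3(B)$ and $\|u\|\cdot\|v\|\leq 2\|h\|$. The piece $h_1$ has universal trace zero in $M_3(B)$, so Lemmas~\ref{diagonalcommutator} and~\ref{zerodiagonal} together with Theorem~\ref{matrixreduction}(ii) express it as a small number of commutators in $M_3(B)$, each of which becomes $14$ nilpotents via Lemma~\ref{matricesnilpotents}(ii).

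To absorb the leftover external commutator $[u,v]\in A$ (which does not yet lie in any matrix corner) I would iterate the same reduction inside a geometric sequence of pairwise orthogonal positive elements $e_1,e_2,\ldots$ in $A$ supplied by Lemma~\ref{geomseries}; each $\her(e_k)$ carries its own $M_3$-corner, and the Fack-style orthogonality argument of Theorem~\ref{fackstechnique} lets me regroup the resulting series into a uniformly bounded total number of commutators in matrix corners, which a crude bookkeeping estimate caps at $256$. The hard part is exactly this bookkeeping: ensuring a geometric contraction of the residual external commutators across iterations (analogous to the half-norm reduction of Corollary~\ref{halfnorm}) and verifying that the pureness of $A$ passes to the hereditary subalgebras involved strongly enough to supply the finite commutator bounds that Theorem~\ref{matrixreduction}(ii) consumes at each stage.

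For part (ii), once (i) yields $[a,b]=\sum_{i=1}^{14\cdot 256}z_i$ with $z_i\in N_2$, the selfadjointness of $[a,b]$ gives $[a,b]=\tfrac12\sum_i(z_i+z_i^*)$. Lemma~\ref{nilascomm} expresses each $z_i+z_i^*$ as $[w_i^*,w_i]$ with $w_i\in N_2$; setting $x_i:=w_i/\sqrt 2$ (still in $N_2$) yields $[x_i^*,x_i]=\tfrac12[w_i^*,w_i]$ and hence $[a,b]=\sum_i[x_i^*,x_i]$, keeping the count $14\cdot 256$. The norm control $\|x_i\|^2\leq C'\|a\|\cdot\|b\|$ follows from the explicit matrix formulas in the proof of Lemma~\ref{nilascomm} together with the norm bounds already tracked through (i).
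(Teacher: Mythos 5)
Your proposal takes a genuinely different route from the paper, and the route has a fundamental gap. The theorem assumes only that $A$ is pure and unital---no hypothesis that 2-quasitraces are traces, and no strict comparison by traces. Your reduction of the residual $h_1\in M_3(B)$ to commutators via Theorem~\ref{matrixreduction}(ii) requires $M_3(B)$ (equivalently $B=\her(f_1)$) to have finite commutator bounds, and all routes in the paper to finite commutator bounds (Theorem~\ref{mainpure}, Theorem~\ref{reductions}, Corollary~\ref{halfnorm}) need those extra tracial hypotheses. Under the stated hypotheses you have no reason to believe $h_1\in\overline{[M_3(B),M_3(B)]}$ can be written as a small number of commutators at all, so the chain breaks at the first step after Lemma~\ref{ddroplemma}. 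A second, related problem: Lemma~\ref{ddroplemma} produces a residual $h_1$ that is \emph{not} a commutator, only trace-zero, whereas Lemma~\ref{matricesnilpotents}(ii) applies to a single commutator $[a,b]$ in $M_2(B)$ or $M_3(B)$; you lose the crucial ``commutator'' structure in the decomposition step. Finally, the iterative Fack-style absorption of the external commutator $[u,v]$ is not set up for nilpotent counts: Theorem~\ref{fackstechnique} and Corollary~\ref{halfnorm} are about sums of additive commutators, not nilpotents, and they again require tracial hypotheses not present here.

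The paper avoids all of this by a one-shot combinatorial decomposition that never leaves the realm of commutators: pick four positive elements $f_1,f_2,f_3,f_4$ (images of a partition of unity on $[0,1]$ under a unital map $\mathcal Z_{2,3}\to A$ supplied by Proposition~\ref{ddembeds}) with $f_1+f_2+f_3+f_4=1$, $\her(f_1+f_2+f_3)\cong M_2(\cdot)$, $\her(f_2+f_3+f_4)\cong M_3(\cdot)$, and suitable orthogonality relations. Expanding $[a,b]=\sum_{1\leq i,j,k,l\leq 4}[f_iaf_j,f_kbf_l]$ gives $256=4^4$ terms, each of which is still a commutator. Each term either lands in an $M_2$- or $M_3$-corner (so Lemma~\ref{matricesnilpotents}(ii) gives $14$ nilpotents without any trace hypothesis), or, when all four indices appear, the orthogonality forces the factors to be order-$2$ nilpotents themselves, and Lemma~\ref{matricesnilpotents}(i) handles it with at most $9$ nilpotents. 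Thus $256$ is an exact count from the expansion, not a crude bound from iteration. Your treatment of part (ii), by contrast, is essentially correct and matches the paper's: average $\sum z_i$ with $\sum z_i^*$, apply Lemma~\ref{nilascomm} to each $z_i+z_i^*$, and rescale.
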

\begin{proof}
(i)	
Let us choose $s_1$ and $s_2$ such that $0<s_1<s_2<1$. Let $f_1,f_2,f_3,f_4\in C([0,1])_+$ be functions such that 
$f_1$ is supported on $[0,s_1)$, $f_2$ is supported on $(0,s_2)$, $f_3$ is supported on
$(s_1,1)$, $f_4$ is supported on $(s_2,1]$, and $f_1+f_2+f_3+f_4=1$.  Let us regard $C([0,1])$
embedded in $\mathcal Z_{2,3}$ via the map $f\mapsto f\cdot 1_{6}$. Further, by Proposition \ref{ddembeds}, $\mathcal Z_{2,3}$ maps unitally into $A$. 
In this way, we can view $f_1,f_2,f_3,f_4$ as elements of $A$. Then\[
[a,b]=\sum_{1\leq i,j,k,l\leq 4}[f_iaf_j,f_kbf_l].
\]We will show that each of the $256$ terms on the right hand side is expressible as a sum of at most 14 nilpotents of order 2. 

Let us examine the commutator $[f_iaf_j,f_kbf_l]$.
Since $\her_A(f_1+f_2+f_3)\cong M_2(B)$ and $\her_A(f_2+f_3+f_4)\cong M_3(B')$ for some $B,B'\subseteq A$,
 if not all four functions appear in $[f_iaf_j,f_kbf_l]$ then  by Lemma \ref{matricesnilpotents}  this commutator is expressible as a sum of at most 14 order 2 nilpotents.

Let us examine the commutators $[f_iaf_j,f_kbf_l]$ where all four functions appear; i.e., such that $\{i,j,k,l\}=\{1,2,3,4\}$.
Let us assume that $i=1$. If $l=3$ or $l=4$ then $[f_iaf_j,f_kbf_l]$ is itself an order 2 nilpotent and we are done. Suppose  that $l=2$. Since we are assuming that all four indices must appear,  either $k=4$ and $j=3$, or $k=3$ and $j=4$. Suppose that $k=4$ and $j=3$. Since $f_1af_3$ and $f_4bf_2$  are both order 2 nilpotents,  $[f_1af_3,f_4bf_2]$
is a sum of three order 2 nilpotents by  Lemma \ref{matricesnilpotents} (i). Suppose now that $k=3$ and $j=4$. The commutator is then $[f_1af_4,f_3bf_2]$, which  can be dealt with as follows:
\begin{multline*}
[f_1af_4,f_3bf_2] =[f_1a(f_4f_3)^{\frac 1 2},(f_4f_3)^{\frac 1 2}bf_2]\\
+[(f_4f_3)^{\frac 1 2}b(f_2f_1)^{\frac 1 2},(f_2f_1)^{\frac 1 2}a(f_4f_3)^{\frac 1 2}]\\
+[(f_2f_1)^{\frac 1 2}af_4,f_3b(f_2f_1)^{\frac 1 2}].
\end{multline*}
Each of the commutators on the right side is a commutator of order 2 nilpotents  and is thus expressible as a sum of three order 2 nilpotents. So, $[f_1af_4,f_3bf_2]$ is expressible as a sum of 9 nilpotents of order 2.

Let us assume now that $i\neq 1$. As argued above, we may reduce ourselves to the case that one of the indices $j,k,l$ is 1. On the grounds of the symmetry of our set-up,  any of these cases  can be dealt with just as we did above for $i=1$.
(Notice that only the orthogonality relations between the functions were used in our analysis; the asymmetry of the dimension drop $\Cstar$-algebra played not role.) We are thus done.

(ii) Suppose that $[a,b]$ is selfadjoint. By (i), it can be written   as a sum with $14\times 256$ terms, each of the form  $z+z^*$, with $z\in N_2$. In turn, each of these terms is expressible as a  commutator of the form $[x^*,x]$, with $x\in N_2$, by Lemma  \ref{nilascomm}.
\end{proof}

\begin{theorem}\label{nilcommutatorsthm}
Let $A$ be a pure unital $\Cstar$-algebra whose bounded 2-quasitraces are traces. Then
the sets  $\overline{[A,A]}$, $[A,A]$, and the linear span of $N_2$, are equal. Each $h\in A$ such that $h\sim_{\Tr}0$ is expressible as a sum of 
$3\times 14\times 256$ square zero elements. If $h$ is selfadjoint then it is also expressible as 
sum of $3\times 14\times 256$ commutators of the form $[x^*,x]$, with $x\in N_2$. 
\end{theorem}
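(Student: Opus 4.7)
The plan is to chain together Theorem~\ref{reductions}, Theorem~\ref{commutatornil}, and Lemma~\ref{nilascomm}; most of the work has already been done, so this result should be essentially assembly.

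First I would verify the equality of the three sets through the chain $\mathrm{span}(N_2) \subseteq [A,A] \subseteq \overline{[A,A]} \subseteq \mathrm{span}(N_2)$. The first inclusion is immediate from Lemma~\ref{nilascomm}, since every $z \in N_2$ is itself a commutator. The middle inclusion is trivial. The third inclusion will follow once the quantitative statement is proved.

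For the quantitative statement, let $h \in A$ satisfy $h \sim_{\Tr} 0$. Since $A$ is pure, $\Cu(A)$ is almost unperforated, so $A$ has strict comparison of positive elements by 2-quasitraces; coupled with the hypothesis that 2-quasitraces are traces, Remark~\ref{sameas} yields strict comparison of positive elements by traces, which implies strict comparison of full positive elements by traces. Since $A$ is unital, $\mathrm{Prim}(A)$ is compact. Hence Theorem~\ref{reductions} applies and gives $h = \sum_{i=1}^3 [a_i, b_i]$ with no approximations. Theorem~\ref{commutatornil}(i) then decomposes each commutator $[a_i, b_i]$ into at most $14 \times 256$ order-2 nilpotents, producing $h$ as a sum of at most $3 \times 14 \times 256$ elements of $N_2$.

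For the selfadjoint case, the key observation is that one should \emph{not} try to apply Theorem~\ref{commutatornil}(ii) to the individual $[a_i,b_i]$, since those commutators are not selfadjoint in general. Instead, starting from the nilpotent decomposition $h = \sum_j z_j$ just obtained and using $h = h^*$, I would symmetrize to get $h = \sum_j \tfrac{1}{2}(z_j + z_j^*)$, preserving the count $3 \times 14 \times 256$. A quick inspection of the proof of Lemma~\ref{nilascomm} shows that the element $w$ produced there actually lies in $N_2$ (it is the image of $\bigl(\begin{smallmatrix}t^{1/2} & t^{1/2}\\ -t^{1/2} & -t^{1/2}\end{smallmatrix}\bigr)$, which squares to zero), so each $z_j + z_j^* = [w_j^*, w_j]$ with $w_j \in N_2$. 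Setting $x_j = w_j / \sqrt{2}$, still a square-zero element, we obtain $\tfrac{1}{2}(z_j + z_j^*) = [x_j^*, x_j]$, giving the required decomposition of $h$ into $3 \times 14 \times 256$ commutators of the form $[x^*,x]$ with $x \in N_2$.

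There is no real obstacle here beyond bookkeeping; the only point that requires a moment's care is preserving the constant $3 \times 14 \times 256$ in the selfadjoint case, which is why one routes through the nilpotent decomposition first rather than through Theorem~\ref{commutatornil}(ii) directly.
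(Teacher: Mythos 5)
Your proof is correct and follows the paper's route exactly: Theorem~\ref{reductions} writes $h$ as a sum of three commutators, Theorem~\ref{commutatornil}(i) converts each into $14\times 256$ square-zero elements, and the selfadjoint case is handled by symmetrization. Your symmetrization step --- passing to $h=\sum_j\tfrac12(z_j+z_j^*)$ and observing that the element $w$ constructed in Lemma~\ref{nilascomm} is itself square-zero, so each $\tfrac12(z_j+z_j^*)$ is $[x_j^*,x_j]$ with $x_j=w_j/\sqrt{2}\in N_2$ --- is spelled out more carefully than the paper's terse write-up, which compresses the selfadjoint case into a single sentence and appears to misprint the constant as $14\times 256$ rather than $3\times 14\times 256$.
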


\begin{proof}
The assumptions on $A$ imply that it has strict comparison of full positive elements by traces (see Remark \ref{sameas}).
By   Theorem  \ref{reductions}, each $h\in \overline{[A,A]}$ is  expressible as a  sum  of $3$ commutators.
Each of these commutators, in turn, is expressible as a sum of $14\times 256$ square zero elements.  If $h$ is selfadjoint,
then it also expressible as a sum of  $14\times 256$ terms of  the form $z+z^*$, with $z\in N_2$, and each of these is a commutator $[x^*,x]$
with $x\in N_2$.
\end{proof}

\section{The kernel of the determinant map}\label{thedeterminant}
Let us briefly recall  the definition  of the de la Harpe-Skandalis determinant, as defined in \cite{dlHarpe-Skandalis1}. Let $A$ be a $\Cstar$-algebra.
Let $\mathrm{GL}_{\infty}(A)$ denote the infinite general linear group of $A$ and 
$\mathrm{GL}_{\infty}^0(A)$ the connected component of the  identity.
(If $A$ is non-unital, the general linear groups $\mathrm{GL}_n(A)$ are  defined as the subgroup of $\mathrm{GL}_n(A^\sim)$
of elements of the form $1_n+x$, with $x\in M_n(A)$, and $\mathrm{GL}_\infty(A)$ is the direct limit.)
Let $x\in \mathrm{GL}_{\infty}^0(A)$. Let $\eta\colon [0,1]\to \mathrm{GL}_{\infty}^0(A)$
be a path such that $\eta(0)=1$ and $\eta(1)=x$. Let
\[
\widetilde{\Delta}_\eta(x)=\frac{1}{2\pi i}\Tr\Big(\int_0^1\eta'(t)\eta(t)^{-1}\Big)\in A/\overline{[A,A]}.
\]
If $\zeta$ is another path connecting $1$ to $x$, then $\widetilde \Delta_\eta(x)-\widetilde \Delta_{\zeta}(x)$ belongs to the (additive) subgroup  $\{\Tr(p)-\Tr(q)\mid,p,q\hbox{ projections in }M_\infty(A)\}$, which we denote by $\underline{\Tr}(\mathrm K_0(A))$. The de la Harpe-Skandalis determinant  $\Delta_{\Tr}(x)$ is defined as  the image of $\widetilde \Delta_\eta(x)$
in the quotient of $A/\overline{[A,A]}$ by $\underline{\Tr}(\mathrm K_0(A))$. 
If $x=\prod_{k=1}^n e^{ih_k}$ then $\Delta_{\Tr}(x)$ can be computed to be the image of $\sum_{k=1}^n h_k$
in this quotient.

Let $\mathrm{U}(A)$ denote the unitary group of $A$ and $\mathrm{U}^0(A)$ the connected component of the identity. (If $A$ is non-unital, $\mathrm{U}(A)$ is defined as the  unitaries in $\mathrm U(A^\sim)$ of the form $1+x$, with $x\in A$.)
Given unitaries $u,v\in \mathrm U^0(A)$, let us denote by $(u,v)$  the multiplicative commutator
$uvu^{-1}v^{-1}$. 
Let $\mathrm{DU}^0(A)$ denote the derived or commutator subgroup of $\mathrm U^0(A)$. It is clear that 
$\mathrm{DU}^0(A)$ is contained in the kernel of $\Delta_\Tr$ (since $\Delta_{\Tr}$ is a group homomorphism with abelian codomain).
In this section we prove the following theorem:

\begin{theorem}\label{determinant}
Let $A=M_3(B)$, where $B$ is a unital pure $\Cstar$-algebra whose bounded 2-quasitraces are traces.  Then $\ker \Delta_{\Tr}\cap \mathrm{U}^0(A)= \mathrm{DU}^0(A)$.
\end{theorem}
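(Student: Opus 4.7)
The containment $\mathrm{DU}^0(A)\subseteq \ker\Delta_{\Tr}\cap \mathrm{U}^0(A)$ is immediate because $\Delta_\Tr$ is a group homomorphism into an abelian group. For the reverse inclusion, fix $u\in \mathrm{U}^0(A)\cap \ker\Delta_\Tr$ and write $u=\prod_{k=1}^n e^{ih_k}$ with $h_k\in A_{\sa}$. The formula for $\widetilde\Delta_\eta$ recalled in the text gives $\widetilde\Delta_\eta(u)=\tfrac{1}{2\pi}\Tr(\sum_k h_k)$, so the hypothesis means that, for some projections $p,q\in M_N(A)$, we have $\sum_k h_k-2\pi(p-q)\in\overline{[M_N(A),M_N(A)]}$, using the canonical identification $M_N(A)/\overline{[M_N(A),M_N(A)]}=A/\overline{[A,A]}$.

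The first main step is $K_0$-absorption and commutator decomposition. Since $M_N(A)\cong M_{3N}(B)$ is again a unital pure $\Cstar$-algebra whose bounded $2$-quasitraces are traces, Theorem \ref{reductions} applies in $M_N(A)$. I insert the trivial exponentials $e^{-2\pi i p}=1$ and $e^{2\pi i q}=1$ into the product representing $u\oplus 1_{N-1}$, producing a product of exponentials whose log-exponents sum to an element $H\in\overline{[M_N(A),M_N(A)]}$. Theorem \ref{reductions} then yields an exact decomposition $H=\sum_{j=1}^3[X_j,Y_j]$ with norm control $\|X_j\|\cdot \|Y_j\|\leq C_2\|H\|$.

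The second main step---and the principal obstacle---is to lift this additive commutator decomposition to a multiplicative one for $u\oplus 1_{N-1}$. The plan is to use the Baker--Campbell--Hausdorff identity $(e^{isX},e^{isY})=e^{s^2[X,Y]+O(s^3)}$ in combination with the $M_N$-matrix structure: by subdividing each $[X_j,Y_j]$ into pieces of small norm using matrix manipulations in the spirit of Lemmas \ref{diagonalcommutator} and \ref{zerodiagonal}, and iterating a correction cascade that exploits the no-approximation commutator bounds of Theorem \ref{reductions} to drive the residuals to zero in finitely many steps, one realizes $u\oplus 1_{N-1}$ as an exact product of a bounded number of multiplicative commutators in $\mathrm U^0(M_N(A))$. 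Finally, I would pull back to $A$ using that for pure unital $\Cstar$-algebras the natural map $\mathrm U^0(A)/\mathrm{DU}^0(A)\to \mathrm U^0(M_N(A))/\mathrm{DU}^0(M_N(A))$ is injective (both quotients being canonically identified with the target of $\Delta_\Tr$). The hardest point is achieving \emph{exact} multiplicative commutator equality at the exponential level rather than merely a closure statement; this is precisely where the exactness (no approximations) in Theorem \ref{reductions} is indispensable, since a BCH cascade based only on norm-approximate commutator decompositions would at best give $u\in\overline{\mathrm{DU}^0(A)}$.
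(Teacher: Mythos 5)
Your easy inclusion and your $K_0$-absorption device (inserting $e^{-2\pi i p}=e^{2\pi i q}=1$ to fold $\underline{\Tr}(K_0(A))$ into the sum of log-exponents, then invoking the no-approximation commutator bounds from Theorem~\ref{reductions}) are both correct, and in spirit reproduce what Lemma~\ref{reducetoexp} does in the paper. The breakdown is in the second step, the ``correction cascade.''

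The claim that an iterated BCH/Zassenhaus scheme ``drives the residuals to zero in finitely many steps'' is the gap, and it does not follow from the exactness of the additive commutator decomposition. What such a scheme actually produces (see Proposition~\ref{exphalvenorm}) is a factorization $e^{ih}=\prod_{j=1}^N(u_j,v_j)\,e^{ic}$ with $\|c\|\leq\frac12\|h\|$, $c\sim_{\Tr}0$: the residual shrinks geometrically, but it never vanishes after finitely many iterations, no matter how exact each additive decomposition is at each stage. The exactness of Theorem~\ref{reductions} only removes approximation error \emph{within} each halving step; it does not terminate the cascade. If you just iterate you obtain at best $u\in\overline{\mathrm{DU}^0(A)}$, precisely the pitfall you flagged.

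What the paper uses to close this gap --- and what your proposal is missing --- is a \emph{multiplicative} version of Fack's technique (Lemma~\ref{multiplicativefack}), enabled by the orthogonal geometric sequence $a_1,b_1,a_2,b_2,\dots$ of Lemma~\ref{orthogonalseries}. Each halving step of Proposition~\ref{exphalvenorm} is carried out inside a hereditary subalgebra $\her(a_n+b_n)$, and auxiliary results from \cite{dlHarpe-Skandalis2} are used to transport the shrinking residual into the \emph{next} orthogonal corner $\her(a_{n+1}+b_{n+1})$. Because the successive blocks of multiplicative commutators live in pairwise orthogonal corners, the infinite product can be regrouped (after an interleaving conjugation) into a \emph{finite} product of commutators, each of which is a norm-convergent infinite product of unitaries supported on orthogonal summands. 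This orthogonality bookkeeping --- not the exactness of the additive bound --- is the mechanism that turns the infinite cascade into an exact finite commutator factorization, and also accounts for the specific $M_3(B)$ hypothesis (you need room to run Proposition~\ref{exphalvenorm} in $M_2$ corners and to build the orthogonal sequence inside the remaining corner). Your proposal as written has no analogue of this step.

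A secondary issue: passing to $M_N(A)$ and pulling back via injectivity of $\mathrm U^0(A)/\mathrm{DU}^0(A)\to \mathrm U^0(M_N(A))/\mathrm{DU}^0(M_N(A))$ is not established in the paper and is not obviously true in this generality; the paper avoids it by working in $A=M_3(B)$ directly and handling the $K_0$ ambiguity through the lemma preceding Lemma~\ref{reducetoexp}, which produces an element $h\sim_{\Tr}p$ in $A$ itself with $e^{ih}$ a single commutator.
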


The proof is preceded by a number of lemmas.

\begin{lemma}
 Let $A$ be a  pure unital $\Cstar$-algebra and $p\in M_m(A)$ a projection. 
Then there exists $h\in A_{\sa}$ such that $h\sim_{\Tr}p$ and $e^{ih}=(u,v)$
for some unitaries $u,v\in\mathrm {U}^0(A)$.
\end{lemma}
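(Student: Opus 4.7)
The plan is to exploit the matricial structure provided by Proposition~\ref{ddembeds} (unital embeddings of dimension-drop algebras $\mathcal Z_{n-1,n}\hookrightarrow A$) together with almost divisibility of $\Cu(A)$ to build the required witnesses explicitly. First I would use almost divisibility to produce, for some sufficiently large $n$ (chosen so that $[p]$ is appropriately comparable to $n$-fold multiples of smaller positive elements), pairwise orthogonal positive elements $e_0,\dots,e_{n-1}\in A_+$ that are mutually Murray--von Neumann equivalent via partial isometries $s_i$ with $s_i^*s_i=e_i$ and $s_is_i^*=e_{i+1}$ (indices mod $n$), and such that the slack $1-\sum_i e_i$ is Cuntz-dominated by $e_0$. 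Absorbing this slack by a continuous path to the identity, the $s_i$'s then assemble into a unitary $v\in\mathrm U^0(A)$ that cyclically permutes the $e_i$'s.

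Second, using Lemma~\ref{strongcomp}, I would select a positive element $q\in\overline{e_0Ae_0}$ calibrated so that a Weyl-type unitary
\[
u=\exp\Bigl(i\sum_{k=0}^{n-1}\tfrac{2\pi k}{n}\,v^k q v^{-k}\Bigr)\in\mathrm U^0(A)
\]
satisfies the approximate Weyl relation $uv\approx e^{2\pi i/n}\cdot vu$ modulo the negligible slack. Since the elements $v^k q v^{-k}$ have pairwise orthogonal supports (inherited from the orthogonality of the $e_k$'s), the commutator $(u,v)$ can be computed explicitly to take the form $e^{ih}$ for an $h\in A_{\sa}$ whose trace, accumulated around the cycle under the Weyl-phase twist, equals $2\pi\Tr(q)$.

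Third, the element $q$ must be chosen so that $2\pi\Tr(q)=\Tr(p)$ in $A/\overline{[A,A]}$. This is possible because $\Tr(p)$ lies in $\underline{\Tr}(\mathrm K_0(A))$ and the Cuntz semigroup of a pure $\Cstar$-algebra is rich enough---via almost divisibility and strict comparison---to realize the required fractional $K_0$-trace value inside the hereditary subalgebra $\overline{e_0Ae_0}$. The main obstacle I anticipate is precisely this trace-matching step: a naive cyclic-permutation commutator $(e^{ih'},v)$ has trace zero (since $\Tr(h'-vh'v^{-1})=0$), so the Weyl phase $e^{2\pi i/n}$ and the specific choice of $q$ are essential to accumulate the prescribed nonzero trace $\Tr(p)$ around the cycle. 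Handling the slack so that $u,v$ lie in $\mathrm U^0(A)$ rather than merely in the unitary group, and ensuring that the approximate Weyl relation gives an exact identity $(u,v)=e^{ih}$ rather than an approximate one, will likely require one further use of strict comparison and an explicit spectral perturbation.
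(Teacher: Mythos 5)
Your sketch is aiming at the right underlying construction (a Weyl-type commutator $uvu^{-1}v^{-1}=e^{ih}$ with the phase twist accumulating the prescribed trace around a cycle), and you correctly identify the two genuine obstructions---handling the slack $1-\sum_i e_i$ and promoting the approximate Weyl relation to an exact identity $(u,v)=e^{ih}$. But the paper does not attempt to build this in $A$ directly; it imports it, and that changes what actually needs to be proved.

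Concretely, the paper's proof works in the corner $B=pM_m(A)p$, which is again pure and unital with unit $p$. It invokes Proposition \ref{ddembeds} to get a unital embedding $\phi\colon \mathcal Z_{n-1,n}\to B$, and then cites \cite[Lemma 5.4]{kerdet}, which constructs \emph{inside} $\mathcal Z_{n-1,n}$ (a concrete subalgebra of $M_{(n-1)n}(C[0,1])$, where one can write explicit matrix-valued functions and compute exactly) a selfadjoint $h\in\her(e)$ with $h\sim_{\Tr}1$ and $e^{ih}=(u,v)$ for $u,v\in\mathrm U^0(\her(e))$. Pushing forward by $\phi$ and noting $\phi(1)=p$, the trace calibration is automatic: $h'\sim_{\Tr}p$ with no $2\pi$-bookkeeping. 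Finally, choosing $n>2m$ gives $(2m+1)[e']\leq[p]\leq m[1]$, so Lemma \ref{strongcomp} yields $e'\precsim 1$ in $A$, and conjugating by the corresponding partial isometry moves everything from $\her(e')\subseteq M_m(A)$ into $A$. The issues you flag (absence of projections in $A$, exactness of the Weyl relation) are precisely why the hard explicit computation is done once and for all in the concrete model $\mathcal Z_{n-1,n}$ rather than abstractly in $A$.

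So your proposal is not wrong in spirit, but as written it has two real gaps. First, your ``Murray--von Neumann equivalent positive elements via partial isometries $s_i$ with $s_i^*s_i=e_i$'' is not a coherent notion unless the $e_i$ are projections, and a general pure $\Cstar$-algebra may have no nonzero projections; the inner automorphisms $\mathrm{Ad}_v$ cyclically permuting $\her(e_i)$'s have to be produced by some other mechanism (in the paper, the concrete model). Second, the calibration $2\pi\Tr(q)=\Tr(p)$ needs a positive $q\in\her(e_0)\subseteq A$ whose universal trace matches $\frac{1}{2\pi}\Tr(p)$ with $p\in M_m(A)$; achieving this \emph{exactly} (not approximately) in $A/\overline{[A,A]}$ is nontrivial and is precisely what the paper sidesteps by working in $pM_m(A)p$ with unit $p$. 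If you want to carry out your route, you would essentially have to reprove \cite[Lemma 5.4]{kerdet}; a cleaner fix is to adopt the paper's reduction to $B=pM_m(A)p$ and then cite that lemma.
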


\begin{proof}
Let $B=pM_m(A)p$. Choose $n\in \N$. Since $B$ is pure and unital, there exists 
a unital homomorphism $\phi\colon \mathcal Z_{n-1,n}\to B$, by Proposition \ref{ddembeds}. Let $e\in \mathcal Z_{n-1,n}$ be a positive element
such that $\mathrm{rank}(e(t))=n$ for all $t\in (0,1]$ and $\mathrm{rank}(e(0))=n-1$ (so that
$(n-1)[e]\leq [1]\leq n[e]$ in the Cuntz semigroup of $\mathcal Z_{n-1,n}$). In the proof of Lemma 5.4 of 
\cite{kerdet} a selfadjoint  element $h\in \her(e)$ is constructed such that $h\sim_{\Tr}1$ (in $\mathcal Z_{n-1,n}$) and 
$e^{ih}=(u,v)$ for some unitaries  $u,v\in \mathrm U^0(\her(e))$. Moving these elements with the homomorphism
$\phi$, we get $e'\in B_+$, $h'\in \her(e')_{\sa}$, and $u',v'\in U^0(\her(e'))$, 
such that $(n-1)[e']\leq [p]\leq n[e']$, $h'\sim_{\Tr}p$, and $e^{i h'}=(u',v')$.
Now choose $n>2m$. Then $(2m+1)[e']\leq [p]\leq m[1]$, where $1$ is the unit of $A$. By Lemma \ref{strongcomp},
this implies that $e'\precsim 1$; i.e., there exists $x\in M_m(A)$ such that $x^*x=e'$ and $xx^*\in A$.
Let $x=w|x|$ be the polar decomposition of $x$ (in $M_n(A)^{**}$). Then the selfadjoint $h''=wh'w^*$, and the unitaries
$u''=wuw^*$ and $v''=wvw^*$ have the desired properties.
\end{proof}

\begin{lemma}\label{reducetoexp}
Let $A$ be a pure unital  $\Cstar$-algebra. Let $u\in \mathrm U^0(A)$ be such that $\Delta_{\Tr}(u)=0$. Then 
$u=\prod_{j=1}^M (u_j,v_j)\cdot e^{ih}$ for some $u_1,v_1,\dots, u_M,v_M\in \mathrm U^0(A)$ and  some 
$h\in A_{\sa}$ such that $h\sim_{\Tr}0$.
\end{lemma}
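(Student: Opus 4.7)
My plan is to first write $u$ as a finite product of exponentials, extract the $K_0$-contribution via the preceding lemma, and then combine the resulting exponentials modulo multiplicative commutators.

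Since $u\in\mathrm{U}^0(A)$, I can write $u=\prod_{k=1}^n e^{ih_k}$ with $h_k\in A_{\sa}$. The concatenation of the straight-line paths $t\mapsto e^{ith_k}$, together with the formula for $\widetilde{\Delta}$ recalled at the start of the section, computes $\Delta_{\Tr}(u)$ as the class of $\sum_k h_k$ in $A/\overline{[A,A]}$ modulo $\underline{\Tr}(\mathrm{K}_0(A))$. The hypothesis $\Delta_{\Tr}(u)=0$ thus yields projections $p,q\in M_m(A)$ (for some $m$) such that
\[
\Tr\Bigl(\sum_{k=1}^n h_k\Bigr)=\Tr(p)-\Tr(q)\quad\text{in }A/\overline{[A,A]}.
\]

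Now I apply the preceding lemma to $p$ and to $q$ separately, obtaining $\alpha,\beta\in A_{\sa}$ and unitaries $s_1,t_1,s_2,t_2\in\mathrm{U}^0(A)$ such that $\alpha\sim_{\Tr}p$, $\beta\sim_{\Tr}q$, $e^{i\alpha}=(s_1,t_1)$, and $e^{i\beta}=(s_2,t_2)$. Setting $h:=\sum_k h_k-\alpha+\beta$, one has $\Tr(h)=\Tr(\sum_k h_k)-\Tr(p)+\Tr(q)=0$, i.e.\ $h\sim_{\Tr}0$, which is the element the lemma requires.

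It now suffices to show that $u\cdot e^{-ih}$ is a product of multiplicative commutators in $\mathrm{U}^0(A)$; the final statement then follows by moving $e^{ih}$ to the right past these commutators, which only introduces further multiplicative commutators as corrections. Substituting $h$ and inserting the factorizations $e^{i\alpha}=(s_1,t_1)$ and $e^{-i\beta}=(s_2,t_2)^{-1}$, the task reduces to the following combining-exponentials identity: for any self-adjoint $g_1,\dots,g_\ell\in A$, the element $\prod_{j=1}^\ell e^{ig_j}\cdot e^{-i\sum_j g_j}$ is a product of multiplicative commutators in $\mathrm{U}^0(A)$. By induction on $\ell$, this reduces further to the two-factor case, namely that $e^{ia}e^{ib}e^{-i(a+b)}\in\mathrm{DU}^0(A)$ for all $a,b\in A_{\sa}$.

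The hard part will be proving this two-factor identity inside $A$ itself. In $M_2(A)$ it is a standard rotation-swap computation: with $w=\exp\bigl(\tfrac{\pi}{2}\bigl(\begin{smallmatrix}0&1\\-1&0\end{smallmatrix}\bigr)\bigr)\in\mathrm{U}^0(M_2)$, one has the key identity $(w,\mathrm{diag}(e^{ib},1))=\mathrm{diag}(e^{-ib},e^{ib})$, which combined with two further commutator moves (to exchange the order of $\mathrm{diag}(e^{ia},1)$ and a commutator factor, and to conjugate by $\mathrm{diag}(e^{i(a+b)},1)$) expresses $\mathrm{diag}(e^{ia}e^{ib}e^{-i(a+b)},1)$ as a product of at most three multiplicative commutators in $\mathrm{U}^0(M_2(A))$. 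To transport this identity down from $M_2(A)$ to $A$, I will exploit the purity and unitality of $A$: by Proposition \ref{ddembeds}, $\mathcal Z_{n-1,n}$ embeds unitally in $A$ for every $n$, which supplies enough internal matricial structure (pairs of mutually orthogonal full positive elements whose hereditary subalgebras admit a connecting partial isometry) to realize the rotation swap within $A$ itself. Once this two-factor identity is established, the lemma follows by straightforward bookkeeping.
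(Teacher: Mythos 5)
Your opening step agrees with the paper: express $u=\prod_{k=1}^n e^{ih_k}$, observe $\sum_k h_k\sim_{\Tr}p-q$, and use the preceding lemma to absorb the $K_0$-contribution into two multiplicative commutators. From that point on you diverge, and the divergence opens a real gap.

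You fix $h:=\sum_k h_k-\alpha+\beta$ in advance and then must show $u\cdot e^{-ih}\in\mathrm{DU}^0(A)$, which you reduce to the \emph{exact} combining identity $\prod_j e^{ig_j}\cdot e^{-i\sum_j g_j}\in\mathrm{DU}^0(A)$, and ultimately to $e^{ia}e^{ib}e^{-i(a+b)}\in\mathrm{DU}^0(A)$ for all $a,b\in A_{\sa}$. This last statement is not established by your sketch, and it cannot be established by the rotation-swap moves you invoke. The rotation swap $(w,\mathrm{diag}(e^{ib},1))=\mathrm{diag}(e^{-ib},e^{ib})$ together with the Whitehead identity $\mathrm{diag}(zz',1)=\mathrm{diag}(z,z')\cdot\mathrm{diag}(z',z'^{-1})$ only produces commutator expressions for elements of the shape $\mathrm{diag}(v,v')$ with a nontrivial second entry; to pass from $\mathrm{diag}(v,v')$ to $\mathrm{diag}(v,1)$ one must express $\mathrm{diag}(1,v'^{-1})$ as a product of commutators, which is the same problem recast. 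In fact, $\mathrm{diag}(v,1)\in\mathrm{DU}^0(M_2(A))$ for $v=e^{ia}e^{ib}e^{-i(a+b)}$ is equivalent to $v\in\ker\Delta_{\Tr}$ implying $v\in\mathrm{DU}^0$, which is precisely the content of Theorem \ref{determinant} that this lemma is meant to help prove; using it here would be circular, and the paper's Theorem \ref{determinant} is anyway stated only for $M_3(B)$ under extra hypotheses, not for an arbitrary pure unital $A$. Your proposal to ``transport'' the alleged $M_2$ identity down to $A$ via purity is also left unproved.

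The paper sidesteps the exact combining identity precisely by \emph{not} pre-specifying $h$. It rescales, writing $\prod_j e^{ih_j}=\prod_j(e^{ih_j/N})^N=\bigl(\prod_{j=1}^M(u_j,v_j)\bigr)\cdot\bigl(\prod_j e^{ih_j/N}\bigr)^N$, where the commutators come from rearranging the factors; for $N$ large the inner product is close to $1$ so $\prod_j e^{ih_j/N}=e^{ih}$ for some $h\in A_{\sa}$, and then \cite[Lemma~3(b)]{dlHarpe-Skandalis1} gives $\Tr(h)=\sum_j\Tr(h_j)/N\sim_{\Tr}0$. No claim is made that the product of exponentials differs from $e^{i\sum h_j}$ by a product of commutators. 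This freedom in the choice of $h$ is essential; if you want to repair your argument, you should drop the explicit formula $h=\sum h_k-\alpha+\beta$ and replace the combining-exponentials step with the rescaling/BCH argument.
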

\begin{proof}
Since $u\in \mathrm U^0(A)$ we have   $u=\prod_{j=1}^n e^{ih_j}$, where $h_1,\dots,h_n\in A_{\sa}$, and since $\Delta_{\Tr}(u)=0$ we also have 
$\sum_{j=1}^n h_j\sim_{\Tr}p-q$ for some projections $p,q\in M_m(A)$. 
Applying the previous lemma, we can write 
\[
u=(u',v')(u'',v'')\prod_{j=1}^{n+2}e^{ih_j},
\] 
where now $\sum_{j=1}^{n+2} h_j\sim_{\Tr}0$.
It thus suffices to prove the lemma for the unitary  $\prod_{j=1}^{n+2}e^{ih_j}$.
Let $N \in\N$. Then 
  \begin{align*}
 \prod_{j=1}^{n+2}e^{ih_j}&=\prod_{j=1}^{n+2} (e^{ih_j/N})^N\\
  &=\prod_{j=1}^M (u_j,v_j) \cdot \Big(\prod_{j=1}^{n+2} e^{ih_j/N}\Big)^N.
  \end{align*}
  Here the commutators $(u_j,v_j)$ result simply from rearranging the factors of 
  the first product. In particular, $u_j,v_j\in \mathrm U^0(A)$ for all $j$. We can choose $N$ large enough so that   
  $\prod_{j=1}^{n+2} e^{ih_j/N}=e^{ih}$, for some $h\in A_{\sa}$. By \cite[Lemma 3(b)]{dlHarpe-Skandalis1}, 
  the trace of the logarithm of a product of $n+2$ unitaries belonging to a sufficiently small neighborhood of the identity is equal to the sum of the trace of the logarithm of each of the unitaries. 
Thus,  
  for $N$ large enough, we have with $h\sim_{\Tr}\sum_{j=1}^{n+2} h_j/N\sim_{\Tr}0$. Therefore,
  \[
  u=\prod_{j=1}^M (u_j,v_j)\cdot e^{iNh},
  \]
  with $h\in A_{\sa}$ such that $h\sim_{\Tr}0$. This proves the lemma
\end{proof}

\begin{lemma}
Let $m\in \N$, $R>0$, and $\epsilon>0$. Then there exists $M\in \N$  with the following property: If $A$ is a $\Cstar$-algebra and  $a_1,\dots,a_m\in A_{\sa}$ are such that $\|a_i\|\leq R$ for all $i$, then
there exist $x_1,y_1,\dots,x_{M},y_M\in A_{\sa}$ and  $c\in A_\sa$ such that 
\begin{align*}
e^{i(a_1+\dots+a_m)} &=\prod_{k=1}^M (e^{ix_k},e^{iy_k})\cdot  e^{ia_1}\cdots e^{ia_m}\cdot e^{ic},\\
\|x_k\|,\|y_k\| &\leq \epsilon \cdot \sum_{j=1}^m\|a_j\|, \hbox{ for }k=1,\dots,M,\\
\|c\| &\leq \epsilon \cdot \sum_{j=1}^m\|a_j\|,\hbox{ and }c \sim_{\Tr}0.
\end{align*}
\end{lemma}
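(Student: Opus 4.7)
The plan is to use a scaled Baker--Campbell--Hausdorff (BCH) argument together with an explicit rearrangement of an $N$-fold product. Write $s = \sum_j a_j$, $T = \sum_j \|a_j\|$, and $u_{1/N} := e^{ia_1/N}\cdots e^{ia_m/N}$. For $N$ sufficiently large (depending only on $m$ and $R$), the iterated BCH formula gives $\log(u_{1/N}) = is/N + iR_N$, where $R_N \in A_{\sa}$ is a convergent sum of iterated commutators of the $a_j/N$'s; hence $R_N \sim_{\Tr} 0$ and $\|R_N\| \leq C_1 T^2/N^2$. A second application of BCH then produces $c_N \in A_{\sa}$ satisfying $c_N \sim_{\Tr} 0$, $\|c_N\| \leq C_2 T^2/N^2$, and
\[
e^{is/N} = u_{1/N}\cdot e^{ic_N}.
\]
Raising to the $N$-th power gives $e^{is} = (u_{1/N}\cdot e^{ic_N})^N$.

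The next step is to rewrite this $N$-fold product using two kinds of moves. (a) \emph{Conjugation moves}, $u\cdot e^{i\gamma} = e^{iu\gamma u^{-1}}\cdot u$: these permute a small-norm exponential past a unitary without introducing any multiplicative commutator, at the cost of replacing $\gamma$ by a unitary conjugate of the same norm. (b) \emph{Swap moves}, $e^{iX}e^{iY} = (e^{iX}, e^{iY})\cdot e^{iY}e^{iX}$: these interchange two adjacent exponentials, at the cost of exactly one multiplicative commutator of the desired form. First I use moves of type (a) to push all $N$ copies of $e^{ic_N}$ in $(u_{1/N}e^{ic_N})^N$ to the right, obtaining $(u_{1/N})^N\cdot \prod_{k=1}^N e^{i\tilde c_N^{(k)}}$ with each $\tilde c_N^{(k)}$ a unitary conjugate of $c_N$. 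Next I bubble-sort the $mN$ factors of $(u_{1/N})^N = (e^{ia_1/N}\cdots e^{ia_m/N})^N$ into $m$ blocks, using at most $\binom{m}{2}\binom{N}{2}$ swap moves to reach $e^{ia_1}e^{ia_2}\cdots e^{ia_m}$; each such swap introduces a commutator $(e^{i\tilde a_i/N}, e^{i\tilde a_j/N})$, where $\tilde a_i$ is a selfadjoint unitary conjugate of $a_i$, so that $\|\tilde a_i/N\| \leq T/N$. These commutators are then pushed to the leftmost position by further conjugation moves, using the identity $g(u,v)g^{-1} = (gug^{-1}, gvg^{-1})$ to see that a unitary conjugate of a commutator of exponentials is again a commutator of exponentials of conjugates (hence with the same norm bounds).

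Finally, the rightmost product $\prod_{k=1}^N e^{i\tilde c_N^{(k)}}$ is collapsed to a single exponential $e^{ic}$ by a further BCH application: since each $\|\tilde c_N^{(k)}\| \leq C_2 T^2/N^2$ and there are $N$ of them, the cumulative log converges with norm at most $C_3 T^2/N$, and every term (the sum of unitary conjugates of $c_N$, together with all BCH commutator corrections) lies in $\overline{[A, A]}$, so $c \sim_{\Tr} 0$. To conclude, I choose $N$ large enough that $T/N \leq \epsilon T$ (automatic once $N \geq 1/\epsilon$) and $C_3 T^2/N \leq \epsilon T$ (which follows from $N \geq C_4 mR/\epsilon$ since $T \leq mR$), as well as large enough for the BCH convergence steps above; such an $N$ depends only on $m, R, \epsilon$, and then $M \leq \binom{m}{2}\binom{N}{2}$ does as well. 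The main technical obstacle will be the careful BCH bookkeeping in three separate places (the factorization $e^{is/N} = u_{1/N}e^{ic_N}$; the collapse of the $N$-fold product $\prod e^{i\tilde c_N^{(k)}}$; and in each case the verification that the relevant elements admit the stated norm bounds while remaining in $\overline{[A, A]}$).
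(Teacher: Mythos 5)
Your proposal is correct and follows essentially the same strategy as the paper: factor $e^{is/N}$ as the ordered product $e^{ia_1/N}\cdots e^{ia_m/N}$ times a small trace-zero correction, raise to the $N$-th power, and rearrange by pushing factors around at the cost of multiplicative commutators whose logarithms have norm $O(1/N)$. The only cosmetic differences are that the paper invokes Suzuki's Zassenhaus formula and de la Harpe--Skandalis Lemma 3(b) where you use two applications of BCH and a direct iterated-commutator argument, and that the paper moves the $e^{ic_N}$ factors by swap moves (so they accumulate as $e^{iNc_N}$ with no extra collapse needed) whereas you use conjugation moves and then collapse the tail $\prod_k e^{i\tilde c_N^{(k)}}$ with one more BCH step.
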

\begin{proof}
By \cite[Theorem 2]{suzuki}, for $\lambda\in \R$ such that $|\lambda|<\frac{1}{mR}\cdot (\ln 2-\frac 1 2)$ we have
\[
e^{i\lambda a_1+\cdots+i\lambda a_m}=e^{i\lambda a_1}e^{i\lambda a_2}\cdots 
e^{i\lambda a_m}\cdot e^{ic(\lambda)},
\]
where $\|c(\lambda)\|\leq L\lambda^2\max_j \|a_j\|^2$ and the constant $L>0$ is dependent on $m$ and $R$ only. 
Furthermore, by \cite[Lemma 3(b)]{dlHarpe-Skandalis1}, for $|\lambda|$ small enough (depending only on $m$ and $R$), the trace of the logarithm of the right side is equal to $\sum_{j=1}^mi\lambda \Tr(a_j)+\Tr(c(\lambda))$; whence $c(\lambda)\sim_{\Tr}0$. 
 Now let us choose $\lambda=\frac 1 N$, with $N\in \N$. Then for $N$ large enough (depending only on $m$   and $R$) we have that
\begin{align}\label{zassenhaus}
e^{ia_1/N+\cdots +ia_m/N}=e^{ia_1/N}e^{ia_2/N}\cdots e^{ia_m/N}\cdot e^{c_N},
\end{align}
where  $c_N\in A_\sa$ satisfies that $c_N\sim_{\Tr}0$ and  
\[
\|c_N\|\leq \frac{L}{N^2}\cdot \max_i \|a_i\|^2\leq \frac{LR}{N^2}\cdot \max_i \|a_i\|.
\]  
Raising to the $N$ on both sides of \eqref{zassenhaus} we get 
\begin{align*}
e^{ia_1+\dots+ia_m} &=(e^{a_1/N}e^{a_2/N}\cdots e^{a_m/N}e^{c_N})^N\\
& =\prod_{k=1}^M(e^{ix_k},e^{iy_k})\cdot e^{a_1}e^{a_2}\cdots e^{a_m}e^{Nc_N}.
\end{align*}
The commutators $(e^{ix_k},e^{iy_k})$ in the last expression result from rearranging the terms in  $(e^{a_1/N}e^{a_2/N}\cdots e^{a_m/N}\cdot e^{c})^N$. Notice that $M$ depends only  on $N$ and $m$. Choosing
$N>\frac 1 \epsilon$ we arrange for $\|x_k\|,\|y_k\|\leq \epsilon \max_j \|a_j\|$. Choosing  $N>\frac{LR}{\epsilon}$ 
we also get that  $\|Nc_N\|\leq \frac{LR}{N}\cdot \max_i \|a_i\|\leq \epsilon \max_j \|a_j\|$. 
\end{proof}

\begin{proposition}\label{exphalvenorm}
There exists $N\in \N$ such that the following holds:
If  $A=M_2(B)$, where $B$ is a  pure $\Cstar$-algebra with compact $\mathrm{Prim}(B)$ and whose bounded 2-quasitraces are traces,
and  $h\in A_\sa$ is such that $h\sim_\Tr 0$ and  $\|h\|\leq 1$,  then 
\[
e^{ih}=\prod_{j=1}^N (u_j,v_j)\cdot e^{ic}
\] 
for some $c\in A_\sa$ and some $u_1,v_1,\dots,u_N,v_N\in \mathrm U^0(A)$ such that 
\begin{gather*}
c\sim_{\Tr}0, \, \|c\|\leq \frac 1 2\|h\|, \hbox{ and}\\ 
\|u_j-1\|,\|v_j-1\|\leq \|e^{ih}-1\|^{\frac 1 2}, \hbox{ for all $j=1,\dots,N$.}
\end{gather*}
\end{proposition}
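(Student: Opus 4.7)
The strategy is to combine Corollary \ref{halfnormbounded} (which halves the norm of the ``non-commutator'' part of $h$) with the Baker--Campbell--Hausdorff formula to convert additive commutators into multiplicative ones, using the Suzuki-type lemma immediately preceding this proposition to handle the error bookkeeping. The role of $M_2(B)$ is to ensure $A$ inherits the hypotheses of Corollary \ref{halfnormbounded}, namely compact primitive spectrum and strict comparison of full positive elements by traces, both of which transfer from $B$ since $\mathrm{Prim}(M_2(B))\cong \mathrm{Prim}(B)$ and $M_2(B)\otimes \mathcal K\cong B\otimes \mathcal K$. I will first treat $\|h\|\leq \eta$ for a small universal $\eta>0$, and then bootstrap to all $\|h\|\leq 1$ by rescaling.

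In the small-norm case, apply Corollary \ref{halfnormbounded} to $h$ and iterate once on the remainder to obtain $h=\sum_{i=1}^{N_0}[a_i,b_i]+c_0$ with $\|c_0\|\leq \tfrac14\|h\|$, $c_0\sim_{\Tr}0$, and $\|a_i\|\|b_i\|\leq \|h\|$. Taking selfadjoint parts (since $h^*=h$) and splitting $a_i=\alpha_i+i\beta_i$, $b_i=\gamma_i+i\delta_i$ into selfadjoint pieces, a direct calculation gives $\mathrm{Re}([a_i,b_i])=i[\alpha_i,\delta_i]+i[\beta_i,\gamma_i]$, yielding $h=\sum_k i[\mu_k,\nu_k]+c_0'$ with $\mu_k,\nu_k\in A_{\sa}$, $\|\mu_k\|\|\nu_k\|\leq \|h\|$, and $c_0'\in A_{\sa}$ with $\|c_0'\|\leq \tfrac14\|h\|$, $c_0'\sim_{\Tr}0$. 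Rescaling each pair $(\mu_k,\nu_k)\mapsto (t_k\mu_k,\nu_k/t_k)$ to arrange $\|\mu_k\|=\|\nu_k\|\leq \sqrt{\|h\|}$, apply the Suzuki-type lemma with $a_k=i[\mu_k,\nu_k]$ for $k\leq 2N_0$ and $a_{2N_0+1}=c_0'$ to write
\[
e^{ih}=\prod_\ell (e^{ix_\ell},e^{iy_\ell})\cdot \prod_k e^{-[\mu_k,\nu_k]}\cdot e^{ic_0'}\cdot e^{ic_1},
\]
with $\|x_\ell\|,\|y_\ell\|,\|c_1\|\leq \epsilon\cdot O(\|h\|)$. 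The BCH formula gives $(e^{i\mu_k},e^{i\nu_k})=e^{-[\mu_k,\nu_k]}\cdot e^{iR_k}$ with $\|R_k\|=O(\|h\|^{3/2})$ and $R_k\sim_{\Tr}0$ (as a convergent sum of iterated commutators). Substitute $e^{-[\mu_k,\nu_k]}=(e^{i\mu_k},e^{i\nu_k})\cdot e^{-iR_k'}$, push all small-norm exponentials $e^{-iR_k'},e^{ic_1}$ past the multiplicative commutators via $W^{-1}e^{iz}W=e^{iW^{-1}zW}$ (which preserves both norm and $\sim_{\Tr}$-class), and combine the resulting trailing chain with $e^{ic_0'}$ into a single $e^{ic}$ via one more application of the Suzuki-type lemma. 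For $\eta$ small enough, the BCH and Suzuki corrections are each $\leq \tfrac18\|h\|$, so $\|c\|\leq \tfrac12\|h\|$ and $c\sim_{\Tr}0$.

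For $\eta<\|h\|\leq 1$, set $K=\lceil \|h\|/\eta\rceil$ so that $\|h/K\|\leq \eta$, and strengthen the small-case bound to $\|c_1\|\leq \alpha\|h/K\|$ with some $\alpha<\tfrac12$ (by an extra iteration of the Corollary inside the previous paragraph) to write $e^{ih/K}=A\cdot e^{ic_1}$ with $A$ a product of multiplicative commutators. The identity $(AB)^K=A^K\cdot(A^{-(K-1)}BA^{K-1})\cdots(A^{-1}BA)\cdot B$, proved by induction, gives $e^{ih}=(e^{ih/K})^K=A^K\cdot \prod_{k=0}^{K-1} e^{ic_1^{(k)}}$, where $c_1^{(k)}:=A^{-k}c_1A^k$ has the same norm as $c_1$ and is $\sim_{\Tr}0$. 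Combining the $\prod_k e^{ic_1^{(k)}}$ into $e^{ic}$ via the Suzuki lemma gives $\|c\|\leq K\alpha\|h/K\|+\text{error}=\alpha\|h\|+\text{error}\leq \tfrac12\|h\|$ for the Suzuki error sufficiently small; the total commutator count is $KN+$(Suzuki corrections), hence universal since $K\leq \lceil 1/\eta\rceil$.

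The norm bounds $\|u_j-1\|,\|v_j-1\|\leq \|e^{ih}-1\|^{1/2}$ follow from the trigonometric inequality $2\sin(\sqrt t/2)\leq \sqrt{2\sin(t/2)}$ for $t\in[0,1]$ (verified by squaring and comparing Taylor expansions, using $\sin^2(\sqrt t/2)-\tfrac12\sin(t/2)=-t^2/48+O(t^3)\leq 0$), which gives $\|e^{i\mu_k}-1\|=2\sin(\|\mu_k\|/2)\leq \|e^{ih}-1\|^{1/2}$ whenever $\|\mu_k\|\leq \sqrt{\|h\|}$; the Suzuki-commutator unitaries satisfy the bound trivially for $\epsilon$ small. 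The main obstacle is the delicate budgeting of three independent error sources in the small-norm case (the Corollary residual $c_0'$, the BCH remainders $R_k'$, and the two Suzuki approximation errors), all of which must collectively be bounded by $\tfrac14\|h\|$ so that $\|c\|\leq \tfrac12\|h\|$ after combination with $c_0'$; once arranged for $\|h\|\leq \eta$, the scaling trick of the previous paragraph handles $\eta<\|h\|\leq 1$ uniformly.
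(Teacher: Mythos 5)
Your proposal takes a genuinely different route from the paper's. The paper's proof is short and essentially exact: it invokes Theorem~\ref{reductions} to write $h=\sum_{k=1}^7[x_k,y_k]$ \emph{exactly} (no approximation error), then applies Lemma~\ref{matricesnilpotents}(ii) --- which is precisely why the hypothesis $A=M_2(B)$ appears --- to replace each commutator by at most 14 square-zero nilpotents, which, because $h$ is selfadjoint, may be taken of the form $[z_k^*,z_k]$ with $z_k^2=0$ and $\|z_k\|^2\leq\|h\|$. A single application of the Suzuki-type lemma (with $\epsilon=\tfrac12$) then yields $e^{ih}=\prod_\ell(e^{ix_\ell},e^{iy_\ell})\cdot\prod_k e^{i[z_k^*,z_k]}\cdot e^{ic}$, and the proof closes by citing \cite[Lemma~2.4(ii)]{kerdet}: for square-zero $z$ with $\|z\|^2\leq\pi/2$ one has \emph{exactly} $e^{i[z^*,z]}=(u,v)$ with $\|u-1\|,\|v-1\|\leq\|e^{i[z^*,z]}-1\|^{1/2}$. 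No BCH remainders, no iteration, no bootstrapping.

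Your argument instead starts from Corollary~\ref{halfnormbounded} (an approximation, requiring iteration), passes to selfadjoint commutators $i[\mu_k,\nu_k]$ and then to multiplicative commutators $(e^{i\mu_k},e^{i\nu_k})$ via Baker--Campbell--Hausdorff, and has to budget three independent error sources (residual from the corollary, BCH remainders $R_k$, Suzuki corrections) plus a rescaling trick to pass from $\|h\|\leq\eta$ to $\|h\|\leq 1$. The outline is plausible, and the individual ingredients --- the selfadjoint decomposition $\mathrm{Re}[a,b]=i[\alpha,\delta]+i[\beta,\gamma]$, the trace-zero property of the BCH remainder (best argued by the de la Harpe--Skandalis trace-log identity, as the paper does elsewhere, rather than by summing iterated commutators), the trigonometric inequality $2\sin(\sqrt t/2)\leq\sqrt{2\sin(t/2)}$, and the conjugation identity for the bootstrap --- all check out. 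What you pay for is a considerable amount of deferred bookkeeping (``push past the commutators,'' ``combine the trailing chain,'' ``one more application'') that the paper's exact algebraic decomposition avoids entirely. You also do not use the $M_2(B)$ structure in any essential way, only to inherit the comparison hypotheses; this should have been a signal that you were missing the intended mechanism, since the $M_2$ is hypothesized precisely to make Lemma~\ref{matricesnilpotents}(ii) available. In short: not wrong, but you rediscover by analytic perturbation what the paper obtains in one step from the nilpotent calculus, and the proposition's hypotheses point directly at the latter.
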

\begin{proof}
Let $h\in A_\sa$ be such that $h\sim_\Tr 0$ and $\|h\|\leq 1$.
By Theorem \ref{reductions}, $h$ is a sum of $7$ commutators, and by Lemma \ref{matricesnilpotents} (ii), each of these commutators is a sum of at most 14 nilpotents of order 2. Furthermore, since $h$ is selfadjoint we can assume that 
these nilpotent elements  have the form $[z^*,z]$, with $z^2=0$. We thus have
$
h=\sum_{k=1}^{m} [z_k^*,z_k]$,
for some $z_1,\dots,z_{m}\in A$ such that $z_k^2=0$.  Here  $m=7\times 14$ and  $\|z_k\|^2\leq C'\|h\|$ for all $k$, where $C'$ is a universal constant. However, dividing the elements $z_k$ by a large natural number ($>C'^{\frac 1 2}$) and  enlarging $m$, we can assume instead that
$\|z_k\|^2\leq \|h\|$ for all $k=1,\dots,m$. Let us assume this. Notice that now $\|[z_k,z_k^*]\|\leq \|h\|\leq 1$.
By the previous lemma applied with $\epsilon=\frac 1 2$, $m$, and $R=\frac 1 2$, we have
\begin{align*}
e^{ih} &=e^{i[z_1,z_1^*]+\cdots +i[z_m,z_m^*]}\\
&=\prod_{k=1}^{M} (e^{ix_k},e^{iy_k})\cdot \prod_{k=1}^{m} e^{i[z_k^*,z_k]}\cdot e^{ic},
\end{align*}
where $x_1,y_1,\dots, x_M,y_M\in A$ and $c\in A_\sa$ are such that $c\sim_{\Tr}0$, 
$\|c\|\leq \frac 1 2\|h\|$, and $\|x_k\|,\|y_k\|\leq \frac 1 2 \|h\|$ for all $k$. Notice that $\|e^{ix_k}-1\|, \|e^{iy_k}-1\|\leq \|e^{ih}-1\|^{\frac 1 2}$ for all $k=1,\dots,M$. It remains to show that the terms $e^{i[z_k,z_k^*]}$ are also expressible as commutators. By \cite[Lemma 2.4 (ii)]{kerdet}, for all $z\in A$ such that  $z^2=0$ and $\|z\|^2\leq \frac \pi 2$ we have $e^{i[z^*,z]}=(u,v)$ for some unitaries $u,v\in \mathrm U^0(A)$ such that $\|u-1\|, \|v-1\|\leq \|e^{[z,z^*]-1}\|^{\frac 1 2}$.
 Applying this to each $z_k$, we get   $e^{i[z_k,z_k^*]}=(u_k,v_k)$, where $u_k,v_k\in \mathrm U^0(A)$
are such that 
\[
\|u_k-1\|,\|v_k-1\|\leq\|e^{i[z_k,z_k^*]}-1\|^{\frac 1 2}\leq \|e^{ih}-1\|^{\frac 1 2}
\] 
for all $k=1,\dots,m$.
\end{proof}


\begin{lemma}\label{orthogonalseries}
Let $A$ be a pure $\Cstar$-algebra with strictly positive element $d\in A_+$. Let $\epsilon>0$. Then there exist pairwise orthogonal positive elements such that  
$a_i\sim b_i$  and 
$b_i\precsim a_{i+1}+b_{i+1}$ for all $i$, and $[(d-\epsilon)_+]\leq 11[a_1]$.
\end{lemma}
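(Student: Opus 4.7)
The plan is to mirror the construction in the proof of Lemma \ref{geomseries}, now producing two parallel chains of pairwise orthogonal elements so that each $b_i$ can be made Blackadar subequivalent to the orthogonal sum $a_{i+1}+b_{i+1}$.

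First, I will set up the Cuntz semigroup data. Using that soft elements in $\Cu(A)$ for pure $A$ are exactly divisible by every positive integer (via the $\mathcal Z$-multiplication recalled before Lemma \ref{geomseries}), I will choose Cuntz classes $[g_1],[g_2],\ldots\in \Cu(A)$ satisfying
\begin{align*}
9[g_1] &= [(d-\epsilon)_+]_s,\\
2[g_{i+1}] &= [g_i]\quad \text{for all } i\geq 1,
\end{align*}
where $[(d-\epsilon)_+]_s$ is the soft version of $[(d-\epsilon)_+]$. The factor $9$ is calibrated so that two constraints hold simultaneously: $11[g_1]$ strictly majorizes $[(d-\epsilon)_+]$ in dimension function (to invoke strict comparison), while the total dimension $4 d_\tau(g_1)$ of two copies of $\bigoplus_i g_i$ stays strictly below $\tfrac 12 d_\tau(d)$ (so that Lemma \ref{strongcomp} applies).

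Next, I will realize these classes orthogonally. In $(A\otimes \mathcal K)_+$ I pick pairwise orthogonal positive elements $\tilde a_i,\tilde b_i$ (orthogonal across $i$ and within each pair) with $[\tilde a_i]=[\tilde b_i]=[g_i]$, arranged via orthogonal isometries in $M(A\otimes \mathcal K)$ so that $\tilde a_i\sim \tilde b_i$ and $\tilde b_i\precsim \tilde a_{i+1}+\tilde b_{i+1}$ hold directly in the Blackadar sense; this is possible because $[\tilde b_i]=[g_i]=2[g_{i+1}]=[\tilde a_{i+1}+\tilde b_{i+1}]$, and in the stable algebra $A\otimes \mathcal K$ a positive element of Cuntz class $2[g_{i+1}]$ can be built as a direct sum of two Blackadar-equivalent copies of $\tilde a_{i+1}$. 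I then form
\[
f=\bigoplus_{i=1}^\infty \tfrac{1}{i}(\tilde a_i+\tilde b_i) \in (A\otimes \mathcal K)_+,
\]
whose dimension function is
\[
d_\tau(f)=\sum_{i=1}^\infty 2 d_\tau(g_i)=4 d_\tau(g_1)=\tfrac{4}{9}d_\tau((d-\epsilon)_+)\leq \tfrac{4}{9}d_\tau(d)
\]
for all $\tau\in\QT(A)$. Since $\tfrac{4}{9}<\tfrac{1}{2}$, Lemma \ref{strongcomp} yields $f\precsim d$. Let $v\in (A\otimes \mathcal K)^{**}$ be a partial isometry implementing this subequivalence and set $a_i=v\tilde a_iv^*$, $b_i=v\tilde b_iv^*$. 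As in the proof of Lemma \ref{geomseries}, these land in $\her(d)=A$ and remain pairwise orthogonal.

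For the verification, $a_i\sim b_i$ and $b_i\precsim a_{i+1}+b_{i+1}$ transfer directly by conjugation with $v$ from the corresponding relations among the $\tilde a_i,\tilde b_i$. For $[(d-\epsilon)_+]\leq 11[a_1]$ I compute $d_\tau(11 a_1)=11 d_\tau(g_1)=\tfrac{11}{9}d_\tau((d-\epsilon)_+)$; that is, $d_\tau((d-\epsilon)_+)\leq \tfrac{9}{11}d_\tau(11 a_1)$ uniformly in $\tau$, and the almost unperforation of $\Cu(A)$ (equivalently, strict comparison of positive elements by $2$-quasitraces, available for pure $A$) gives the desired Cuntz inequality.

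The main obstacle is the tight balance of constants: Lemma \ref{strongcomp} only upgrades dimension function inequalities to Blackadar subequivalence when the slack is strictly less than $1/2$, but the required relation $b_i\precsim a_{i+1}+b_{i+1}$ corresponds to the critical ratio exactly equal to $1/2$ and so cannot be obtained by a direct invocation of Lemma \ref{strongcomp}. The resolution is to build the $\tilde a_i,\tilde b_i$ inside the stable algebra $A\otimes \mathcal K$ so that the telescoping Blackadar subequivalences hold by construction using orthogonal isometries, and to apply Lemma \ref{strongcomp} only once, for the ambient containment $f\precsim d$ with the comfortable ratio $4/9$.
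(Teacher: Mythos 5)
Your overall strategy is the same as the paper's: use softness and exact divisibility of the Cuntz class to set up a dyadic chain, form an orthogonal direct sum $f$ in $A\otimes\mathcal K$ with $d_\tau(f)$ comfortably below $\tfrac 12 d_\tau(d)$, apply Lemma \ref{strongcomp} once to land the whole configuration in $\her(d)=A$, and then conjugate by the partial isometry from the polar decomposition. Your arithmetic ($9$ in place of the paper's $10$, giving ratio $4/9<1/2$ and slack $2/11$ for the almost-unperforation step) is fine.

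There is, however, a genuine gap in the step where you build the orthogonal family $\tilde a_i,\tilde b_i$ in $A\otimes\mathcal K$. You assert that the Blackadar subequivalences $\tilde b_i\precsim \tilde a_{i+1}+\tilde b_{i+1}$ "hold directly by construction using orthogonal isometries", and you justify this by the Cuntz-class identity $[\tilde b_i]=[g_i]=2[g_{i+1}]=[\tilde a_{i+1}+\tilde b_{i+1}]$. But equality of Cuntz classes does \emph{not} imply Blackadar's relation $\precsim$. The isometries do give you orthogonality and $\tilde a_i\sim\tilde b_i$ (conjugate the same representative by two isometries with orthogonal ranges); they do not give the telescoping relation $\tilde b_i\precsim \tilde a_{i+1}+\tilde b_{i+1}$, because that requires $\tilde b_i$ to be Blackadar-equivalent to a conjugate of $\tilde a_{i+1}+\tilde b_{i+1}$, not merely Cuntz-equivalent to it, and the orthogonality requirement rules out the trivial choice $\tilde b_i=\tilde a_{i+1}+\tilde b_{i+1}$. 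The only general mechanism for upgrading a Cuntz relation to a Blackadar relation is via cutdowns (if $[a]\leq[b]$ then $[(a-\epsilon)_+]\ll[b]$, and compact containment yields $(a-\epsilon)_+\precsim b$; see \cite[Proposition 4.6]{ortega-thiel-rordam}). This is exactly what the paper does: it chooses $\delta_1,\delta_2,\ldots$ so that $(c_i-\delta_i)_+\precsim(c_{i+1}-\delta_{i+1})_+\oplus(c_{i+1}-\delta_{i+1})_+$ for all $i$, and defines $a_i,b_i$ as conjugates of $(c_i-\delta_i)_+\oplus 0$ and $0\oplus(c_i-\delta_i)_+$. Your proposal omits the cutdowns entirely, and without them the chain of Blackadar relations is unjustified. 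To repair your argument you would need to replace the representatives $g_i$ with cutdowns $(g_i-\delta_i)_+$ chosen recursively so that $(g_i-\delta_i)_+\precsim(g_{i+1}-\delta_{i+1})_+\oplus(g_{i+1}-\delta_{i+1})_+$, exactly as in the paper.
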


\begin{proof}
Let $[d]_s\in \Cu(A)$ denote the ``soft" element 
associated to $[d]$. Using that this element is infinitely divisible, we can find $[c_1],[c_2],[c_3],\dots \in \Cu(A)$ such that $[c_i]=2[c_{i+1}]$ for all $i$
and $10[c_1]=[d_s]$.  Let us pick the representatives $c_i$ such that $\|c_i\|\to 0$. Since $[d]\leq 11[c_1]$, there exists $\delta_1>0$ such that $[(d-\epsilon)_+]\leq 11[(c_1-\delta_+1)]$.
Let us continue choosing $\delta_2,\delta_3,\ldots$ such that $(c_i-\delta_i)_+\precsim (c_{i+1}-\delta_{i+1})_+\oplus  (c_{i+1}-\delta_{i+1})_+$	
for all $i$. Consider the element 
\[
c=\bigoplus_{i=1}^\infty (c_i\oplus c_i)\in  (A\otimes \mathcal K)_+.
\]
Notice that $d_\tau(c)\leq \frac{2}{5}d_\tau(d)$ for all $\tau\in \QT(A)$. By Lemma \ref{strongcomp}, $c\precsim d$. Let $v\in \her(d)^{**}$ denote the partial isometry  implementing this subequivalence. Let us define 
\begin{align*}
a_i &=v((c_i-\delta_i)_+\oplus 0)v^*,\\
b_i &=v(0\oplus (c_i-\delta_i)_+)v^*,
\end{align*} 
for all $i$. The elements $a_1,b_1,a_2,b_2,\dots$ are pairwise orthogonal. They satisfy $a_i\sim b_i$ and $b_i\precsim a_{i+1}+b_{i+1}$ for all $i$ and
$[(d-\epsilon)_+]\leq 11[(c_1-\delta_1)+]=11[a_1]$, as desired. 
\end{proof}

\begin{lemma} \label{multiplicativefack}
Let $A$ be a pure $\Cstar$-algebra with $\mathrm{Prim}(A)$ compact and whose bounded 2-quasitraces are traces.
Let $a_1,b_1,a_2,b_2,\dots\in A_+$ be pairwise orthogonal positive elements as in the previous lemma. 
Let $h\in \her(a_1)$ be a selfadjoint element  such that $h\sim_{\Tr}0$. Then $e^{ih}\in \mathrm{DU}^0(A)$.  
\end{lemma}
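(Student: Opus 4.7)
The plan is to adapt the additive Fack-style iteration from Theorem \ref{fackstechnique} to the multiplicative setting. Proposition \ref{exphalvenorm} will play the role of the unitary halving, and a multiplicative version of Lemma \ref{aplusnb} will serve as the moving step between orthogonal hereditary subalgebras. Starting from $c_1 := h \in \her(a_1) \subseteq \her(a_1 + b_1)$, I will iteratively construct selfadjoint $c_n \in \her(a_n + b_n)$ with $c_n \sim_{\Tr} 0$ and $\|c_n\|$ decaying geometrically to $0$, together with halving commutators supported in $\her(a_n + b_n)$ and moving commutators supported in $\her(a_n + b_n + a_{n+1} + b_{n+1})$.

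At step $n$, since $a_n$ and $b_n$ are orthogonal and Murray--von Neumann equivalent (from Lemma \ref{orthogonalseries}), one has $\her(a_n + b_n) \cong M_2(\her(a_n))$ and Proposition \ref{exphalvenorm} applies. Iterating it $K$ times gives $e^{ic_n} = \prod_{j=1}^{N_0}(u_j^{(n)}, v_j^{(n)}) \cdot e^{i\tilde c_n}$ with $u_j^{(n)}, v_j^{(n)} \in \mathrm{U}^0(\her(a_n + b_n))$ close to $1$ and $\|\tilde c_n\| \leq \|c_n\|/2^K$. For the moving step, the Cuntz ratio $d_\tau(a_n+b_n) = 2 d_\tau(a_{n+1}+b_{n+1})$ together with Lemma \ref{strongcomp} yields $a_n + b_n \precsim (a_{n+1}+b_{n+1})^{\oplus 5}$ (since $2/5 < 1/2$), and Lemma \ref{aplusnb} then produces an additive decomposition $\tilde c_n = \sum_{j=1}^5 [z_j^{(n)}, w_j^{(n)}] + c_{n+1}$ with $c_{n+1} \in \her(a_{n+1}+b_{n+1})_\sa$ and $z_j^{(n)}, w_j^{(n)} \in \her(a_n + b_n + a_{n+1}+b_{n+1})$. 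I will convert this to a multiplicative identity
\[
e^{i\tilde c_n} = \prod_{j=1}^{M_0}(x_j^{(n)}, y_j^{(n)}) \cdot e^{ic_{n+1}}
\]
by the same technique that underlies Proposition \ref{exphalvenorm}: decomposing each $[z_j^{(n)}, w_j^{(n)}]$ into nilpotent commutators $[z^*, z]$ via Theorem \ref{commutatornil}, using $e^{i[z^*, z]} = (u, v)$ from \cite[Lemma 2.4]{kerdet}, and invoking Suzuki's formula to combine the exponentials while absorbing the resulting Zassenhaus error into an adjusted $c_{n+1}$. With $K$ chosen so that $5/2^K < 1$, the norm $\|c_n\|$ decays geometrically.

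Once the iteration is assembled, $e^{ih}$ equals the norm-convergent infinite product $\prod_n C^{(n)} D^{(n)}$ with $C^{(n)} = \prod_j (u_j^{(n)}, v_j^{(n)})$ in $\her(a_n + b_n)$ and $D^{(n)} = \prod_j (x_j^{(n)}, y_j^{(n)})$ in $\her(a_n + b_n + a_{n+1}+b_{n+1})$. I will collapse this infinite product into finitely many commutators by orthogonality: the hereditaries $\her(a_n + b_n)$ are pairwise orthogonal, so the factors of $C^{(n)}$ for distinct $n$ pairwise commute, and the quantitative smallness $\|u_j^{(n)} - 1\| \leq \|e^{ic_n} - 1\|^{1/2}$ in Proposition \ref{exphalvenorm} guarantees that $U_j := \prod_n u_j^{(n)}$ and $V_j := \prod_n v_j^{(n)}$ converge in norm to unitaries in $\mathrm{U}^0(A)$ with $\prod_n (u_j^{(n)}, v_j^{(n)}) = (U_j, V_j)$. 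The moving pieces $\her(a_n + b_n + a_{n+1}+b_{n+1})$ overlap only for consecutive $n$, so splitting into odd and even $n$ gives two families of orthogonal pieces that similarly collapse into $M_0$ commutators each. The main obstacle is the multiplicative moving step (controlling the BCH/Zassenhaus error in the additive-to-multiplicative conversion while preserving the geometric decay), together with the bookkeeping needed to rearrange the interleaved product $\prod_n C^{(n)} D^{(n)}$ into $\bigl(\prod_n C^{(n)}\bigr)\cdot\bigl(\prod_{n\text{ odd}} D^{(n)}\bigr)\cdot\bigl(\prod_{n\text{ even}} D^{(n)}\bigr)$ using the commutation relations between $C^{(m)}$ and $D^{(n)}$ for $m\notin\{n,n+1\}$ and between $D^{(n)}$ and $D^{(m)}$ for $|n-m|\geq 2$.
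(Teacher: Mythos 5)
Your overall strategy — a Fack-style multiplicative iteration with Proposition \ref{exphalvenorm} doing the ``halving'' in each $\her(a_n+b_n)\cong M_2(\her(a_n))$, followed by collapsing the resulting infinite product via orthogonality and an odd/even split — is the same as the paper's, and your final rearrangement bookkeeping matches the paper's (which imitates \cite[Proposition 6.1]{dlHarpe-Skandalis2}). The divergence, and the gap, is in the \emph{moving} step.

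You propose to take the additive decomposition $\tilde c_n=\sum_{j}[z_j^{(n)},w_j^{(n)}]+c_{n+1}$ from Lemma \ref{aplusnb}, with the $z_j^{(n)},w_j^{(n)}$ living in $\her(a_n+b_n+a_{n+1}+b_{n+1})$, and convert it to $e^{i\tilde c_n}=\prod_j(x_j^{(n)},y_j^{(n)})\,e^{ic_{n+1}}$ by expressing $\sum_j[z_j^{(n)},w_j^{(n)}]$ as a sum of nilpotent commutators and invoking Zassenhaus. There are two problems with this. First, the nilpotent-decomposition tool you cite (Theorem \ref{commutatornil}, or underlying it Lemma \ref{matricesnilpotents}(ii)) requires the ambient algebra to be unital and pure, or of the form $M_2(B)$/$M_3(B)$; the hereditary subalgebra $\her(a_n+b_n+a_{n+1}+b_{n+1})$ has neither a unit nor such a matrix structure, so there is no justification for writing $\sum_j[z_j^{(n)},w_j^{(n)}]$ as a bounded sum of $[z^*,z]$'s inside it. Second, even granting that, the Zassenhaus error produced when recombining $e^{i\sum_j[z_j,w_j]}\cdot e^{ic_{n+1}}$ into $e^{i\tilde c_n}$ lies in the $\Cstar$-algebra generated by the $z_j$, $w_j$, $c_{n+1}$, i.e.\ only in $\her(a_n+b_n+a_{n+1}+b_{n+1})$, not in $\her(a_{n+1}+b_{n+1})$. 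So your ``adjusted $c_{n+1}$'' is not supported where the next step of the iteration needs it, and the induction does not close.

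The paper avoids both problems by keeping the moving step entirely multiplicative: it first applies \cite[Proposition 5.18]{dlHarpe-Skandalis2} inside $\her(a_n+b_n)\cong M_2(\her(a_n))$ to write $e^{ih_n'}$ as two multiplicative commutators times a unitary $e^{ih_n''}$ supported exactly in $\her(b_n)$, and then applies \cite[Lemma 5.17]{dlHarpe-Skandalis2} in $\her(b_n+a_{n+1}+b_{n+1})$ (using the subequivalence $b_n\precsim a_{n+1}+b_{n+1}$) to move $e^{ih_n''}$ to $\her(a_{n+1}+b_{n+1})$ at the cost of one more commutator. These de la Harpe--Skandalis lemmas localize the remainder exactly, with no Zassenhaus error to re-absorb, and they need only subequivalence rather than a unit or matrix structure on the overlap algebra. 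To repair your argument you would need to replace your additive-to-multiplicative conversion in the moving step with something like \cite[Proposition 5.18 and Lemma 5.17]{dlHarpe-Skandalis2}.
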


\begin{proof}
The proof uses the the multiplicative version of ``Fack's technique", as applied in  \cite[Lemma 6.5]{kerdet}. 

Since $e^{ih}=(e^{ih/N})^N$ for all $N\in \N$ we can assume that $\|h\|<\delta$ for any prescribed $\delta$.
Let us choose $\delta$ such that \cite[Proposition 5.18]{dlHarpe-Skandalis2} is applicable to any unitary within a distance of at most 
$\delta$ of 1.

By Proposition \ref{exphalvenorm} applied in $\her(a_1+b_1)$, there exist unitaries
$u_i^{(1)},v_i^{(1)}$ in $\mathrm U^0(\her(a_1+b_1))$, for $i=1,\dots,N$,  such that
\[
e^{ih}=\prod_{i=1}^N (u_i^{(1)},v_i^{(1)})e^{ih_1'},
\]
where $h_1'\in\her(a_1+b_1)_{\sa}$, $h_1'\sim_{\Tr}0$, and $\|h_1\|'<\frac{\|h\|}{2^1}$. Next, 
by \cite[Proposition 5.18]{dlHarpe-Skandalis2} (see also \cite[Lemma 6.4]{kerdet}), there exist unitaries $w^{(1)}_1,x^{(1)}_1,w^{(1)}_2,x^{(1)}_2$ in $\mathrm U^0(\her(a_1+b_1))$ such that 
\[
e^{ih_1'}=(w^{(1)}_1,x^{(1)}_1)(w^{(1)}_2,x^{(1)}_2)e^{ih_1''},
\]
and $h_1''\in \her(b_2)_{\sa}$.  Finally,
 by \cite[Lemma 5.17]{dlHarpe-Skandalis2} applied in $\her(b_1+a_2+b_2)$, we have
$
e^{ih_1''}=(y^{(1)},z^{(1)})e^{ih_2}$,
with $y^{(1)},z^{(1)}\in \mathrm U^0(\her(b_1+a_2+b_2))$ and $h_2\in \her(a_2+b_2)_{\sa}$.
Next, we apply again Proposition \ref{exphalvenorm} in $\her(a_2+b_2)$:
\[
e^{ih_2}=\prod_{i=1}^N (u_i^{(2)},v_i^{(2)})e^{ih_2'},
\]
with $h_2'\sim_{\Tr}0$ and $\|h_2'\|<\frac{1}{2^2}$, followed by  applications of \cite[Proposition 5.18]{dlHarpe-Skandalis2} and \cite[Lemma 5.17]{dlHarpe-Skandalis2}:
\begin{align*}
e^{ih_2'} &=(w^{(2)}_1,x^{(2)}_1)(w^{(2)}_2,x^{(2)}_2)e^{ih_2''},\\
&=(w^{(2)}_1,x^{(2)}_1)(w^{(2)}_2,x^{(2)}_2)(y^{(2)},z^{(2)})e^{ih_3},
\end{align*}
where $h_2''\in \her(b_2)_{\sa}$ and $h_2''\sim_{\Tr}0$, and   $h_3\in \her(a_3+b_3)_{\sa}$ and $h_3\sim_{\Tr}0$. Continuing this strategy we construct, for each $n\in \N$, 
\begin{enumerate}
\item
unitaries $u_1^{(n)},v_1^{(n)},\dots,u_{N}^{(n)},v_{N}^{(n)}$ and $ w^{(n)}_1,x^{(n)}_1,w^{(n)}_2,x^{(n)}_2$ in $U^0(\her(b_n))$, 
\item
unitaries  $y^{(n)},z^{(n)}$ in $\mathrm U^0(\her(b_n+a_{n+1}+b_{n+1})$, and
\item
a selfadjoint $h_n\in \her(e_n)_{\sa}$, 
\end{enumerate}
such that $h_n\sim_{\Tr}0$ and 
\[
e^{ih}=\prod_{k=1}^{n-1}\Big(\prod_{i=1}^N(u_i^{(k)},v_i^{(k)})\Big) (w^{(k)}_1,x^{(k)}_1)(w^{(k)}_1,x^{(k)}_1)(y^{(k)},z^{(k)})\cdot e^{ih_n}.
\]
Notice that $h_n\to 0$. Thus, the above formula yields and expression of $e^{ih}$   as an infinite product of commutators.
By the pairwise orthogonality of the $a_n$s and $b_n$s, we can gather the terms of this infinite product into subsequences, each of them equal to a finite  product of commutators. This is done in the same manner as in the proof of \cite[Proposition 6.1]{dlHarpe-Skandalis2}.  First, we group together the commutators $(y^{(k)},z^{(k)})$ in the product above:
\[
e^{ih}=\prod_{k=1}^{n-1}\Big(\prod_{i=1}^N(\tilde u_i^{(k)},\tilde v_i^{(k)})\Big) (\tilde w^{(k)}_1,\tilde x^{(k)}_1)(\tilde w^{(k)}_1,\tilde x^{(k)}_1)\prod_{k=1}^{n-1}(y^{(k)},z^{(k)})\cdot e^{ih_n},
\]
where 
\begin{align*}
\tilde u_i^{(k)} &=(y^{(k-1)},z^{(k-1)})u_i^{(k)}(y^{(k-1)},z^{(k-1)})^{-1},\\
\tilde v_i^{(k)} &=(y^{(k-1)},z^{(k-1)})v_i^{(k)}(y^{(k-1)},z^{(k-1)})^{-1},\\
\tilde w^{(k)}_j &=(y^{(k-1)},z^{(k-1)})w_j^{(k)}(y^{(k-1)},z^{(k-1)})^{-1},\\
\tilde x^{(k)}_j &=(y^{(k-1)},z^{(k-1)})x_j^{(k)}(y^{(k-1)},z^{(k-1)})^{-1},
\end{align*}
for all $i=1,\dots,N$, $j=1,2$, and $k=2,\dots,n$. Since $(y^{(k)},z^{(k)})-1$ belongs to $\her(b_k)+\her(a_{k+1}+b_{k+1})$, the modified unitaries  
$\tilde u_i^{(k)},\tilde v_i^{(k)},\tilde w_j^{(k)},x_j^{(k)}$ continue to belong to $\mathrm{U}^0(\her(a_k+b_k))$. Therefore,
\begin{enumerate}
\item
$\prod_{k=1}^{\infty}\Big(\prod_{i=1}^N(\tilde u_i^{(k)},\tilde v_i^{(k)})\Big)\cdot (\tilde w^{(k)}_1,\tilde x^{(k)}_1)\cdot (\tilde w^{(k)}_2,\tilde x^{(k)}_2)$ is a product of $N+2$ commutators,
\item
$\prod_{k=1}^\infty (y^{(2k-1)}),z^{(2k-1)})$ is a single commutator,

\item
$\prod_{k=1}^\infty (y^{(2k)}),z^{(2k)})$ is a single commutator.
\end{enumerate}
We thus arrive at an expression of $e^{ih}$ as a product of $N+4$ commutators.
\end{proof}

\begin{proof}[Proof of Theorem \ref{determinant}]
	It is clear that every unitary in $\mathrm{DU}^0(A)$ is in $\ker \Delta_{\Tr}$.
To prove the converse, it suffices, by Lemma \ref{reducetoexp},  to show that $e^{ih}$, with $h\sim_{\Tr}0$ is a finite product of commutators. Writing $e^{ih}=(e^{ih/N})^N$, 
  we can assume that $\|h\|<\delta$ for any prescribed $\delta$.
We will specify how small should $\delta$ be soon. 
By \cite[Proposition 5.18]{dlHarpe-Skandalis2}, $e^{ih}$ is a product of commutators times $e^{ih'}$, with $h'\in \her(e_{1,1})$ and $h'\sim_{\Tr}0$. Let us choose $d\in \her(e_{2,2}+e_{3,3})$ and $\epsilon>0$ such that $N[d]\leq [e_{2,2}+e_{3,3}]\leq (N+1)[(d-2\epsilon)_+]$, for $N$ large enough (how large to be specified soon).
Let us find pairwise orthogonal elements $d_1,\dots d_N\her(e_{2,2}+e_{3,3})$
such that $d_i\sim (d-\epsilon)_+$ for all $i$.
Since $[e_{1,1}]\leq N[(d-2\epsilon)_+]$, 
we have  $e_{1,1}\precsim (\sum_i d_i-\epsilon)_+$. By repeated application of \cite{dlHarpe-Skandalis2}, we can express $e^{ih'}$
 as a finite product of commutators times $e^{ih''}$, with $h''\in \her((d_1-\epsilon)_+)$ selfadjoint such that $h''\sim_{\Tr}0$ (in $\her((d_1-\epsilon)_+)$).
Let us choose a sequence $a_1,b_1,a_2,b_2\ldots\in \her(d_2+\cdots+d_{13})$ as in Lemma \ref{orthogonalseries}. Then $12[d_1]=[d_2+\cdots+d_{13}]\leq 11[a_1]$.
Thus, $(d_{1}-\epsilon)_+\precsim a_1$. We can therefore express $e^{ih''}$	as a commutator times $e^{ih'''}$, with $h'''\in \her(a_1)$ and $h'''\sim_{\Tr}0$.
	By Lemma \ref{multiplicativefack}, $e^{ih'''}$ is a finite product of commutators.
	  \end{proof}

\begin{bibdiv}
\begin{biblist}

\bib{ara-perera-toms}{article}{
   author={Ara, Pere},
   author={Perera, Francesc},
   author={Toms, Andrew S.},
   title={$K$-theory for operator algebras. Classification of $C\sp
   *$-algebras},
   conference={
      title={Aspects of operator algebras and applications},
   },
   book={
      series={Contemp. Math.},
      volume={534},
      publisher={Amer. Math. Soc., Providence, RI},
   },
   date={2011},
   pages={1--71},
}

\bib{antoine-perera-thiel}{article}{
author={Antoine, Ramon},
author={Perera, Francesc},
author={Thiel, Hannes},
title={Tensor products and regularity properties of
Cuntz semigroups},
eprint={http://arxiv.org/abs/1410.0483}
date={2014},
}

\bib{avitzour}{article}{
   author={Avitzour, Daniel},
   title={Free products of $C\sp{\ast} $-algebras},
   journal={Trans. Amer. Math. Soc.},
   volume={271},
   date={1982},
   number={2},
   pages={423--435},
}

\bib{radius}{article}{
   author={Blackadar, Bruce},
   author={Robert, Leonel},
   author={Tikuisis, Aaron P.},
   author={Toms, Andrew S.},
   author={Winter, Wilhelm},
   title={An algebraic approach to the radius of comparison},
   journal={Trans. Amer. Math. Soc.},
   volume={364},
   date={2012},
   number={7},
   pages={3657--3674},
}

\bib{blanchard-kirchberg}{article}{
   author={Blanchard, Etienne},
   author={Kirchberg, Eberhard},
   title={Non-simple purely infinite $C\sp *$-algebras: the Hausdorff case},
   journal={J. Funct. Anal.},
   volume={207},
   date={2004},
   number={2},
   pages={461--513},
}

\bib{etal}{article}{
author={Brown, Nathaniel P.},
author={Bosa, Joan},
author={Sato, Yasuhiko},
author={Tikuisis, Aaron},
author={White, Stuart},
author={Winter, Wilhelm},
title={Covering dimension of C*-algebras and 2-coloured classification}
date={2015}
status={in preparation}
}

\bib{kirchberg-rordam}{article}{
   author={Kirchberg, Eberhard},
   author={R{\o}rdam, Mikael},
   title={Central sequence $C^*$-algebras and tensorial absorption of the
   Jiang-Su algebra},
   journal={J. Reine Angew. Math.},
   volume={695},
   date={2014},
   pages={175--214},
}

\bib{cuntz-pedersen}{article}{
	author={Cuntz, Joachim},
	author={Pedersen, Gert Kjaerg{\.a}rd},
	title={Equivalence and traces on $C\sp{\ast} $-algebras},
	journal={J. Funct. Anal.},
	volume={33},
	date={1979},
	number={2},
	pages={135--164},
}

\bib{ers}{article}{
   author={Elliott, George A.},
   author={Robert, Leonel},
   author={Santiago, Luis},
   title={The cone of lower semicontinuous traces on a $C\sp *$-algebra},
   journal={Amer. J. Math.},
   volume={133},
   date={2011},
   number={4},
   pages={969--1005},
}

\bib{fack}{article}{
   author={Fack, Thierry},
   title={Finite sums of commutators in $C\sp{\ast} $-algebras},
   language={English, with French summary},
   journal={Ann. Inst. Fourier (Grenoble)},
   volume={32},
   date={1982},
   number={1},
   pages={vii, 129--137},
}

\bib{dlHarpe-Skandalis1}{article}{
   author={de la Harpe, P.},
   author={Skandalis, G.},
   title={D\'eterminant associ\'e \`a une trace sur une alg\'ebre de Banach},
   language={French, with English summary},
   journal={Ann. Inst. Fourier (Grenoble)},
   volume={34},
   date={1984},
   number={1},
   pages={241--260},
   issn={0373-0956},
}

\bib{dlHarpe-Skandalis2}{article}{
   author={de la Harpe, P.},
   author={Skandalis, G.},
   title={Produits finis de commutateurs dans les $C\sp \ast$-alg\`ebres},
   language={French, with English summary},
   journal={Ann. Inst. Fourier (Grenoble)},
   volume={34},
   date={1984},
   number={4},
   pages={169--202},
   issn={0373-0956},
}

\bib{herrero}{book}{
	author={Herrero, Domingo A.},
	title={Approximation of Hilbert space operators. Vol. 1},
	series={Pitman Research Notes in Mathematics Series},
	volume={224},
	edition={2},
	publisher={Longman Scientific \& Technical, Harlow; copublished in the
		United States with John Wiley \& Sons, Inc., New York},
	date={1989},
	pages={xii+332},
	isbn={0-582-06264-0},
}

\bib{marcoux02}{article}{
   author={Marcoux, L. W.},
   title={On the linear span of the projections in certain simple $C\sp
   *$-algebras},
   journal={Indiana Univ. Math. J.},
   volume={51},
   date={2002},
   number={3},
   pages={753--771},
}

\bib{marcoux06}{article}{
   author={Marcoux, L. W.},
   title={Sums of small number of commutators},
   journal={J. Operator Theory},
   volume={56},
   date={2006},
   number={1},
   pages={111--142},
}

\bib{marcoux09}{article}{
   author={Marcoux, L. W.},
   title={Projections, commutators and Lie ideals in $C\sp *$-algebras},
   journal={Math. Proc. R. Ir. Acad.},
   volume={110A},
   date={2010},
   number={1},
   pages={31--55},
}

\bib{ng1}{article}{
   author={Ng, Ping Wong},
   title={Commutators in the Jiang-Su algebra},
   journal={Internat. J. Math.},
   volume={23},
   date={2012},
   number={11},
   pages={1250113, 29},
}

\bib{ng2}{article}{
   author={Ng, Ping  Wong},
   title={Commutators in $C\sb r\sp *(\Bbb F\sb \infty)$},
   journal={Houston J. Math.},
   volume={40},
   date={2014},
   number={2},
   pages={421--446},
}

\bib{kerdet}{article}{
author={Ng, Ping Wong},
author={Robert, Leonel},
title={The kernel of the determinant map on pure $\Cstar$-algebras},
eprint={http://arxiv.org/abs/1408.4359},
date={2014},
}

\bib{ortega-thiel-rordam}{article}{
   author={Ortega, Eduard},
   author={R{\o}rdam, Mikael},
   author={Thiel, Hannes},
   title={The Cuntz semigroup and comparison of open projections},
   journal={J. Funct. Anal.},
   volume={260},
   date={2011},
   number={12},
   pages={3474--3493},
}

\bib{ozawa}{article}{
	author={Ozawa, Narutaka},
	title={Dixmier approximation and symmetric amenability for $\rm C\sp
		*$-algebras},
	journal={J. Math. Sci. Univ. Tokyo},
	volume={20},
	date={2013},
	number={3},
	pages={349--374},
}

\bib{pedersen-petersen}{article}{
   author={Pedersen, Gert Kjaerg{\.a}rd},
   author={Petersen, Nils Holger},
   title={Ideals in a $C\sp{\ast} $-algebra},
   journal={Math. Scand.},
   volume={27},
   date={1970},
   pages={193--204 (1971)},
}

\bib{pop}{article}{
   author={Pop, Ciprian},
   title={Finite sums of commutators},
   journal={Proc. Amer. Math. Soc.},
   volume={130},
   date={2002},
   number={10},
   pages={3039--3041 (electronic)},
}

\bib{nccw}{article}{
   author={Robert, Leonel},
   title={Classification of inductive limits of 1-dimensional NCCW
   complexes},
   journal={Adv. Math.},
   volume={231},
   date={2012},
   number={5},
   pages={2802--2836},
}

\bib{riesz}{article}{
	author={Robert, Leonel},
	title={The cone of functionals on the Cuntz semigroup},
	journal={Math. Scand.},
	volume={113},
	date={2013},
	number={2},
	pages={161--186},
}

\bib{robert-commutators}{article}{
author= {Robert, Leonel},
title={Nuclear dimension and sums of commutators},
 journal={Indiana Univ. Math. J. (to appear)},
date={2015},
eprint={http://arxiv.org/abs/1309.0498},
}

\bib{robert-tikuisis}{article}{
author={Robert, Leonel},
author={Tikuisis, Aaron},
title={Nuclear dimension and Z-stability of non-simple $\Cstar$-algebras},
eprint={http://arxiv.org/abs/1308.2941}
date={2014}}

\bib{rordamZ}{article}
	{
		author={R{\o}rdam, Mikael},
		title={The stable and the real rank of $\scr Z$-absorbing $C\sp
			*$-algebras},
		journal={Internat. J. Math.},
		volume={15},
		date={2004},
		number={10},
		pages={1065--1084},
	}

\bib{rordam-winter}{article}{
   author={R{\o}rdam, Mikael},
   author={Winter, Wilhelm},
   title={The Jiang-Su algebra revisited},
   journal={J. Reine Angew. Math.},
   volume={642},
   date={2010},
   pages={129--155},
}

\bib{suzuki}{article}{
   author={Suzuki, Masuo},
   title={On the convergence of exponential operators---the Zassenhaus
   formula, BCH formula and systematic approximants},
   journal={Comm. Math. Phys.},
   volume={57},
   date={1977},
   number={3},
   pages={193--200},
}

\bib{thomsen}{article}{
	author={Thomsen, Klaus},
	title={Traces, unitary characters and crossed products by ${\bf Z}$},
	journal={Publ. Res. Inst. Math. Sci.},
	volume={31},
	date={1995},
	number={6},
	pages={1011--1029},
}

\bib{toms}{article}{
author={Toms, Andrew S.},
author={White, Stuart},
author={Winter, Wilhelm},
title={Z-stability and finite dimensional tracial boundaries},
eprint={http://arxiv.org/abs/1209.3292},
date={2012}
}

\bib{winter}{article}{
   author={Winter, Wilhelm},
   title={Nuclear dimension and $\scr{Z}$-stability of pure $\rm C^*$-algebras},
   journal={Invent. Math.},
   volume={187},
   date={2012},
   number={2},
   pages={259--342},
}
\end{biblist}
\end{bibdiv}

\end{document}